\documentclass[11pt,a4paper]{article}
\usepackage{indentfirst}
\usepackage{amsfonts}
\usepackage{amssymb}
\usepackage{mathrsfs}
\usepackage{amsmath}
\usepackage{amsthm}
\usepackage{enumerate}
\usepackage{cite}
\usepackage{mathrsfs}
\allowdisplaybreaks
\usepackage{geometry}
\usepackage{ifpdf}
\ifpdf
\usepackage[colorlinks=true,linkcolor=blue,citecolor=red, final,backref=page,hyperindex]{hyperref}
\else
\usepackage[colorlinks,final,backref=page,hyperindex,hypertex]{hyperref}
\fi

\usepackage{dsfont}
\usepackage{indentfirst}
\usepackage{latexsym}
\usepackage[all]{xy}
%-----------------------------------------------------------------------------------------

%%Layout of the paper
\hoffset=0cm
\oddsidemargin=0pt
\marginparsep=0cm
\marginparwidth=0cm
\voffset=-1.5cm

\textheight=23cm
\textwidth=16cm

\newcommand {\emptycomment}[1]{} %to remove paragraphs

\baselineskip=14pt
\newcommand{\nc}{\newcommand}
\newcommand{\delete}[1]{}
\nc{\mfootnote}[1]{\footnote{#1}} % Use this to show footnotes
\nc{\todo}[1]{\tred{To do:} #1}

\delete{
\nc{\mlabel}[1]{\label{#1}}  % Use this to suppress names
\nc{\mcite}[1]{\cite{#1}}  % Use this to suppress names
\nc{\mref}[1]{\ref{#1}}  % Use this to suppress names
\nc{\meqref}[1]{\ref{#1}} % Use this to suppress names
\nc{\mbibitem}[1]{\bibitem{#1}} % Use this to show number
}

%\delete{
\nc{\mlabel}[1]{\label{#1}  % Use the next two lines to show names
{\hfill \hspace{1cm}{\bf{{\ }\hfill(#1)}}}}
\nc{\mcite}[1]{\cite{#1}{{\bf{{\ }(#1)}}}}  % Use this lines to show names
\nc{\mref}[1]{\ref{#1}{{\bf{{\ }(#1)}}}}  % Use this lines to show names
\nc{\meqref}[1]{\eqref{#1}{{\bf{{\ }(#1)}}}} % Use this lines to show names
\nc{\mbibitem}[1]{\bibitem[\bf #1]{#1}} % Use this to show name
%}

%%%%%%%%%%%%%%%%%%%%%%%% Statements
\newtheorem{thm}{Theorem}[section]
\newtheorem{lem}[thm]{Lemma}
\newtheorem{cor}[thm]{Corollary}
\newtheorem{pro}[thm]{Proposition}
\theoremstyle{definition}
\newtheorem{defi}[thm]{Definition}
\newtheorem{ex}[thm]{Example}
\newtheorem{rmk}[thm]{Remark}
\newtheorem{pdef}[thm]{Definition-Proposition}

\nc{\tred}[1]{\textcolor{red}{#1}}
\nc{\tblue}[1]{\textcolor{blue}{#1}}
\nc{\tgreen}[1]{\textcolor{green}{#1}}
\nc{\tpurple}[1]{\textcolor{purple}{#1}}
\nc{\btred}[1]{\textcolor{red}{\bf #1}}
\nc{\btblue}[1]{\textcolor{blue}{\bf #1}}
\nc{\btgreen}[1]{\textcolor{green}{\bf #1}}
\nc{\btpurple}[1]{\textcolor{purple}{\bf #1}}

\nc{\jt}[1]{\textcolor{yellow}{JT:#1}}
\nc{\cm}[1]{\textcolor{red}{CM:#1}}
\nc{\liu}[1]{\textcolor{blue}{Liu:#1}}
\nc{\jf}[1]{\textcolor{blue}{#1}}

%%%%%%%%%%%%%% Matrix symbols.

\nc{\twovec}[2]{\left(\begin{array}{c} #1 \\ #2\end{array} \right )}
\nc{\threevec}[3]{\left(\begin{array}{c} #1 \\ #2 \\ #3 \end{array}\right )}
\nc{\twomatrix}[4]{\left(\begin{array}{cc} #1 & #2\\ #3 & #4 \end{array} \right)}
\nc{\threematrix}[9]{{\left(\begin{matrix} #1 & #2 & #3\\ #4 & #5 & #6 \\ #7 & #8 & #9 \end{matrix} \right)}}
\nc{\twodet}[4]{\left|\begin{array}{cc} #1 & #2\\ #3 & #4 \end{array} \right|}

\nc{\rk}{\mathrm{r}}

\newcommand{\frkl}{\rho}
\newcommand{\frkr}{\mu}

%%%%%%%%%%%%%%%%%%%%%%% symbols

\nc{\tforall}{\text{ for all }}

\nc{\svec}[2]{{\tiny\left(\begin{matrix}#1\\
#2\end{matrix}\right)\,}}  % column vector
\nc{\ssvec}[2]{{\tiny\left(\begin{matrix}#1\\
#2\end{matrix}\right)\,}} % subscript column vector

\nc{\typeI}{local cocycle $3$-Lie bialgebra\xspace}
\nc{\typeIs}{local cocycle $3$-Lie bialgebras\xspace}
\nc{\typeII}{double construction $3$-Lie bialgebra\xspace}
\nc{\typeIIs}{double construction $3$-Lie bialgebras\xspace}

\nc{\bia}{{$\mathcal{P}$-bimodule ${\bf k}$-algebra}\xspace}
\nc{\bias}{{$\mathcal{P}$-bimodule ${\bf k}$-algebras}\xspace}

\nc{\rmi}{{\mathrm{I}}}
\nc{\rmii}{{\mathrm{II}}}
\nc{\rmiii}{{\mathrm{III}}}
\nc{\pr}{{\mathrm{pr}}}

\nc{\OT}{constant $\theta$-}
\nc{\T}{$\theta$-}
\nc{\IT}{inverse $\theta$-}

\nc{\pll}{\beta}
\nc{\plc}{\epsilon}

\nc{\ass}{{\mathit{Ass}}}
\nc{\lie}{{\mathit{Lie}}}
\nc{\comm}{{\mathit{Comm}}}
\nc{\dend}{{\mathit{Dend}}}
\nc{\zinb}{{\mathit{Zinb}}}
\nc{\tdend}{{\mathit{TDend}}}
\nc{\prelie}{{\mathit{preLie}}}
\nc{\postlie}{{\mathit{PostLie}}}
\nc{\quado}{{\mathit{Quad}}}
\nc{\octo}{{\mathit{Octo}}}
\nc{\ldend}{{\mathit{ldend}}}
\nc{\lquad}{{\mathit{LQuad}}}

 \nc{\adec}{\check{;}} \nc{\aop}{\alpha}
\nc{\dftimes}{\widetilde{\otimes}} \nc{\dfl}{\succ} \nc{\dfr}{\prec}
\nc{\dfc}{\circ} \nc{\dfb}{\bullet} \nc{\dft}{\star}
\nc{\dfcf}{{\mathbf k}} \nc{\apr}{\ast} \nc{\spr}{\cdot}
\nc{\twopr}{\circ} \nc{\tspr}{\star} \nc{\sempr}{\ast}
\nc{\disp}[1]{\displaystyle{#1}}
\nc{\bin}[2]{ (_{\stackrel{\scs{#1}}{\scs{#2}}})}  %binomial coeff
\nc{\binc}[2]{ \left (\!\! \begin{array}{c} \scs{#1}\\
    \scs{#2} \end{array}\!\! \right )}  %binomial coeff
\nc{\bincc}[2]{  \left ( {\scs{#1} \atop
    \vspace{-.5cm}\scs{#2}} \right )}  %binomial coeff
\nc{\sarray}[2]{\begin{array}{c}#1 \vspace{.1cm}\\ \hline
    \vspace{-.35cm} \\ #2 \end{array}}
\nc{\bs}{\bar{S}} \nc{\dcup}{\stackrel{\bullet}{\cup}}
\nc{\dbigcup}{\stackrel{\bullet}{\bigcup}} \nc{\etree}{\big |}
\nc{\la}{\longrightarrow} \nc{\fe}{\'{e}} \nc{\rar}{\rightarrow}
\nc{\dar}{\downarrow} \nc{\dap}[1]{\downarrow
\rlap{$\scriptstyle{#1}$}} \nc{\uap}[1]{\uparrow
\rlap{$\scriptstyle{#1}$}} \nc{\defeq}{\stackrel{\rm def}{=}}
\nc{\dis}[1]{\displaystyle{#1}} \nc{\dotcup}{\,
\displaystyle{\bigcup^\bullet}\ } \nc{\sdotcup}{\tiny{
\displaystyle{\bigcup^\bullet}\ }} \nc{\hcm}{\ \hat{,}\ }
\nc{\hcirc}{\hat{\circ}} \nc{\hts}{\hat{\shpr}}
\nc{\lts}{\stackrel{\leftarrow}{\shpr}}
\nc{\rts}{\stackrel{\rightarrow}{\shpr}} \nc{\lleft}{[}
\nc{\lright}{]} \nc{\uni}[1]{\tilde{#1}} \nc{\wor}[1]{\check{#1}}
\nc{\free}[1]{\bar{#1}} \nc{\den}[1]{\check{#1}} \nc{\lrpa}{\wr}
\nc{\curlyl}{\left \{ \begin{array}{c} {} \\ {} \end{array}
    \right .  \!\!\!\!\!\!\!}
\nc{\curlyr}{ \!\!\!\!\!\!\!
    \left . \begin{array}{c} {} \\ {} \end{array}
    \right \} }
\nc{\leaf}{\ell}       % number of leafs
\nc{\longmid}{\left | \begin{array}{c} {} \\ {} \end{array}
    \right . \!\!\!\!\!\!\!}
\nc{\ot}{\otimes} \nc{\sot}{{\scriptstyle{\ot}}}
\nc{\otm}{\overline{\ot}}
\nc{\ora}[1]{\stackrel{#1}{\rar}}
\nc{\ola}[1]{\stackrel{#1}{\la}}%${\Bbb Z}$
\nc{\pltree}{\calt^\pl}
\nc{\epltree}{\calt^{\pl,\NC}}
\nc{\rbpltree}{\calt^r}
\nc{\scs}[1]{\scriptstyle{#1}} \nc{\mrm}[1]{{\rm #1}}
\nc{\dirlim}{\displaystyle{\lim_{\longrightarrow}}\,}
\nc{\invlim}{\displaystyle{\lim_{\longleftarrow}}\,}
\nc{\mvp}{\vspace{0.5cm}} \nc{\svp}{\vspace{2cm}}
\nc{\vp}{\vspace{8cm}} \nc{\proofbegin}{\noindent{\bf Proof: }}
%\nc{\proofbegin}{\begin{proof}} % AMS command
\nc{\proofend}{$\blacksquare$ \vspace{0.5cm}}
%\nc{\proofend}{\end{proof}} %AMS command
\nc{\freerbpl}{{F^{\mathrm RBPL}}}
\nc{\sha}{{\mbox{\cyr X}}}  %used to be \cyr
\nc{\ncsha}{{\mbox{\cyr X}^{\mathrm NC}}} \nc{\ncshao}{{\mbox{\cyr
X}^{\mathrm NC,\,0}}}
\nc{\shpr}{\diamond}    %Shuffle product
\nc{\shprm}{\overline{\diamond}}    %Shuffle product
\nc{\shpro}{\diamond^0}    %Shuffle product
\nc{\shprr}{\diamond^r}     %product on controlled trees
\nc{\shpra}{\overline{\diamond}^r}
\nc{\shpru}{\check{\diamond}} \nc{\catpr}{\diamond_l}
\nc{\rcatpr}{\diamond_r} \nc{\lapr}{\diamond_a}
\nc{\sqcupm}{\ot}
\nc{\lepr}{\diamond_e} \nc{\vep}{\varepsilon} \nc{\labs}{\mid\!}
\nc{\rabs}{\!\mid} \nc{\hsha}{\widehat{\sha}}
\nc{\lsha}{\stackrel{\leftarrow}{\sha}}
\nc{\rsha}{\stackrel{\rightarrow}{\sha}} \nc{\lc}{\lfloor}
\nc{\rc}{\rfloor}
\nc{\tpr}{\sqcup}
\nc{\nctpr}{\vee}
\nc{\plpr}{\star}
\nc{\rbplpr}{\bar{\plpr}}
\nc{\sqmon}[1]{\langle #1\rangle}
\nc{\forest}{\calf}
\nc{\altx}{\Lambda_X} \nc{\vecT}{\vec{T}} \nc{\onetree}{\bullet}
\nc{\Ao}{\check{A}}
\nc{\seta}{\underline{\Ao}}
\nc{\deltaa}{\overline{\delta}}
\nc{\trho}{\tilde{\rho}}

\nc{\rpr}{\circ}
%\nc{\apr}{\cdot}
\nc{\dpr}{{\tiny\diamond}}
\nc{\rprpm}{{\rpr}}

%%%%%%%%%%%%%%%%%%%%% roman fonts, in alphabetic order
\nc{\mmbox}[1]{\mbox{\ #1\ }} \nc{\ann}{\mrm{ann}}
\nc{\Aut}{\mrm{Aut}} \nc{\can}{\mrm{can}}
\nc{\twoalg}{{two-sided algebra}\xspace}
\nc{\colim}{\mrm{colim}}
\nc{\Cont}{\mrm{Cont}} \nc{\rchar}{\mrm{char}}
\nc{\cok}{\mrm{coker}} \nc{\dtf}{{R-{\rm tf}}} \nc{\dtor}{{R-{\rm
tor}}}

\nc{\depth}{{\mrm d}}
\nc{\Div}{{\mrm Div}} \nc{\End}{\mrm{End}} \nc{\Ext}{\mrm{Ext}}
\nc{\Fil}{\mrm{Fil}} \nc{\Frob}{\mrm{Frob}} \nc{\Gal}{\mrm{Gal}}
\nc{\GL}{\mrm{GL}} \nc{\Hom}{\mrm{Hom}} \nc{\hsr}{\mrm{H}}
\nc{\hpol}{\mrm{HP}} \nc{\id}{\mrm{id}} \nc{\im}{\mrm{im}}
\nc{\incl}{\mrm{incl}} \nc{\length}{\mrm{length}}
\nc{\LR}{\mrm{LR}} \nc{\mchar}{\rm char} \nc{\NC}{\mrm{NC}}
\nc{\mpart}{\mrm{part}} \nc{\pl}{\mrm{PL}}
\nc{\ql}{{\QQ_\ell}} \nc{\qp}{{\QQ_p}}
\nc{\rank}{\mrm{rank}} \nc{\rba}{\rm{RBA }} \nc{\rbas}{\rm{RBAs }}
\nc{\rbpl}{\mrm{RBPL}}
\nc{\rbw}{\rm{RBW }} \nc{\rbws}{\rm{RBWs }} \nc{\rcot}{\mrm{cot}}
\nc{\rest}{\rm{controlled}\xspace}
\nc{\rdef}{\mrm{def}} \nc{\rdiv}{{\rm div}} \nc{\rtf}{{\rm tf}}
\nc{\rtor}{{\rm tor}} \nc{\res}{\mrm{res}} \nc{\SL}{\mrm{SL}}
\nc{\Spec}{\mrm{Spec}} \nc{\tor}{\mrm{tor}} \nc{\Tr}{\mrm{Tr}}
\nc{\mtr}{\mrm{sk}}

%%%%%%%%%%%%%%%%%% bold face
\nc{\ab}{\mathbf{Ab}} \nc{\Alg}{\mathbf{Alg}}
\nc{\Algo}{\mathbf{Alg}^0} \nc{\Bax}{\mathbf{Bax}}
\nc{\Baxo}{\mathbf{Bax}^0} \nc{\RB}{\mathbf{RB}}
\nc{\RBo}{\mathbf{RB}^0} \nc{\BRB}{\mathbf{RB}}
\nc{\Dend}{\mathbf{DD}} \nc{\bfk}{{\bf k}} \nc{\bfone}{{\bf 1}}
\nc{\base}[1]{{a_{#1}}} \nc{\detail}{\marginpar{\bf More detail}
    \noindent{\bf Need more detail!}
    \svp}
\nc{\Diff}{\mathbf{Diff}} \nc{\gap}{\marginpar{\bf
Incomplete}\noindent{\bf Incomplete!!}
    \svp}
\nc{\FMod}{\mathbf{FMod}} \nc{\mset}{\mathbf{MSet}}
\nc{\rb}{\mathrm{RB}} \nc{\Int}{\mathbf{Int}}
\nc{\Mon}{\mathbf{Mon}}
%\nc{\remark}{\noindent{\bf Remark: }}
\nc{\remarks}{\noindent{\bf Remarks: }}
\nc{\OS}{\mathbf{OS}} %free operated semigroup
\nc{\Rep}{\mathbf{Rep}}
\nc{\Rings}{\mathbf{Rings}} \nc{\Sets}{\mathbf{Sets}}
\nc{\DT}{\mathbf{DT}}

%%%%%%%%%%%%%%%%%%%Bbb fonts
\nc{\BA}{{\mathbb A}} \nc{\CC}{{\mathbb C}} \nc{\DD}{{\mathbb D}}
\nc{\EE}{{\mathbb E}} \nc{\FF}{{\mathbb F}} \nc{\GG}{{\mathbb G}}
\nc{\HH}{{\mathbb H}} \nc{\LL}{{\mathbb L}} \nc{\NN}{{\mathbb N}}
\nc{\QQ}{{\mathbb Q}} \nc{\RR}{{\mathbb R}} \nc{\BS}{{\mathbb{S}}} \nc{\TT}{{\mathbb T}}
\nc{\VV}{{\mathbb V}} \nc{\ZZ}{{\mathbb Z}}

%%%%%%%%%%%%%%%%%%% cal fonts

\nc{\calao}{{\mathcal A}} \nc{\cala}{{\mathcal A}}
\nc{\calc}{{\mathcal C}} \nc{\cald}{{\mathcal D}}
\nc{\cale}{{\mathcal E}} \nc{\calf}{{\mathcal F}}
\nc{\calfr}{{{\mathcal F}^{\,r}}} \nc{\calfo}{{\mathcal F}^0}
\nc{\calfro}{{\mathcal F}^{\,r,0}} \nc{\oF}{\overline{F}}
\nc{\calg}{{\mathcal G}} \nc{\calh}{{\mathcal H}}
\nc{\cali}{{\mathcal I}} \nc{\calj}{{\mathcal J}}
\nc{\call}{{\mathcal L}} \nc{\calm}{{\mathcal M}}
\nc{\caln}{{\mathcal N}} \nc{\calo}{{\mathcal O}}
\nc{\calp}{{\mathcal P}} \nc{\calq}{{\mathcal Q}} \nc{\calr}{{\mathcal R}}
\nc{\calt}{{\mathcal T}} \nc{\caltr}{{\mathcal T}^{\,r}}
\nc{\calu}{{\mathcal U}} \nc{\calv}{{\mathcal V}}
\nc{\calw}{{\mathcal W}} \nc{\calx}{{\mathcal X}}
\nc{\CA}{\mathcal{A}}

%%%%%%%%%%%%%%%%%%  frak fonts
\nc{\fraka}{{\mathfrak a}} \nc{\frakB}{{\mathfrak B}}
\nc{\frakb}{{\mathfrak b}} \nc{\frakd}{{\mathfrak d}}
\nc{\oD}{\overline{D}}
\nc{\frakF}{{\mathfrak F}} \nc{\frakg}{{\mathfrak g}}
\nc{\frakm}{{\mathfrak m}} \nc{\frakM}{{\mathfrak M}}
\nc{\frakMo}{{\mathfrak M}^0} \nc{\frakp}{{\mathfrak p}}
\nc{\frakS}{{\mathfrak S}} \nc{\frakSo}{{\mathfrak S}^0}
\nc{\fraks}{{\mathfrak s}} \nc{\os}{\overline{\fraks}}
\nc{\frakT}{{\mathfrak T}}
\nc{\oT}{\overline{T}}
%\nc{\frakx}{{\mathfrak x}}
\nc{\frakX}{{\mathfrak X}} \nc{\frakXo}{{\mathfrak X}^0}
\nc{\frakx}{{\mathbf x}}
%\nc{\frakTxo}{{\frakTx}^0}
\nc{\frakTx}{\frakT}      %All rooted trees, correspond to \ncsha(X)
\nc{\frakTa}{\frakT^a}        % rooted trees for \ncsha(A)
\nc{\frakTxo}{\frakTx^0}   % rooted trees for \ncshao(X)
\nc{\caltao}{\calt^{a,0}}   % rooted trees for \ncshao(A)
\nc{\ox}{\overline{\frakx}} \nc{\fraky}{{\mathfrak y}}
\nc{\frakz}{{\mathfrak z}} \nc{\oX}{\overline{X}}

\font\cyr=wncyr10

\nc{\al}{\alpha}
\nc{\lam}{\lambda}
\nc{\lr}{\longrightarrow}

\def\c{\star}
\def\la{\langle}

\allowdisplaybreaks
\begin{document}

%%%%%%%%%%%%%%%%%%%%%%%%%%%%%%%%%%%%%%%%%%%%%%%%%%%%%%%%%%%%%%%%%%
\title{ \sf Bialgebra theory and $\mathcal O$-operators of admissible Hom-Poisson algebras}
\author{Karima Benali}
\author{\bf  Karima Benali\footnote{karimabenali172@yahoo.fr}\\\\
{ University of Sfax, Faculty of Sciences,  BP
1171, 3038 Sfax, Tunisia. }}

\date{}	
	
	\maketitle

\begin{abstract}
In this paper, we present and explore several key concepts within the framework of Hom-Poisson algebras. Specifically, we introduce the notions of admissible Hom-Poisson algebras, along with the related ideas of matched pairs and Manin triples for such algebras. We then define the concept of a purely admissible Hom-Poisson bialgebra, placing particular emphasis on its compatibility with the Manin triple structure associated with a nondegenerate symmetric bilinear form. This compatibility is crucial for understanding the structural interplay between these algebraic objects.
Additionally, we investigate the notion of Hom-$
\mathcal O$-operators acting on admissible Hom-Poisson algebras. We analyze their properties and establish a connection with admissible Hom-pre-Poisson algebras, shedding light on the relationship between these two structures.

\end{abstract}

{\bf Keywords}:{ Hom-Poisson algebra, addmissible Hom-Poisson algebra,  representation, addmissible Hom-Poisson bialgebra, Hom-$\mathcal{O}$-operator.}

{\bf MSC (2020)}: 17B61, 16T10, 16T25, 17B63, 	17B62.

\tableofcontents

\numberwithin{equation}{section}

\section{Introduction} Hom-algebra structure is a topic that has been extensively studied in recent years.
Hom-Lie algebras were introduced by Hartwig, Larsson and Silvestrov when studying deformations of the Witt and Viraosoro algebra \cite{HLS,MS}. They are caracterized by an $\alpha$-twisted Jacobi identity, where $\alpha$ is a linear map, called a Hom-Jacobi identity. Lie algebras are a subclass of Hom-Lie algebra when $\alpha$ is the identity map. Some q-deformations of the Witt and the Virasoro algebras have the structure of
a Hom-Lie algebra \cite{HLS} thanks to their close relationship with discrete and deformed vector fields and
differential calculus \cite{HLS,LS1, LS2}.  Hom-Lie algebras are studied in various contexts; representation
and cohomology theory \cite{SM,MS,DY} deformation theory \cite{MS}, categorification theory \cite{SZ} bialgebra theory \cite{BY,CW,CS,MSA}. See also \cite{MS,SZ} for other sudies done on Hom-type algebras.

The notion of Lie bialgebra dates back to the early $80$'s, it was introduced by V.G.Drinfeld \cite{Dr}
in the study of the algebraic realization of Poisson-Lie group. 
 Let $\mathfrak{g}$ be a Lie algebra.
A bialgebra structure is obtained by a corresponding set of comultiplications together with a set of compatibility conditions between the multiplications and the comultiplications.
 In the Hom-setting, there are two approaches to study Hom-Lie bialgebras; The one given in \cite{DY1} where the compatibility condition stands for the cobrackt $\Delta$ is a $1$-cocycle on the Hom-Lie algebra $\mathfrak{g}$. An other one says that  there is not a natural Hom-Lie algebra structure
on $\mathfrak{g}\oplus \mathfrak{g}^*$ such that $(\mathfrak{g}\oplus \mathfrak{g}^*,\mathfrak{g}, \mathfrak{g}^*)$ is a Manin triple for Hom-Lie algebras. To solve this problem, Sheng and Cai introduced the notion of Hom-Lie bialgebras, see for instance \cite{CS}. These structures, called purely Hom-Lie bialgebras, are defined on regular Hom-Lie algebras, namely Hom-Lie algebras with invertible $\alpha$. As a result, many studies have concentrated on bialgebras theory of diverse algebraic structures, including Leibniz algebra, mock-Lie algebras Hom-Lie superalgebras, compatible mock-Lie algebras, anti-Leibniz algebras, $n$-ary algebras, etc (see \cite{Karima, KTAS, MSA, FSS, SB, Hou}).

Poisson algebra named from  Siméon Poisson is both a Lie algebra and a commutative associative algebra combined in certain sense. That is
a Poisson algebra is a triple $(A,[\cdot,\cdot],\circ)$ where $(A,[\cdot,\cdot])$ is a Lie algebra and $(A,\circ)$ is a commutative associative algebra satisfying a compatibilty condition. These algebras play important role in geometry \cite{Wei77,Vaisman1}, classical and quantum mechanics \cite{Arn78,Dirac64,OdA},  algebraic geometry \cite{GK04,Pol97}, quantization theory \cite{Hue90,Kon03,Rem21} and quantum groups \cite{CP1,Dr87}, see also \cite{NB1} for more details on Poisson bialgebras.
The admissible Poisson structure was given in \cite{MR06}, it was studied via
the polarization-depolarization process which was given in \cite {LiLo}, see also \cite{MR06};
 that is, polarization deals with structure with one operation as structure with one commutative and anti-commutative operations, whereas depolarization interprets structure with one commutative and anti-commutative operation as structure with one operation.
Given a Poisson algebra, we can have an algebra with one operation via the depolarization concept, the resulted structure is called admissible Poisson algebra denoted $(A,\star)$. Conversely, given an admissible Poisson algebra we can obtain the corresponding Poisson algebra through the polarization concept.
These algebras were explicitely studied in \cite{Bai3} and since then a lot of works are done on it like admissible Poisson superalgebra \cite{Rem12}.

Admissible Poisson bialgebras theory was established in\cite{Bai3} where the authors characterize these structure by means of matched pairs and Manin triple. 
In this paper, we give the Hom-analogues of these structures by investigating their properties.
We introduce the notion of admissible Hom-Poisson bialgebra as an equivalent structure of a Manin triple of admissible Hom-Poisson algebras, which is interpreted in terms of matched pairs of admissible Hom-Poisson algebras. We would like to mention that there is a theory of Hom-Poisson algebras introduced by Makhlouf and Silvestrov in \cite{MS1}  which combines a commutative Hom-associative algebra and a
Hom-Lie algebra such that a Hom-Leibniz identity is satisfied. The purpose of this paper is to develop a theory of admissible Hom-Poisson bialgebras.\\

There is a close relationship between admissible Hom-Poisson algebra and admissible Hom-pre-Poisson algebra. Namely, an admissible Hom-pre-Poisson algebra $(\mathcal{P},\prec,\succ,\alpha)$ gives rise to admissible Hom-Poisson algebra via suitable relathionship which is called the subadjacent admissible Hom-Poisson algebra denoted $\mathcal{P}^{c}$. Furthermore, define the map $L:\mathcal{P}\longrightarrow \mathfrak{gl}(\mathcal{P})$ defined by $L_{x}(y)=x\prec y, \forall x,y\in \mathcal{P}$, gives rise to a representation of the subadjacent admissible Hom-Poisson algebra on $\mathcal{P}$ with respect to $\alpha\in \mathfrak{gl}(\mathcal{P})$. 

The paper is organized as follows:
%In Section ~\ref{pre} we recall %some relevant facts and %definitions about admissible Poisson algebras. 
In Section ~\ref{sp} 
%is devoted to 
we deal with admissible Hom-Poisson algebras. In Section ~\ref{rep} we consider representations, matched pairs and Manin triples of admissible Hom-Poisson algebras. Section ~\ref{BI} is devoted to study the notion of admissible Hom-Poisson bialgebras with emphasis on its compatibility with Manin triples associated to nondegenrate symmetric bilinear form. In Section 5 we consider a special class of admissible Hom-Poisson bialgebra, the coboundary class. In Section ~\ref{oper} we study Hom-$\mathcal{O}$-operators of admissible Hom-Poisson bialgebras and discuss their relationship with admissible Hom-pre-Poisson algebras.

Throughout this paper, all vector spaces are finite-dimensional
over a base field $\mathbb F$.

\section{Admissible Hom-Poisson algebras}\label{sp}

In this paper we adopte the following conventions and notations
\begin{enumerate}
\item[(1)] Let $V$ be a vector space. Let $\tau:V\otimes V\to
V\otimes V$ be the flip operator defined as
    \begin{equation}
    \tau(u\otimes v)=v\otimes u, \quad \forall u,v \in V.
    \end{equation}
     %We call $r=\sum_i x_i\otimes y_i\in V\otimes V$ skew-symmetric (resp. symmetric) if $r=-\tau(r)$ (resp. $r=\tau(r)$).
\item[(2)] Let $V_1,V_2$ be two vector spaces and $T:V_1\to V_2$
be a linear map. Denote the dual map by $T^*:V_2^*\to
V_1^*$, defined by
    \begin{equation}
    \langle v_1,T^*(v_2^*)\rangle =\langle T(v_1),v_2^*\rangle,\quad \forall v_1 \in V_1, v_2^* \in V_2^*.
    \end{equation}
\item[(3)] Let $V$ be a vector space and $A$ be a vector space (usually with some bilinear operations). For a linear map $\rho:A\to
{\rm End}_{\mathbb F}(V)$, define a linear map $\rho^*:A\to {\rm
End}_{\mathbb F}(V^*)$ by
    \begin{equation}
    \langle \rho^*(x)v^*,u\rangle=-\langle v^*,\rho(x)u\rangle, \quad\forall x\in A,u\in V,v^* \in V^*.
    \end{equation}
\end{enumerate}

In this paper, we recall some basics on Hom-Poisson algebras and admissible Hom-Poisson algebra introduced by Donald Yau in \cite{DY}. Recall that a Hom-Lie algebra is a triple $(A,[\cdot,\cdot],\alpha)$ where
   $A$ is a vector space, $[\cdot,\cdot]$ a bilinear map and $\alpha$ a linear map of $A$ and 
$\forall x,y \in A,$ \begin{align*}
    &[x,y]=-[y,x] \quad\text{(skew-symmetry)},\\
    &[\alpha(x),[y,z]]+[\alpha(y),[z,x]]+[\alpha(z),[x,y]]=0,\quad \text{(Jacobi identity)}.
\end{align*}
A Hom-Lie algebra $(A,[\cdot,\cdot],\alpha)$ is called multiplicative, if $\alpha([x,y]))=[\alpha(x),\alpha(y)]$, for all $x,y\in A.$ Throughout this paper all Hom-algebras is considered multiplicative.
A Hom-associative algebra  a triple
$(A,\mu,\alpha)$ consists of a vector space $A$ together with a bilinear map
$\mu: A\otimes A\longrightarrow A$ and a linear map $\alpha:A\longrightarrow A$ such that the following conditions are satisfied $\forall x,y,z\in A$
\begin{align}
&\alpha(\mu(x,y))=\mu(\alpha(x),\alpha(y))\\
&\alpha(y)\mu(\alpha(x),\mu(y,z))=\mu(\mu(y,z),\alpha(z))
\end{align}
\begin{defi} \cite{DY} The quadruple
 $(A,[\cdot,\cdot],\circ,\alpha)$ is called a Hom-Poisson algebra
if  $(A,[\cdot,\cdot],\alpha)$ is a Hom-Lie algebra,  $(A,\circ,\alpha)$ is a Hom-associative algebra and
 the following condition is satisfied
 \begin{equation}\label{lHom-poiss-jacobi}
[\alpha(x),y\circ z]=[x,y]\circ \alpha(z)+\alpha(y)\circ [x,z],\quad \forall x,y,z\in A.
\end{equation}
The operations $[\cdot,\cdot]$ and $\circ$ are called the Hom-Poisson bracket and the Hom-Poisson product respectively. When $\alpha$ is the identity map, $A$ is reduced to a classical Poisson algebra.

\end{defi}

\begin{defi}
Let $A$ be a vector space equipped with one bilinear operation
$\diamond:A\otimes A\rightarrow A$ and a morphism $\alpha:A\longrightarrow A$. We call $(A,\diamond,\alpha)$ an {\bf admissible
Hom-Poisson algebra} if the following equation holds, for all $ x,y,z\in A$,
\begin{equation}
(x\diamond y)\diamond \alpha(z)=\alpha(x)\diamond(y\diamond z)-\frac{1}{3}(-\alpha(x)\diamond (z\diamond y)+\alpha(z)\diamond( x\diamond y)+\alpha(y)\diamond (x\diamond
z)- \alpha(y)\diamond(z\diamond x)).
\label{c1}
\end{equation}
\end{defi}
\begin{rmk}
 For $\alpha=Id$ we cover the class of admissible Poisson algebra \cite{Bai3}.   An admissible Hom-Poisson algebra is called multiplicative if
$$\alpha(x\diamond y)=\alpha(x)\diamond \alpha(y)$$
it is called regular if $\alpha$ is invertible.
\end{rmk}

Here we give the Yau twist of admissible Poisson algebras
\begin{pdef}
Let $(A,\diamond)$ be an admissible Poisson algebra and let $\alpha:A\longrightarrow A$ a morphism. Then $(A,\diamond_{\alpha}:=\alpha\circ \diamond,\alpha)$ is an admissible Hom-Poisson algebra called the Yau twist of $(A,\diamond)$.    
\end{pdef}
\begin{proof}
Let $(A,\diamond)$ be an admissible Poisson algebra. Our aim is to prove that $(A,\diamond_{\alpha}:=\alpha\circ \diamond,\alpha)$ is an admissible Hom-Poisson algebra. We have
\begin{align*}
&-(x\diamond_{\alpha} y)\diamond_{\alpha} \alpha(z)+\alpha(x)\diamond_{\alpha}(y\diamond_{\alpha} z)-\frac{1}{3}(-\alpha(x)\diamond_{\alpha} (z\diamond_{\alpha} y)+\alpha(z)\diamond_{\alpha}( x\diamond_{\alpha} y)+\alpha(y)\diamond_{\alpha} (x\diamond_{\alpha}
z)\\
&- \alpha(y)\diamond_{\alpha}(z\diamond_{\alpha} x))\\
&=\alpha^{2}\Big(-(x\diamond y)\diamond z+x\diamond(y\diamond z)-\frac{1}{3}(-x\diamond (z\diamond y)+z\diamond( x\diamond y)+y\diamond (x\diamond
z)- y\diamond(z\diamond x))\Big)\\
&=0
\end{align*}
Hence the proof is acheived.
\end{proof}
\begin{defi}
An admissible Hom-Poisson algebra
$(A,\diamond,\alpha)$ is called of admissible Poisson  Hom-type if there exists an admissible Poisson algebra $(A,\diamond')$ such that $\diamond=\alpha\circ\diamond'$    \end{defi}
\begin{lem}
Any regular admissible Hom-Poisson algebra is an admissible Poisson-type. \end{lem}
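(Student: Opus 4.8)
The plan is to untwist the structure by $\alpha^{-1}$, inverting the Yau-twist construction of the preceding Definition-Proposition. Since the algebra is regular, $\alpha$ is invertible, and as all Hom-algebras in this paper are multiplicative it satisfies $\alpha(x\diamond y)=\alpha(x)\diamond\alpha(y)$. I would set $x\diamond' y:=\alpha^{-1}(x\diamond y)$, so that $\diamond=\alpha\circ\diamond'$ holds by construction. The entire content of the lemma then reduces to verifying that $(A,\diamond')$ satisfies the admissible Poisson identity, that is, equation \eqref{c1} specialized to $\alpha=\Id$.

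First I would record the two commutation rules for $\alpha^{-1}$ coming from multiplicativity. Applying $\alpha^{-1}$ to $\alpha(x\diamond y)=\alpha(x)\diamond\alpha(y)$ and substituting $\alpha^{-1}(x),\alpha^{-1}(y)$ shows that $\alpha^{-1}$ is itself multiplicative; consequently
\[
\alpha^{-1}(a)\diamond b=\alpha^{-1}\big(a\diamond\alpha(b)\big),\qquad a\diamond\alpha^{-1}(b)=\alpha^{-1}\big(\alpha(a)\diamond b\big).
\]
These two identities are the only tools the argument requires.

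Next I would compute each monomial of the $\diamond'$-identity in terms of $\diamond$, applying the rules above twice in each case. The left-hand side becomes
\[
(x\diamond' y)\diamond' z=\alpha^{-2}\big((x\diamond y)\diamond\alpha(z)\big),
\]
while each summand on the right acquires the identical outer factor $\alpha^{-2}$, for instance
\[
x\diamond'(y\diamond' z)=\alpha^{-2}\big(\alpha(x)\diamond(y\diamond z)\big),
\]
and likewise for the four remaining terms $x\diamond'(z\diamond' y)$, $z\diamond'(x\diamond' y)$, $y\diamond'(x\diamond' z)$ and $y\diamond'(z\diamond' x)$.

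Finally I would factor the common $\alpha^{-2}$ out of every term. The bracketed expression that remains is exactly the admissible Hom-Poisson identity \eqref{c1} for $(A,\diamond,\alpha)$ transposed to one side, hence equal to $0$; applying the linear map $\alpha^{-2}$ to $0$ yields $0$, so the admissible Poisson identity for $\diamond'$ holds and $(A,\diamond')$ is an admissible Poisson algebra with $\diamond=\alpha\circ\diamond'$. I expect the only delicate point to be the bookkeeping: one must check that pushing $\alpha^{-1}$ past each of the six products yields the same $\alpha^{-2}$ prefactor and reproduces the matching term of \eqref{c1} with the correct sign. Once the two commutation rules are in place this is entirely routine.
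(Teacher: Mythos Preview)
Your argument is correct and is precisely the standard untwisting argument one expects here. Note, however, that the paper states this lemma without proof, so there is no author's proof to compare against; your proposal supplies exactly the routine verification the paper omits, by inverting the Yau twist via $\diamond':=\alpha^{-1}\circ\diamond$ and using multiplicativity of $\alpha^{-1}$ to pull out a common factor of $\alpha^{-2}$ from the admissible Poisson identity.
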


\begin{pro}{\rm (\cite{DY})} \label{Yau}
If $(A,[\cdot,\cdot],\circ,\alpha)$ is a
Hom-Poisson algebra, then $(A,\diamond, \alpha)$ is an admissible Hom-Poisson algebra,
which is called the {\bf corresponding admissible Hom-Poisson
algebra}, where the multiplication $\diamond$ is defined by
\begin{equation}\label{eq:PA-sPA}
x\diamond y=x\circ y+[x,y],\;\;\forall x,y\in A.\end{equation}
Conversely, if $(A,\diamond,\alpha)$ is an admissible Hom-Poisson algebra, then
$(A,[\;,\;],\circ,\alpha)$ is a Hom-Poisson algebra, which is called the {\bf
corresponding Hom-Poisson algebra}, where the product $.$
and the bracket operation $[\;,\;]$ are respectively defined by
\begin{equation}\label{eq:sPA-PA}
 x\circ
y=\frac{1}{2}(x\diamond y+y\diamond x),\;\;\; [x,y]=\frac{1}{2}(x\diamond y-y\diamond
x),\;\;\forall x,y\in A.
\end{equation}
\end{pro}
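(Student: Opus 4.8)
The plan is to show that the two assignments in the proposition are mutually inverse and that, under this correspondence, the single admissible identity \eqref{c1} is equivalent to the three defining conditions of a Hom-Poisson algebra taken together; this settles both directions at once. First I would record the elementary observation that for any bilinear product $\diamond$ the formulas $x\circ y=\tfrac12(x\diamond y+y\diamond x)$ and $[x,y]=\tfrac12(x\diamond y-y\diamond x)$ produce a commutative operation $\circ$ and a skew-symmetric operation $[\cdot,\cdot]$ with $x\diamond y=x\circ y+[x,y]$, and that conversely a commutative $\circ$ together with a skew $[\cdot,\cdot]$ are recovered as the symmetric and antisymmetric parts of $\diamond=\circ+[\cdot,\cdot]$. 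Since $\alpha$ is linear, multiplicativity of $\alpha$ for $\diamond$ is equivalent to multiplicativity for both $\circ$ and $[\cdot,\cdot]$, so the twisting map transfers between the two pictures without trouble. Thus it suffices to prove, for a fixed product $\diamond=\circ+[\cdot,\cdot]$, that \eqref{c1} holds if and only if $(A,[\cdot,\cdot],\alpha)$ is a Hom-Lie algebra, $(A,\circ,\alpha)$ is Hom-associative, and the compatibility \eqref{lHom-poiss-jacobi} holds.

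The core computation is to substitute $\diamond=\circ+[\cdot,\cdot]$ into the expression
\[
E(x,y,z):=(x\diamond y)\diamond\alpha(z)-\alpha(x)\diamond(y\diamond z)+\tfrac13\bigl(-\alpha(x)\diamond(z\diamond y)+\alpha(z)\diamond(x\diamond y)+\alpha(y)\diamond(x\diamond z)-\alpha(y)\diamond(z\diamond x)\bigr)
\]
and expand. Each of the six outer products splits into four terms according to whether the inner and outer factors are drawn from $\circ$ or from $[\cdot,\cdot]$, giving a sum that I would sort into three packets: the purely associative packet (both operations $\circ$), the purely Lie packet (both operations $[\cdot,\cdot]$), and the mixed packet (one of each). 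Using commutativity of $\circ$ and skew-symmetry of $[\cdot,\cdot]$ to merge like terms, I expect the associative packet to collapse to the Hom-associator $(x\circ y)\circ\alpha(z)-\alpha(x)\circ(y\circ z)$, the Lie packet to collapse to a multiple of the Hom-Jacobiator $[\alpha(x),[y,z]]+[\alpha(y),[z,x]]+[\alpha(z),[x,y]]$, and the mixed packet to collapse to the Leibniz defect $[\alpha(x),y\circ z]-[x,y]\circ\alpha(z)-\alpha(y)\circ[x,z]$ together with its symmetry partners. This is purely formal but term-heavy, and it is the step where the exact coefficients, in particular the $\tfrac13$, must be matched.

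For the direction from Hom-Poisson to admissible, each of the three packets vanishes by the corresponding axiom, so $E\equiv 0$ and \eqref{c1} holds. For the converse I would exploit that the three packets have distinct symmetry types in the arguments $(x,y,z)$: the associative packet is constrained by the commutativity of $\circ$, the Jacobi packet is totally skew, and the Leibniz packet is of mixed type. Applying to the relation $E(x,y,z)=0$ the appropriate averages over the action of $\mathfrak{S}_3$ on $(x,y,z)$ — the full symmetrization to isolate Hom-associativity, the total antisymmetrization to isolate the Hom-Jacobi identity, and a mixed combination such as $E(x,y,z)-E(y,x,z)$ to isolate the Leibniz compatibility — recovers the three conditions one at a time.

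The main obstacle I anticipate is not conceptual but combinatorial: keeping the twenty-four expanded terms organized, applying commutativity and skew-symmetry consistently (remembering that $\alpha$ sits on a different argument in different terms), and checking that the symmetry-type separation is clean enough that the single identity $E=0$ genuinely forces each packet to vanish independently. A secondary point to verify is that no cross-contamination occurs between packets under the chosen $\mathfrak{S}_3$-averages; should the naive averages mix the conditions, I would instead solve the small linear system relating the symmetrized versions of $E$ to the associator, the Jacobiator, and the Leibniz defect.
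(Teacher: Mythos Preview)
The paper does not supply its own proof of this proposition; it is quoted from Yau \cite{DY} and stated without argument, so there is nothing in the paper to compare against beyond the statement itself. Your polarization/depolarization strategy is the standard one for such results and is correct in outline: substituting $\diamond=\circ+[\cdot,\cdot]$ into $E(x,y,z)$ and sorting the twenty-four terms into pure-$\circ$, pure-bracket, and mixed packets does produce (nonzero multiples of) the Hom-associator, the Hom-Jacobiator, and Leibniz-type defects, respectively, and the forward direction follows immediately.

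One point in the converse needs adjustment. For a commutative $\circ$ the Hom-associator satisfies $A(z,y,x)=-A(x,y,z)$, so its full symmetrization over $\mathfrak S_3$ vanishes; the Jacobiator, being totally skew, also symmetrizes to zero. Hence the full symmetrization of $E=0$ does \emph{not} isolate Hom-associativity as you suggest---it yields instead a relation coming entirely from the mixed packet. The antisymmetrization behaves similarly unexpectedly. Your fallback of solving the small linear system among the six permuted identities $E(\sigma(x),\sigma(y),\sigma(z))=0$ is exactly what is required; alternatively, and perhaps more transparently, one can write each of the associator, the Jacobiator, and the Leibniz defect directly in terms of $\diamond$ via \eqref{eq:sPA-PA} and reduce each to zero using several instances of \eqref{c1}. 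Either route works, so the plan is sound once the bookkeeping about which combination isolates which axiom is corrected.
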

\begin{lem}
Let $(A,\diamond,\alpha)$ be an admissible Hom-Poisson algebra, then
\begin{equation}\label{PoissAsso}
 (x\diamond y)\diamond \alpha(z)-
 \alpha(x)\diamond(y\diamond z)=\alpha(z)\diamond(y\diamond x)-
 (z\diamond y)\diamond\alpha(x).\end{equation}
\end{lem}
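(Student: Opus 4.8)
The plan is to recognize the asserted identity as a skew-symmetry statement about a single ``associator'' expression and to reduce it to a term-by-term cancellation. Introduce the shorthand
\[
F(x,y,z) := (x\diamond y)\diamond \alpha(z) - \alpha(x)\diamond(y\diamond z),
\]
so that the left-hand side of \eqref{PoissAsso} is precisely $F(x,y,z)$. Reading the right-hand side $\alpha(z)\diamond(y\diamond x) - (z\diamond y)\diamond \alpha(x)$, I observe that it is exactly $-F(z,y,x)$, obtained by swapping $x$ and $z$ in the definition of $F$ and negating. Hence the lemma is equivalent to the single relation $F(x,y,z) + F(z,y,x) = 0$, and this is what I would establish.

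The only tool needed is the defining identity \eqref{c1}, which I would use to rewrite $F$ so that all of its cubic terms have the uniform shape $\alpha(-)\diamond(-\diamond-)$. Concretely, \eqref{c1} yields
\[
F(x,y,z) = \tfrac{1}{3}\Big(\alpha(x)\diamond(z\diamond y) - \alpha(z)\diamond(x\diamond y) - \alpha(y)\diamond(x\diamond z) + \alpha(y)\diamond(z\diamond x)\Big).
\]
I would then write down the companion expansion of $F(z,y,x)$ simply by interchanging $x \leftrightarrow z$ throughout this right-hand side, giving
\[
F(z,y,x) = \tfrac{1}{3}\Big(\alpha(z)\diamond(x\diamond y) - \alpha(x)\diamond(z\diamond y) - \alpha(y)\diamond(z\diamond x) + \alpha(y)\diamond(x\diamond z)\Big),
\]
and add the two expressions.

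The last step is to note that the sum cancels term by term: $\alpha(x)\diamond(z\diamond y)$ against $-\alpha(x)\diamond(z\diamond y)$, then $-\alpha(z)\diamond(x\diamond y)$ against $+\alpha(z)\diamond(x\diamond y)$, and likewise for the two $\alpha(y)$-terms. Therefore $F(x,y,z)+F(z,y,x)=0$, which is the claim. I do not expect any real obstacle here, since the identity is purely formal and requires neither Hom-associativity nor multiplicativity of $\alpha$ beyond \eqref{c1} itself; the only point demanding care is bookkeeping the signs and performing the $x\leftrightarrow z$ substitution correctly, so that the cancellation is immediate and does not force a second appeal to \eqref{c1}.
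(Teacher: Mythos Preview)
Your argument is correct: rewriting the associator $F(x,y,z)$ via \eqref{c1} and observing that the resulting four-term expression is antisymmetric under $x\leftrightarrow z$ is exactly the right move, and the cancellation you describe goes through verbatim. The paper states this lemma without proof, so there is nothing further to compare; your one-line use of \eqref{c1} is the natural justification the paper implicitly relies on.
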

%\begin{proof}
    
%\end{proof}

\section{Representations  and matched pairs of admissible Hom-Poisson algebras} \label{rep}
In this section, we introduce the notions of representations and
matched pairs of admissible Hom-Poisson algebras and then give some
properties.
\subsection{Representation and duale representation of Admissible Hom-Poisson algebras}
\begin{defi}\label{definition module}
Let $(A,\diamond,\alpha)$ be a multiplicative admissible Hom-Poisson algebra,  $V$ be a vector
space and $\beta\in \rm End(V)$. Let ${\frkl},\frkr:A\to {\rm End}_{\mathbb F}(V)$ be two linear maps.
The triple $(V,\frkl,\frkr,\beta)$ is called a \textup{\textbf{representation}}
of $(A,\diamond,\alpha)$ if
\begin{eqnarray}
\frkl(\alpha(x))\circ\beta=\beta\circ\frkl(x) \label{condmultipli1}\\
\frkr(\alpha(x))\circ\beta=\beta\circ\frkr(x)\label{condmultipli2}
\end{eqnarray}
{\small\begin{eqnarray}
\frkl(x\diamond y)\circ\beta=\frkl(\alpha(x))\frkl(y)-\frac{1}{3}\big(-\frkl(\alpha(x))\frkr(y)+\frkr(x\diamond y)\circ\beta+\frkl(\alpha(y))\frkl(x)-\frkl(\alpha(y))\frkr(x)\big),\label{c2} \\
\frkr(\alpha(y))\frkl(x)=\frkl(\alpha(x))\frkr(y)-\frac{1}{3}\big(-\frkl(\alpha(x))\frkl(y)+\frkl(\alpha(y))\frkl(x)+\frkr(x\diamond y)\circ\beta-\frkr(y\diamond x)\circ\beta\big),\label{c3}\\
\frkr(\alpha(y))\frkr(x)=\frkr(x\diamond y)\circ\beta-\frac{1}{3}\big(-\frkr(y\diamond x)\circ\beta+\frkl(\alpha(y))\frkr(x)+\frkl(\alpha(x))\frkr(y)-\frkl(\alpha(x))\frkl(y)\big)\label{c4}
\end{eqnarray}}
for all $x,y\in A$. Two representations $(V_1,\frkl_1,\frkr_1,\beta_{1})$ and
$(V_2,\frkl_2,\frkr_2,\beta_2)$ of an admissible Hom-Poisson algebra $A$ are called {\bf
equivalent} if there exists an isomorphism $\varphi:V_1\rightarrow
V_2$ satisfying
\begin{equation}\label{lem:rep1}
\varphi\circ \frkl_1(x) =\frkl_2(x)\circ\varphi,\quad \varphi
\frkr_1(x)=\frkr_2(x)\varphi,\quad \varphi\circ\beta_1=\beta_2\circ\varphi,\quad\;\forall x\in A.
\end{equation}
\end{defi}

\begin{lem}\label{lem:rep-property1}
  Let $(V,\frkl,\frkr,\beta)$ be a representation of an admissible Hom-Poisson algebra $(A,\diamond)$.
  Then the following equation holds:
  \begin{equation}\label{eq:rep-property1}
    \frkl(x\diamond y)\circ\beta+\frkr(\alpha(x)) \frkr(y)=\frkl(\alpha(x))\frkl(y)+\frkr(y\diamond x)\circ \beta,\;\;\forall x,y\in
    A.
  \end{equation}
\end{lem}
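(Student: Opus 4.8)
The plan is to derive \eqref{eq:rep-property1} as a direct linear combination of two of the representation axioms, using no hypothesis beyond \eqref{c2} and \eqref{c4}; in particular neither the invertibility of $\beta$ nor the remaining axioms are required. First I would rewrite \eqref{c2} in the fully expanded form
\[
\frkl(x\diamond y)\circ\beta=\frkl(\alpha(x))\frkl(y)+\tfrac{1}{3}\frkl(\alpha(x))\frkr(y)-\tfrac{1}{3}\frkr(x\diamond y)\circ\beta-\tfrac{1}{3}\frkl(\alpha(y))\frkl(x)+\tfrac{1}{3}\frkl(\alpha(y))\frkr(x).
\]
Next I would specialize \eqref{c4} to the pair $(y,x)$, i.e.\ interchange the roles of $x$ and $y$, which after expansion gives
\[
\frkr(\alpha(x))\frkr(y)=\frkr(y\diamond x)\circ\beta+\tfrac{1}{3}\frkr(x\diamond y)\circ\beta-\tfrac{1}{3}\frkl(\alpha(x))\frkr(y)-\tfrac{1}{3}\frkl(\alpha(y))\frkr(x)+\tfrac{1}{3}\frkl(\alpha(y))\frkl(x).
\]

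Adding these two identities is the whole argument. The four ``mixed'' operators $\frkl(\alpha(x))\frkr(y)$, $\frkr(x\diamond y)\circ\beta$, $\frkl(\alpha(y))\frkl(x)$ and $\frkl(\alpha(y))\frkr(x)$ each occur once with coefficient $+\tfrac{1}{3}$ and once with coefficient $-\tfrac{1}{3}$, so they cancel in pairs. The only terms that survive are $\frkl(\alpha(x))\frkl(y)$ coming from \eqref{c2} and $\frkr(y\diamond x)\circ\beta$ coming from the reindexed \eqref{c4}, while the left-hand sides add to exactly $\frkl(x\diamond y)\circ\beta+\frkr(\alpha(x))\frkr(y)$. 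This produces precisely \eqref{eq:rep-property1}.

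I do not anticipate a genuine obstacle: the statement is a purely formal consequence of two defining relations. The one place to stay careful is the reindexing step — one must apply \eqref{c4} to $(y,x)$ rather than quote it verbatim, because the target carries $\frkr(\alpha(x))\frkr(y)$ and $\frkr(y\diamond x)\circ\beta$ while \eqref{c4} as written carries $\frkr(\alpha(y))\frkr(x)$ and $\frkr(x\diamond y)\circ\beta$. It is worth remarking that axiom \eqref{c3} and the multiplicativity conditions \eqref{condmultipli1}--\eqref{condmultipli2} play no role in this particular identity.
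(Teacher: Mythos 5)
Your proof is correct and is essentially identical to the paper's own argument: the paper also adds Eq.~\eqref{c2} to Eq.~\eqref{c4} applied to the swapped pair $(y,x)$ and observes that the four mixed terms cancel. In fact your careful reindexing is slightly cleaner than the paper's, whose displayed form of the swapped Eq.~\eqref{c4} contains an apparent typo (it writes $\frkl(\alpha(x))\frkl(y)$ where the swap yields $\frkl(\alpha(x))\frkr(y)$), which your version silently corrects.
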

\begin{proof}
 By Eq. \eqref{c2}, we have
 $$\frkl(x\diamond y)\circ\beta=\frkl(\alpha(x))\frkl(y)-\frac{1}{3}\big(-\frkl(\alpha(x))\frkr(y)+\frkr(x\diamond y)\circ\beta+\frkl(\alpha(y))\frkl(x)-\frkl(\alpha(y))\frkr(x)\big) $$ and Eq. \eqref{c4} gives
$$\frkr(\alpha(x))\frkr(y)=\frkr(y\diamond x)\circ\beta-\frac{1}{3}\big(-\frkr(x\diamond y)\circ\beta+\frkl(\alpha(x))\frkl(y)+\frkl(\alpha(y))\frkr(x)-\frkl(\alpha(y))\frkl(x)\big).$$
Hence, \eqref{eq:rep-property1} holds by direct summation.
\end{proof}

\begin{pro}
Let  $(V, \rho, \mu)$ be a representation of an admissible Poisson algebra $(A,\diamond)$. Suppose that there exists $\beta:V\longrightarrow V$ such that  \ref{condmultipli1} and \ref{condmultipli2} are satisfied.\\ Then $(V,\rho_{\beta}:=\beta\circ\rho,\mu_{\beta}:=\beta\circ\mu,\beta)$ is a representation of $(A,\diamond_{\alpha},\alpha)$.   
\end{pro}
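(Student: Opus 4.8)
The plan is to check, one after another, the five defining identities \eqref{condmultipli1}, \eqref{condmultipli2}, \eqref{c2}, \eqref{c3} and \eqref{c4} of a representation of the Yau twist $(A,\diamond_{\alpha},\alpha)$ for the candidate $(V,\rho_{\beta},\mu_{\beta},\beta)$, reducing each to the corresponding identity for the classical representation $(V,\rho,\mu)$ of $(A,\diamond)$. The one device that drives every reduction is the pair of hypotheses $\rho(\alpha(x))\circ\beta=\beta\circ\rho(x)$ and $\mu(\alpha(x))\circ\beta=\beta\circ\mu(x)$: they let me slide a single $\beta$ past any occurrence of $\rho(\alpha(\cdot))$ or $\mu(\alpha(\cdot))$.

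First I would dispose of the multiplicativity identities \eqref{condmultipli1} and \eqref{condmultipli2} for the new maps. Writing $\rho_{\beta}=\beta\circ\rho$ and using the hypothesis on $\rho$,
\[
\rho_{\beta}(\alpha(x))\circ\beta=\beta\circ\rho(\alpha(x))\circ\beta=\beta\circ\beta\circ\rho(x)=\beta\circ\rho_{\beta}(x),
\]
and the same computation with $\mu$ in place of $\rho$ yields \eqref{condmultipli2}. For the three main identities I would exploit the following pattern: when $\rho_{\beta},\mu_{\beta}$ are substituted, each summand picks up exactly two copies of $\beta$, which can all be moved to the left. For \eqref{c2}, say, recalling $x\diamond_{\alpha}y=\alpha(x\diamond y)$ and applying the hypothesis with $x\diamond y$ in place of $x$, the left-hand side becomes
\[
\rho_{\beta}(x\diamond_{\alpha}y)\circ\beta=\beta\circ\rho(\alpha(x\diamond y))\circ\beta=\beta^{2}\circ\rho(x\diamond y),
\]
while a typical right-hand summand reduces as
\[
\rho_{\beta}(\alpha(x))\circ\mu_{\beta}(y)=\beta\circ\rho(\alpha(x))\circ\beta\circ\mu(y)=\beta^{2}\circ\rho(x)\circ\mu(y),
\]
and likewise for every other term, including $\mu_{\beta}(x\diamond_{\alpha}y)\circ\beta=\beta^{2}\circ\mu(x\diamond y)$. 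Hence \eqref{c2} for $(\rho_{\beta},\mu_{\beta})$ is precisely $\beta^{2}$ applied to the classical form of \eqref{c2} for $(\rho,\mu)$, which holds because $(V,\rho,\mu)$ is a representation of $(A,\diamond)$. The identities \eqref{c3} and \eqref{c4} are handled in exactly the same fashion, each reducing to $\beta^{2}$ times its classical counterpart.

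The actual manipulations are routine once this bookkeeping is in place; the only point that needs attention — and the main, if minor, obstacle — is to confirm that every single summand contributes the \emph{same} power $\beta^{2}$, so that the whole identity factors cleanly as $\beta^{2}\circ(\text{classical relation})$ and the $\tfrac13$-coefficients are untouched. This uniformity is exactly what the two hypotheses \eqref{condmultipli1} and \eqref{condmultipli2} guarantee, since they apply identically to $\rho$ and to $\mu$ evaluated on $\alpha(\cdot)$; beyond the defining relation $x\diamond_{\alpha}y=\alpha(x\diamond y)$ of the Yau twist, no further property of $\alpha$ is invoked.
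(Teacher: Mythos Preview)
Your proof is correct: each of the five representation axioms for $(V,\rho_{\beta},\mu_{\beta},\beta)$ over $(A,\diamond_{\alpha},\alpha)$ does reduce, via the intertwining hypotheses \eqref{condmultipli1}--\eqref{condmultipli2}, to $\beta^{2}$ applied to the corresponding classical identity for $(V,\rho,\mu)$ over $(A,\diamond)$, exactly as you compute. The paper itself states this proposition without proof, and your argument is the standard Yau-twist verification used elsewhere in the paper (e.g.\ for the twist of the algebra structure), so there is nothing to contrast.
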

%\begin{defi}
 %The representation $(V,\rho,\mu)$ is called the induced representation of   $(V,\rho_{\beta},\mu_{\beta},\beta)$. 
%\end{defi}
\begin{pro}
 Let $(A,\diamond,\alpha)$ be an admissible Hom-Poisson algebra with induced admissible Poisson algebra
 $(A,\diamond')$. Two representation $(V_1,\rho_{1},\mu_{1},\beta_1)$ and 
 $(V_2,\rho_{2},\mu_{2},\beta_2)$ are isomorphic if and only if the induced representations $(V_1,\rho_{1}',\mu_{1}')$ and $(V_2,\rho_{2}',\mu_{2}')$ are isomorphic by an isomorphism $f:V_1\longrightarrow V_2$.
\end{pro}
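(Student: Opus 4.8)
The plan is to verify that one and the same linear isomorphism serves both structures, reducing the statement to a bookkeeping of the Yau twist at the level of representations. Recall from the preceding proposition that, when $(A,\diamond,\alpha)$ is of admissible Poisson Hom-type with $\diamond=\alpha\circ\diamond'$, a representation $(V,\rho,\mu,\beta)$ of $(A,\diamond,\alpha)$ arises from a representation $(V,\rho',\mu')$ of $(A,\diamond')$ by twisting, $\rho=\beta\circ\rho'$ and $\mu=\beta\circ\mu'$; working in the regular setting, where $\beta$ is invertible, the induced representation is recovered by $\rho'=\beta^{-1}\circ\rho$ and $\mu'=\beta^{-1}\circ\mu$. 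First I would fix the candidate intertwiner to be the same map in both settings, namely $f=\varphi$, and then show that the three intertwining conditions defining an isomorphism of Hom-representations, $\varphi\circ\rho_1(x)=\rho_2(x)\circ\varphi$, $\varphi\circ\mu_1(x)=\mu_2(x)\circ\varphi$ and $\varphi\circ\beta_1=\beta_2\circ\varphi$, are equivalent to the two conditions $\varphi\circ\rho_1'(x)=\rho_2'(x)\circ\varphi$ and $\varphi\circ\mu_1'(x)=\mu_2'(x)\circ\varphi$ defining an isomorphism of the induced admissible Poisson representations.

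For the forward direction I would substitute $\rho_i=\beta_i\circ\rho_i'$ into $\varphi\circ\rho_1(x)=\rho_2(x)\circ\varphi$ to obtain $\varphi\circ\beta_1\circ\rho_1'(x)=\beta_2\circ\rho_2'(x)\circ\varphi$, rewrite the left-hand side using $\varphi\circ\beta_1=\beta_2\circ\varphi$ as $\beta_2\circ\varphi\circ\rho_1'(x)$, and cancel the invertible $\beta_2$ on the left to reach $\varphi\circ\rho_1'(x)=\rho_2'(x)\circ\varphi$; the identical computation with $\mu$ in place of $\rho$ gives the second condition. Conversely, starting from the two induced conditions together with $\varphi\circ\beta_1=\beta_2\circ\varphi$, I would run the chain $\varphi\circ\rho_1(x)=\varphi\circ\beta_1\circ\rho_1'(x)=\beta_2\circ\varphi\circ\rho_1'(x)=\beta_2\circ\rho_2'(x)\circ\varphi=\rho_2(x)\circ\varphi$, and likewise for $\mu$, recovering an isomorphism of the Hom-representations.

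The step I expect to be the main obstacle is the correct handling of the twisting maps $\beta_1,\beta_2$: an isomorphism of the induced ordinary admissible Poisson representations intertwines only $\rho'$ and $\mu'$ and a priori records nothing about $\beta$, while an isomorphism of Hom-representations must additionally commute with the $\beta_i$. The equivalence therefore hinges on noting that the compatibility $\varphi\circ\beta_1=\beta_2\circ\varphi$ must be carried along on the induced side as part of the data of $f$ (or shown to be automatic), and on invoking regularity so that $\beta_2$ is invertible and may be cancelled in the forward direction. Once this common $\beta$-intertwining condition is made explicit, both implications reduce to the short computations above.
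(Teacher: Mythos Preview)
The paper states this proposition without proof, so there is no argument to compare against. Your approach is the natural one and is essentially correct: fix a single linear isomorphism $\varphi$, use the twisting relations $\rho_i=\beta_i\circ\rho_i'$, $\mu_i=\beta_i\circ\mu_i'$ from the preceding proposition, and pass between the two sets of intertwining conditions by inserting or cancelling the invertible $\beta_i$.

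You have also put your finger on the genuine issue with the statement as written. In the converse direction, an isomorphism of the induced admissible Poisson representations $(V_i,\rho_i',\mu_i')$ imposes only the two conditions $\varphi\circ\rho_1'(x)=\rho_2'(x)\circ\varphi$ and $\varphi\circ\mu_1'(x)=\mu_2'(x)\circ\varphi$; it does not by itself force $\varphi\circ\beta_1=\beta_2\circ\varphi$, and without that compatibility your chain $\varphi\circ\rho_1(x)=\varphi\circ\beta_1\circ\rho_1'(x)=\beta_2\circ\varphi\circ\rho_1'(x)$ cannot be justified. The phrase ``by an isomorphism $f:V_1\to V_2$'' in the proposition should be read as implicitly requiring $f\circ\beta_1=\beta_2\circ f$ (i.e.\ $f$ is a morphism of the underlying Hom-modules), exactly as you suggest. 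With that reading the equivalence is immediate from your computations; without it the ``if'' direction fails in general, since one can take $V_1=V_2$, $\rho_1'=\rho_2'$, $\mu_1'=\mu_2'$, $\varphi=\mathrm{id}$, but choose $\beta_1\neq\beta_2$ compatible with the same untwisted action. So your diagnosis of the obstacle is accurate, and your proof is complete once the $\beta$-compatibility is taken as part of the hypothesis on $f$.
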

\begin{pro}
Let $(A,\diamond,\alpha)$ be an admissible Hom-Poisson algebra. Let $(V,\frkl,\frkr,\beta)$ be a representation of $A$. Define a
bilinear operation
$\diamond_{\frkl,\frkr}:(A\oplus V)\otimes (A\oplus V)\to
A\oplus V$ on $A\oplus V$  and a linear map $\alpha\oplus\beta$ as follows \begin{align*}
    &(x+u)\diamond_{\frkl,\frkr}(y+v)=x\diamond y+\frkl(x)v+\frkr(y)u,\\
    &(\alpha\oplus\beta)(x+v)=\alpha(x)+\beta(v),\quad\forall x,y\in A,u,v\in V.
    \end{align*}
Then $(V,\frkl,\frkr,\beta)$ is a representation of $(A,\diamond,\alpha)$ if and only if
$(A\oplus V,\diamond_{\frkl,\frkr},\alpha\oplus\beta)$ is an admissible Hom-Poisson algebra, which is
called the {\bf semi-direct product} of $A$ by $V$ and denoted by
$A\ltimes_{\frkl,\frkr} V$ or simply $A\ltimes V$.
\end{pro}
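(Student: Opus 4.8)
The plan is to characterize when $(A\oplus V,\diamond_{\frkl,\frkr},\alpha\oplus\beta)$ is an admissible Hom-Poisson algebra by separating the two requirements imposed by the (multiplicative) setting: multiplicativity of the twisting map $\alpha\oplus\beta$, and the defining identity \eqref{c1}. Since every expression involved is trilinear and the only mixed products that survive are those in which a single argument lies in $V$ (note that $V\diamond_{\frkl,\frkr}V=0$), I expect the whole verification to reduce to four families of operator identities matching \eqref{condmultipli1}--\eqref{c4} one for one.

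First I would treat multiplicativity. Expanding $(\alpha\oplus\beta)\big((x+u)\diamond_{\frkl,\frkr}(y+v)\big)$ and $(\alpha\oplus\beta)(x+u)\diamond_{\frkl,\frkr}(\alpha\oplus\beta)(y+v)$ and comparing $A$- and $V$-components, the $A$-part is exactly $\alpha(x\diamond y)=\alpha(x)\diamond\alpha(y)$, which holds because $A$ is multiplicative, while the $V$-part separates (using independence of $u$ and $v$) into $\beta\circ\frkl(x)=\frkl(\alpha(x))\circ\beta$ and $\beta\circ\frkr(y)=\frkr(\alpha(y))\circ\beta$; these are precisely \eqref{condmultipli1} and \eqref{condmultipli2}.

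Next I would substitute $X=x+u$, $Y=y+v$, $Z=z+w$ into \eqref{c1} and decompose according to how many of the three arguments lie in $V$. When all three lie in $A$, the equation is just the defining identity of $A$, which holds by hypothesis. When two or three lie in $V$, every term contains a product of two elements of $V$, hence vanishes on both sides, so these cases are automatic. The substantive content is the case of exactly one $V$-argument: projecting the equation onto $V$, the choice $X=x+u,\,Y=y,\,Z=z$ yields \eqref{c4}, the choice $X=x,\,Y=y+v,\,Z=z$ yields \eqref{c3}, and the choice $X=x,\,Y=y,\,Z=z+w$ yields \eqref{c2}. In carrying this out one must remember that $\alpha\oplus\beta$ applied to an argument carrying a $V$-component contributes a factor $\beta$, so that the terms $\frkr(x\diamond y)\circ\beta$ and $\frkl(x\diamond y)\circ\beta$ appear exactly as written in the representation axioms.

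Since each step is an equivalence, the converse is immediate: assuming $(V,\frkl,\frkr,\beta)$ is a representation, the same expansions, read from right to left, confirm both multiplicativity of $\alpha\oplus\beta$ and \eqref{c1} for all elements of $A\oplus V$, so $A\ltimes_{\frkl,\frkr}V$ is an admissible Hom-Poisson algebra. The main obstacle is organizational rather than conceptual: keeping careful track of the signs and of the coefficient $-\tfrac13$ across the five terms on the right of \eqref{c1} in each of the three slot positions, and confirming that the three resulting $V$-identities coincide with \eqref{c2}, \eqref{c3}, \eqref{c4} after the relevant relabeling of the free indices, with no spurious extra relations remaining.
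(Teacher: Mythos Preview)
Your proposal is correct and follows essentially the same direct-expansion strategy as the paper, which substitutes general elements $(x+u,y+v,z+w)$ into \eqref{c1}, expands, and groups the $V$-terms according to whether they carry $u$, $v$, or $w$, matching each group to one of the axioms \eqref{c4}, \eqref{c3}, \eqref{c2} respectively. Your slot-by-slot specialization is just a reorganization of that same computation; in fact your write-up is a bit more complete, since you make explicit the multiplicativity check \eqref{condmultipli1}--\eqref{condmultipli2} and the converse direction, both of which the paper's proof leaves implicit.
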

\begin{proof}
 Let $(V,\frkl,\frkr,\beta)$ be a representation of $(A,\diamond,\alpha)$.
Define the following operation and linear map $\alpha\oplus\beta$ on $A\oplus V$ as follows, $\forall x,y\in A,u,v\in V$\begin{align*}
    &(x+u)\diamond_{\frkl,\frkr}(y+v)=x\diamond y+\frkl(x)v+\frkr(y)u,\\
    &(\alpha\oplus\beta)(x+v)=\alpha(x)+\beta(v).
    \end{align*}
Consider $(x+u),(y+v),(z+w)\in A\oplus V$, we have
\begin{align*}
&(\alpha+\beta)(x+u)\diamond_{\frkl,\frkr}((y+v)\diamond_{\frkl,\frkr}(z+w))-\frac{1}{3}[-(\alpha+\beta)(x+u)\diamond_{\frkl,\frkr}((z+w)\diamond_{\frkl,\frkr}(y+v))\\
&+(\alpha+\beta)(z+w)\diamond_{\frkl,\frkr}((x+u)\diamond_{\frkl,\frkr}(y+v))+(\alpha+\beta)(y+v)\diamond_{\frkl,\frkr}((x+u)\diamond_{\frkl,\frkr}(z+w))\\
&-(\alpha+\beta)(y+v)\diamond_{\frkl,\frkr}((z+w)\diamond_{\frkl,\frkr}(x+u))]\\
&=\alpha(x)\diamond(y\diamond z)+\frkl(\alpha(x))\frkl(y)w+\frkl(\alpha(x))\frkr(z)v+\frkr(y\diamond z)\beta(u)-\frac{1}{3}\Big[-\alpha(x)\diamond(z\diamond y)\\
&-\frkl(\alpha(x)\frkl(z)v
-\frkl(\alpha(x)\frkr(y)w-\frkr(z\diamond y)\beta(u)+\alpha(z)\diamond(x\diamond y)+\frkl(\alpha(z)\frkl(x)v\\
&+\frkl(\alpha(z)\frkr(y)u+\frkr(x\diamond y)\beta(w)-\alpha(y)\diamond(z\diamond x)-\frkl(\alpha(y))\frkl(z)u-\frkl(\alpha(y))\frkr(x)w\\
&-\frkr(z\diamond x)\beta(v)+\alpha(y)\diamond(x\diamond z)+\frkl(\alpha(y))\frkl(x)w+\frkl(\alpha(y))\frkr(z)u+\frkr(x\diamond z)\beta(v)\Big]\\
&=\Big(\alpha(x)\diamond(y\diamond z)-\frac{1}{3}[-\alpha(x)\diamond(z\diamond y)+\alpha(z)\diamond(x\diamond y)-\alpha(y)\diamond(z\diamond x)+\alpha(y)\diamond(x\c z)]\Big)\\
&+\Big(\frkl(\alpha(x))\frkl(y)w-\frac{1}{3}[-\frkl(\alpha(x)\frkr(y)w+\frkr(x\c y)\beta(w)-\frkl(\alpha(y))\frkr(x)w+\frkl(\alpha(y))\frkl(x)w]\Big)\\
&+\Big(\frkl(\alpha(x))\frkr(z)v-\frac{1}{3}[
-\frkl(\alpha(x)\frkl(z)v+\frkl(\alpha(z))
\frkl(x)v-\frkr(z\diamond x)\beta(v)+\frkr(x\diamond z)\beta(v)]\Big)\\
&+\Big(\frkr(y\diamond z)\beta(u)-\frac{1}{3}[-\frkr(z\diamond y)\beta(u)+\frkl(\alpha(z)\frkr(y)u
-\frkl(\alpha(y))\frkl(z)u+\frkl(\alpha(y))\frkr(z)u]\Big)\\
&=(x\diamond y)\diamond \alpha(z)+\frkl(x\diamond y)\beta(w)+ \frkr(\alpha(z))\frkl(x)v+\frkr(\alpha(z))\frkr(y)u\\
&=(x\diamond y)\diamond \alpha(z)+\frkl(x\diamond y)\beta(w)+ \frkr(\alpha(z))(\frkl(x)v+\frkr(y)u)\\
&=\Big((x\diamond y)+\frkl(x)v+\frkr(y)u\Big)\diamond_{\frkl,\frkr}(\alpha(z)+\beta(w))\\
&=((x+u)\diamond_{\frkl,\frkr}(y+v))\diamond_{\frkl,\frkr}(\alpha+\beta)(z+w).
\end{align*}
Hence $(A\oplus V,\diamond_{\frkl,\frkr},\alpha\oplus\beta)$ is an admissible Hom-Poisson algebra.
\end{proof}
Let $(V,\frkl,\frkr,\beta)$ be a representation of the Hom-Poisson algebra. Always assume that $\beta$ is invertible. For all $x\in A, v\in V, \xi \in V^*$, define $\frkl^*,\frkr^*:A\longrightarrow End(V^*)$ as usaul by
\begin{align*}
&\langle\frkl^*(x)(\xi),v\rangle=-\langle \xi,\frkl(x)v\rangle.\\
&\langle\frkr^*(x)(\xi),v\rangle=-\langle \xi,\frkr(x)v\rangle.
\end{align*}
Note that $\rho^*$ is not a representation in general, one need to add strong condition, see \cite{CS} for instance, where the authors define and explain the origin of the representation $\frkl^{\c}:A\longrightarrow \mathfrak{gl}(V^*)$ where
$\frkl^{\c}(x)(\xi):=\frkl^{*}(\alpha(x))((\beta^{-2})^*(\xi)), \forall x\in A$
and how it can be obtained.

Define $\frkl^{\c},\frkr^{\c}:A\longrightarrow End(V^*)$ by
\begin{align*}
&\frkl^{\c}(x)\xi:=\frkl^{*}(\alpha(x))((\beta^{-2})^*(\xi)) ,\\
&\frkr^{\c}(x)\xi:=\frkr^{*}(\alpha(x))((\beta^{-2})^*(\xi)).
\end{align*}
\begin{pro}
Let $(A,\diamond,\alpha)$ be an admissible Hom-Poisson algebra and $(V,\frkl,\frkr,\beta)$ a representation of $A$ with $\beta$ invertible. Then, $(V^*,-\frkr^{\c},-\frkl^{\c},(\beta^{-1})^*)$ is a representation of $(A,\diamond,\alpha)$.
\end{pro}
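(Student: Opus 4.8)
The plan is to verify directly that the quadruple $(V^*, -\frkr^{\c}, -\frkl^{\c}, (\beta^{-1})^*)$ satisfies the five defining conditions of Definition~\ref{definition module}, namely the two multiplicativity conditions \eqref{condmultipli1}, \eqref{condmultipli2} and the three compatibility conditions \eqref{c2}, \eqref{c3}, \eqref{c4}, now read with left action $L:=-\frkr^{\c}$, right action $R:=-\frkl^{\c}$ and twisting map $\tilde\beta:=(\beta^{-1})^*$. Throughout I would write $B:=\beta^*$, so that $\tilde\beta=B^{-1}$.

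First I would record the dual forms of the multiplicativity conditions. Pairing \eqref{condmultipli1} and \eqref{condmultipli2} against $V$ and using the convention $\langle\frkl^*(x)\xi,v\rangle=-\langle\xi,\frkl(x)v\rangle$ yields the conjugation relations $B\,\frkl^*(\alpha(x))=\frkl^*(x)\,B$ and $B\,\frkr^*(\alpha(x))=\frkr^*(x)\,B$, equivalently $\frkl^*(\alpha^k(x))=B^{-k}\frkl^*(x)B^{k}$ and similarly for $\frkr^*$. Substituting these into the definitions gives the clean reformulation $\frkl^{\c}(x)=\tilde\beta\,\frkl^*(x)\,\tilde\beta$ and $\frkr^{\c}(x)=\tilde\beta\,\frkr^*(x)\,\tilde\beta$. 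In this form the two multiplicativity conditions for the dual, $L(\alpha(x))\circ\tilde\beta=\tilde\beta\circ L(x)$ and $R(\alpha(x))\circ\tilde\beta=\tilde\beta\circ R(x)$, collapse immediately to the conjugation relations just obtained, so this step is routine.

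The heart of the proof is the verification of \eqref{c2}, \eqref{c3}, \eqref{c4} for $(V^*,L,R,\tilde\beta)$. I would substitute $L=-\frkr^{\c}$, $R=-\frkl^{\c}$ together with the reformulation above, and then transport each required identity into a relation among $\frkl^*$, $\frkr^*$ and powers of $B$ by pushing all factors of $\tilde\beta$ to the outside via the conjugation relations. On the other side, I would apply the transpose to the original identities \eqref{c2}, \eqref{c3}, \eqref{c4}; since the transpose reverses composition and the sign convention makes a product of two representation operators transpose with sign $(-1)^2=1$, whereas a single operator (as in the terms $\frkr(x\diamond y)\circ\beta$) transposes with a single sign, the minus signs built into $L=-\frkr^{\c}$ and $R=-\frkl^{\c}$ are precisely what is needed to reconcile the two sign parities. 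Matching the transported identities against the transposed originals then establishes \eqref{c2}--\eqref{c4}; where a direct term-by-term match fails, the extra relation \eqref{eq:rep-property1} of Lemma~\ref{lem:rep-property1} (and its transpose) is used to trade the surplus terms.

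The main obstacle is exactly this last bookkeeping: each required dual identity is not simply the transpose of a single original condition but a linear combination of the transposed \eqref{c2}--\eqref{c4} together with \eqref{eq:rep-property1}, and one must track carefully both the $\alpha$-twists and the $(\beta^{-2})^*$-corrections (equivalently, the accumulated powers of $\tilde\beta$) so that the operator words and the $\tfrac13$-coefficients agree simultaneously. Since only $\beta$, and not necessarily $\alpha$, is assumed invertible, every manipulation must be arranged so as to avoid $\alpha^{-1}$, applying the conjugation relations only in the direction in which $\alpha$ naturally appears.
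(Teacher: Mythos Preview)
Your approach is essentially the same as the paper's: verify the multiplicativity conditions by the conjugation relation coming from \eqref{condmultipli1}--\eqref{condmultipli2}, then obtain the remaining axioms by transposing the original representation identities, invoking Lemma~\ref{lem:rep-property1} where needed. The paper carries this out by an explicit pairing computation and only displays one identity (the dual analogue of \eqref{eq:rep-property1}) as a sample, whereas you plan to check \eqref{c2}--\eqref{c4} systematically; your reformulation $\frkl^{\c}(x)=\tilde\beta\,\frkl^*(x)\,\tilde\beta$ is a tidy way to organise the same bookkeeping and makes the avoidance of $\alpha^{-1}$ transparent.
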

\begin{proof}
Let $(V, \rho, \mu, \beta)$ be a representation of the admissible Hom-Poisson algebra $(A,\diamond,\alpha)$.
Our aim is to prove that $(V^*,-\frkr^{\c},-\frkl^{\c},(\beta^{-1}))$ is a representation of $(A,\diamond,\alpha)$\\
To begin with, we have $\forall x\in A,\xi\in V^*$,
\begin{align*}
&\frkr^{\c}(\alpha(x))(\beta^{-1})^{*}(\xi)=\frkr^{*}(\alpha^{2}(x))(\beta^{-3})^{*}(\xi)\\
&=\frkr^{*}(\alpha(\alpha(x)))(\beta^{-2})^{*}(\beta^{-1})^{*}(\xi)\\
&=(\beta^{-1})^{*}\frkr^{*}(\alpha(x))(\beta^{-2})^{*}(\xi)\\
&=(\beta^{-1})^{*}\frkr^{\c}(x)\xi.
\end{align*}
same as for $\frkl^{\c}$ (i.e.$\frkl^{\c}(\alpha(x))\xi:=(\beta^{-1})^{*}\frkl^{\c}(x)\xi$).\\
%Next we shall prove that the equation \ref{eq:rep-property1} in Lemma \ref{lem:rep-property1} hold for $\frkl=-\frkr^{\c}, \frkr=-\frkl^{\c}$.\\
We have:
$\frkl(x\diamond y)\circ\beta=-\frkr(\alpha(x)) \frkr(y)+\frkl(\alpha(x))\frkl(y)+\frkr(y\diamond x)\circ \beta,\;\;\forall x,y\in
    A$\\
    Similarly we  have
$\frkr(x\diamond y)\circ \beta=\frkl(y\diamond x)\circ\beta+\frkr(\alpha(y)) \frkr(x)-\frkl(\alpha(y))\frkl(x)$.\\
So our aim becomes to prove the following identity
$$-\frkr^{\c}(x\diamond y)(\beta^{-1})^{*}=-\frkl^{\c}(y\diamond x)(\beta^{-1})^{*}-\frkr^{\c}(\alpha(x))\frkr^{\c}(y)+\frkl^{\c}(\alpha(x))\frkl^{\c}(y),$$
we have
\begin{eqnarray*}
&&\langle-\frkr^{\c}(x\diamond y)(\beta^{*})^{-1},v\rangle\\&=&\langle-\frkr^{*}(\alpha(x\diamond y)(\beta^{*})^{-3},v\rangle\\
&=&\langle (\beta^{-3})^{*},\frkr(\alpha(x\diamond y)\rangle\\
&=&\langle (\beta^{-3})^{*},\frkl(\alpha^2(y\diamond x))
+\frkr(\alpha^{2}(y))\frkr(\alpha(x))\beta^{-1}-\frkl(\alpha^{2}(y))\frkl(\alpha(x))\beta^{-1})v\rangle\\
&=&\langle (\beta^{-4})^*,\frkl(\alpha^{2}(y\diamond x))\beta+\frkr(\alpha^{3}(y))\frkr(\alpha^2(x))-\frkl(\alpha^{3}(y))\frkl(\alpha^2(x))v\rangle\\
&=&\langle (-\frkl^{*}(\alpha^{2}(y\diamond x))(\beta^{-1})^*-\frkr^{*}(\alpha^2(x))\frkr^{*}(\alpha^{3}(y))+\frkl^{*}(\alpha^2(x))\frkl^{*}(\alpha^{3}(y)))(\beta^{-4})^*,v\rangle\\
&=&\langle -\frkl^{\c}(y\diamond x)(\beta^{-1})^{*}-\frkr^{\c}(\alpha(x))\frkr^{\c}(y)+\frkl^{\c}(\alpha(x))\frkl^{\c}(y),v\rangle.
\end{eqnarray*}
Hence the proof is achieved.
\end{proof}
\begin{ex}
Let $(A,\diamond,\alpha)$ be an admissible Hom-Poisson algebra.
Let $L,R$ denote the left and right multiplication operators
$L(x)y=xy, R(x)y=yx, \forall x,y\in A$. $(A,L,R,\alpha_{A})$ is a representation of $(A,\diamond,\alpha)$ called the adjoint representation. Furthermore, $(A^*,-R^*,-L^*,\alpha_{A})$ is a representation called the coadjoint representation of $A$.
\end{ex}
\subsection{Matched pairs of admissible Hom-Poisson algebras}
\begin{defi}
 Let $(A_1,\diamond_{1},\alpha_{1})$ and $(A_2,\diamond_{2},\alpha_{2})$ be two admissible Hom-Poisson algebras and consider the maps $\frkl_{1},\frkr_{1}:A_1\longrightarrow \rm End(A_2), \quad\frkl_{2},\frkr_{2}:A_2\longrightarrow \rm End(A_1)$ \\
$(A_1,A_2,\frkl_{1},\frkr_{1},\frkl_{2},\frkr_{2})$ is called a matched pair of admissible Hom-Poisson algebras if
$(A_2,\frkl_{1},\frkr_{1},\alpha_2)$ is a representation of
$(A_1,\diamond_{1},\alpha_{1})$ and  $(A_1,\frkl_{2},\frkr_{2},\alpha_1)$ is a representation of $(A_2,\diamond_{2},\alpha_{2})$ and the following equalities hold

{\small\begin{align}
\frkr_2(\alpha_{2}(a))(x\diamond_1y)=&\frkr_2(\frkl_1(y)a)(\alpha_{1}(x))+\alpha_{1}(x)\diamond_1(\frkr_2(a)y)+\frac{1}{3}\Big(\frkr_2(\frkr_1(y)a)(\alpha_{1}(x))+\alpha_{1}(x)\diamond_1(\frkl_2(a)y) \nonumber \\
&-\frkl_2(a)(\alpha_{1}(x)\diamond_1y)-\alpha_{1}(y)\diamond_1(\frkr_2(a)x)-\frkr_2(\frkl_1(x)a)(\alpha_{1}(y))+\alpha_{1}(y)\diamond_1(\frkl_2(a)x)\nonumber \\
&+\frkr_2(\frkr_1(x)a)\alpha_{1}(y)\Big),\label{Defmatch1}\\
\frkl_2(\alpha_{2}(a))(x\diamond_1y)=&(\frkl_2(a)x)\diamond_1\alpha_{1}(y)+\frkl_2(\frkr_1(x)a)(\alpha_{1}(y))+\frac{1}{3}\Big(-\frkl_2(a)(\alpha_{1}(y)\diamond_1x)+\alpha_{1}(y)\diamond_1(\frkl_2(a)x) \nonumber \\
&+\frkr_2(\frkr_1(x)a)(\alpha_{1}(y))+\alpha_{1}(x)\diamond_1(\frkl_2(a)y)+\frkr_2(\frkr_1(y)a)(\alpha_{1}(x))-\alpha_{1}(x)\diamond_1(\frkr_2(a)y)\nonumber \\
&-\frkr_2(\frkl_1(y)a)(\alpha_{1}(x))\Big),\label{Defmatch2}\\
(\frkr_2(a)x)\diamond_1\alpha_{1}(y)=&-\frkl_2(\frkl_1(x)a)(\alpha_{1}(y))+\alpha_{1}(x)\diamond_1(\frkl_2(a)y)+\frkr_2(\frkr_1(y)a)(\alpha_{1}(x))+\frac{1}{3}\Big(\alpha_{1}(x)\diamond_1(\frkr_2(a)y)\nonumber \\
&+\frkr_2(\frkl_1(y)a)(\alpha_{1}(x))-\alpha_{1}(y)\diamond_1(\frkr_2(a)x)-\frkr_2(\frkl_1(x)a)(\alpha_{1}(y))-\frkl_2(\alpha_{2}(a))(x\diamond_1y)\nonumber \\
&+\frkl_2(\alpha_{2}(a))(y\diamond_1x)\Big),\label{Defmatch3}\\
\frkr_1(\alpha_{1}(x))(a\diamond_2b)=&\frkr_1(\frkl_2(b)x)(\alpha_{2}(a))+\alpha_{2}(a)\diamond_2(\frkr_1(x)b)+\frac{1}{3}\Big(\frkr_1(\frkr_2(b)x)(\alpha_{2}(a))+\alpha_{2}(a)\diamond_2(\frkl_1(x)b) \nonumber \\
&-\frkl_1(x)(\alpha_{2}(a)\diamond_2b)-\alpha_{2}(b)\diamond_2(\frkr_1(x)a)-\frkr_1(\frkl_2(a)x)(\alpha_{2}(b)+\alpha_{2}(b)\diamond_2(\frkl_1(x)a)\nonumber \\
&+\frkr_1(\frkr_2(a)x)(\alpha_{2}(b))\Big),\label{Defmatch4}\\
\frkl_1(\alpha_{1}(x))(a\diamond_2b)=&(\frkl_1(x)a)\diamond_2\alpha_{2}(b)+\frkl_1(\frkr_2(a)x)(\alpha_{2}(b))+\frac{1}{3}\Big(-\frkl_1(\alpha_{1}(x))(b\diamond_2a)+\alpha_{2}(b)\diamond_2(\frkl_1(x)a) \nonumber \\
&+\frkr_1(\frkr_2(a)x)(\alpha_{2}(b))+\alpha_{2}(a)\diamond_2(\frkl_1(x)b)+\frkr_1(\frkr_2(b)x)(\alpha_{2}(a))-\alpha_{2}(a)\diamond_2(\frkr_1(x)b)\nonumber \\
&-\frkr_1(\frkl_2(b)x)(\alpha_{2}(a))\Big),\label{Defmatch5}\\
(\frkr_1(x)a)\diamond_2\alpha_{2}(b)=&-\frkl_1(\frkl_2(a)x)(\alpha_{2}(b)) +\alpha_{2}(a)\diamond_2(\frkl_1(x)b)+\frkr_1(\frkr_2(b)x)(\alpha_{2}(a))+\frac{1}{3}\Big(\alpha_{2}(a)\diamond_2(\frkr_1(x)b)\nonumber \\
&+\frkr_1(\frkl_2(b)x)(\alpha_{2}(a))-\alpha_{2}(b)\diamond_2(\frkr_1(x)a)-\frkr_1(\frkl_2(a)x)(\alpha_{2}(b))-\frkl_1(\alpha_{1}(x))(a\diamond_2b)\nonumber \\
&+\frkl_1(\alpha_{1}(x))(b\diamond_2a)\Big).\label{Defmatch6}
\end{align}}
\end{defi}
By a straightforward calcultaion we get the following
\begin{pro}\label{pro:matched pair}
Let $(A_1,\diamond_1,\alpha_1)$ and $(A_2,\diamond_2,\alpha_2)$ be two admissible Hom-Poisson
algebras,$\frkl_1,\frkr_1:A_1\to {\rm End}_{\mathbb F}(A_2)$,
$\frkl_2,\frkr_2:A_2\to {\rm End}_{\mathbb F}(A_1)$ be four linear maps.
Define a bilinear operation $\diamond:(A_1\oplus A_2)\otimes
(A_1\oplus A_2)\to A_1\oplus A_2$ on $A_1\oplus A_2$ and a linear map $(\alpha_1\oplus \alpha_2)$ by
\begin{align}
&(x+a)\diamond(y+b)=x\diamond_1y+\frkr_2(b)x
+\frkl_2(a)y+\frkl_1(x)b+\frkr_1(y)a+a\diamond_2b,\\
&(\alpha_1\oplus \alpha_2)(x+a)=\alpha_1(x)+\alpha_2(a)
\end{align}
where $x,y\in A_1,a,b\in A_2.$Then
$(A_1\oplus A_2,\diamond,\alpha_1\oplus \alpha_2)$ is a matched pair of admissible Hom-Poisson algebras if and only if $(A_1,A_2,\frkl_1,\frkr_1,\frkl_{2},\frkr_{2},\alpha_1,\alpha_2)$ is a matched pair of admissible Hom-Poisson algebras. We denote it by $A_1\bowtie A_2$.
\end{pro}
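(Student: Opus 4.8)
The plan is to recognize this as the standard ``matched pair $\Leftrightarrow$ direct-sum structure'' correspondence, so that the whole statement reduces to expanding the admissible Hom-Poisson axiom \eqref{c1} for the candidate algebra $(A_1\oplus A_2,\diamond,\alpha_1\oplus\alpha_2)$ and reading off its homogeneous components. Write $\bar\alpha:=\alpha_1\oplus\alpha_2$ and, for $X=x+a$, $Y=y+b$, $Z=z+c$ with $x,y,z\in A_1$, $a,b,c\in A_2$, set
\[
\mathcal F(X,Y,Z):=(X\diamond Y)\diamond\bar\alpha(Z)-\bar\alpha(X)\diamond(Y\diamond Z)+\tfrac13\big(-\bar\alpha(X)\diamond(Z\diamond Y)+\bar\alpha(Z)\diamond(X\diamond Y)+\bar\alpha(Y)\diamond(X\diamond Z)-\bar\alpha(Y)\diamond(Z\diamond X)\big).
\]
Since $\diamond$ is bilinear and $\bar\alpha$ is linear, $\mathcal F$ is trilinear, so $\mathcal F\equiv 0$ on $A_1\oplus A_2$ if and only if it vanishes on each of the eight homogeneous triples obtained by taking every one of $X,Y,Z$ in $A_1$ or in $A_2$. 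First I would dispose of multiplicativity of $\bar\alpha$: expanding $\bar\alpha(X\diamond Y)=\bar\alpha(X)\diamond\bar\alpha(Y)$ on the two mixed pairs $(x,b)$ and $(a,y)$ produces exactly the conditions \eqref{condmultipli1} and \eqref{condmultipli2} for $(\frkl_1,\frkr_1)$ and for $(\frkl_2,\frkr_2)$, while the homogeneous pairs return the multiplicativity of $\alpha_1$ and $\alpha_2$.

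Next I would treat the cubic identity $\mathcal F\equiv0$ case by case. The two ``pure'' triples $(x,y,z)$ and $(a,b,c)$ reproduce verbatim the admissible Hom-Poisson axioms of $(A_1,\diamond_1,\alpha_1)$ and $(A_2,\diamond_2,\alpha_2)$, which hold by hypothesis. For each of the six genuinely mixed triples I would substitute the definition of $\diamond$, collect terms, and project onto $A_1$ and onto $A_2$. My expectation, which is what must be verified, is that the three triples with two entries in $A_1$ and one in $A_2$ split as follows: their $A_2$-components assemble into the representation axioms \eqref{c2}--\eqref{c4} for $(A_2,\frkl_1,\frkr_1,\alpha_2)$ acting on $A_1$ (so they hold precisely because $A_2$ is a representation of $A_1$), while their $A_1$-components are exactly the compatibility equations \eqref{Defmatch1}, \eqref{Defmatch2}, \eqref{Defmatch3}. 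By the evident symmetry under swapping the roles of $A_1$ and $A_2$, the three triples with two entries in $A_2$ give, on their $A_1$-components, the representation axioms for $(A_1,\frkl_2,\frkr_2,\alpha_1)$ over $A_2$, and on their $A_2$-components the remaining compatibilities \eqref{Defmatch4}, \eqref{Defmatch5}, \eqref{Defmatch6}. The earlier semi-direct product proposition already certifies the purely representation-theoretic pieces, so only the compatibility projections are new content.

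Collecting these equivalences proves both directions simultaneously: multiplicativity of $\bar\alpha$ together with $\mathcal F\equiv0$ holds if and only if each $A_i$ is admissible Hom-Poisson, each of $(A_2,\frkl_1,\frkr_1,\alpha_2)$ and $(A_1,\frkl_2,\frkr_2,\alpha_1)$ is a representation, and the six equations \eqref{Defmatch1}--\eqref{Defmatch6} hold; this is exactly the assertion that $(A_1,A_2,\frkl_1,\frkr_1,\frkl_2,\frkr_2,\alpha_1,\alpha_2)$ is a matched pair. The hard part will be purely the bookkeeping in the six mixed cases: each expansion produces on the order of a dozen terms involving $\diamond_1,\diamond_2$ and the four action maps, and the delicate point is confirming that the $-\tfrac13$ coefficients redistribute so that each projected component matches the corresponding defining equation sign-for-sign. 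A practical safeguard is to use Lemma \ref{lem:rep-property1} (equation \eqref{eq:rep-property1}) and the identity \eqref{PoissAsso} to normalize the collected terms before comparing, and to exploit the $A_1\leftrightarrow A_2$ symmetry so that only three of the six mixed triples need to be expanded in full.
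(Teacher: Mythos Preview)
Your proposal is correct and follows exactly the standard route the paper has in mind: the paper provides no explicit proof of this proposition, stating only ``By a straightforward calculation we get the following'', and your outline---expand the axiom \eqref{c1} on $A_1\oplus A_2$, use trilinearity to reduce to the eight homogeneous triples, and project each mixed case onto $A_1$ and $A_2$---is precisely that calculation. One small slip of phrasing: when you write ``$(A_2,\frkl_1,\frkr_1,\alpha_2)$ acting on $A_1$'' you mean that $A_1$ acts on $A_2$ via $(\frkl_1,\frkr_1)$; otherwise the bookkeeping plan is sound.
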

Notice the semi-direct product of an admissible Hom-Poisson algebra
$(A_1,\diamond,\alpha_{1})$ by a representation $(V,\frkl,\frkr,\beta)$ is a special case of matched pair of admissible Hom-Poisson algebras
when $A_2=V$ equipped with the zero multiplication.
\section{Admissible Hom-Poisson Bialgebras}\label{BI}
In this section we consider Manin triples of admissible Hom-Poisson algebras and bialgebras. The equivalence between them is intrepreted in terms of matched pairs of admissible Hom-Poisson algebras.
\begin{pro}
  Let $(A,\diamond,\alpha_{A})$ be an admissible Hom-Poisson algebra. If there exists a nondegenerate symmetric invariant bilinear form $B$ on $A$, then the two representations $(A,L,R,\alpha_{A})$ and $(A^*,-R^*,-L^*,\alpha_{A^*})$ of the admissible Hom-Poisson $(A,\diamond,\alpha)$ are equivalent. Conversely, if the two representations  $(A,L,R,\alpha_{A})$ and $(A^*,-R^*,-L^*,\alpha_{A^*})$ of the admissible Hom-Poisson $(A,\diamond,\alpha)$ are equivalent then there exists a non degenerate invariant bilinear form $B$ on $A$.
\end{pro}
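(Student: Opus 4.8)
The strategy is the standard self-dual-module correspondence: a nondegenerate $B$ is nothing but an isomorphism $A\cong A^*$, and invariance of $B$ is precisely the statement that this isomorphism intertwines the adjoint action $(L,R,\alpha_A)$ with the coadjoint action $(-R^*,-L^*,\alpha_{A^*})$. Concretely, for the first implication I would define $\varphi:A\to A^*$ by $\langle\varphi(x),y\rangle=B(x,y)$ for all $x,y\in A$. Nondegeneracy of $B$ says exactly that $\varphi$ is bijective, so it is a linear isomorphism and only the three intertwining identities of \eqref{lem:rep1} remain to be checked, with $(\frkl_1,\frkr_1,\beta_1)=(L,R,\alpha_A)$ and $(\frkl_2,\frkr_2,\beta_2)=(-R^*,-L^*,\alpha_{A^*})$.

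I would verify these by pairing against an arbitrary $z\in A$. The $\frkr$-identity $\varphi\circ R(x)=-L^*(x)\circ\varphi$, evaluated at $y$ and paired with $z$, reads $B(y\diamond x,z)=B(y,x\diamond z)$ once the signs are unwound; this is exactly the invariance of $B$. The $\frkl$-identity $\varphi\circ L(x)=-R^*(x)\circ\varphi$ unwinds to $B(x\diamond y,z)=B(y,z\diamond x)$, which I would obtain from invariance together with symmetry via the chain $B(x\diamond y,z)=B(z,x\diamond y)=B(z\diamond x,y)=B(y,z\diamond x)$. Finally, the $\beta$-identity $\varphi\circ\alpha_A=\alpha_{A^*}\circ\varphi$ collapses to the $\alpha$-compatibility $B(\alpha_A(x),y)=B(x,\alpha_A(y))$, which is part of the notion of invariant form in the Hom-setting. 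Hence $\varphi$ is an equivalence of representations.

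For the converse I would run the same computation in reverse. Given an equivalence $\varphi:A\to A^*$ satisfying \eqref{lem:rep1}, set $B(x,y):=\langle\varphi(x),y\rangle$; nondegeneracy of $B$ is immediate from the bijectivity of $\varphi$. The $\frkr$-identity then returns $B(y\diamond x,z)=B(y,x\diamond z)$, i.e. $B$ is invariant, and the $\beta$-identity returns the $\alpha$-compatibility of $B$. Since the statement asserts only that the resulting $B$ is nondegenerate and invariant (not symmetric), nothing further is required; in particular I would not attempt to establish symmetry in this direction.

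The only genuine difficulty is sign and weight bookkeeping. One must fix once and for all whether $L^*,R^*$ denote the plain transpose or the sign-carrying dual of convention~(3), and what the induced structure map $\alpha_{A^*}$ on $A^*$ is, because a misplaced sign would interchange the roles of $\frkl$ and $\frkr$ and a misplaced power of $\alpha$ would spoil the $\beta$-intertwining. Once the conventions are pinned down the computations are routine pairings; the single step that genuinely uses both hypotheses on $B$ is the derivation of the $\frkl$-identity $B(x\diamond y,z)=B(y,z\diamond x)$ from invariance together with symmetry.
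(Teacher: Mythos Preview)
The paper does not supply a proof for this proposition; it is stated and immediately followed by the next definition. Your approach via the musical isomorphism $\varphi(x)=B(x,\cdot)$ is exactly the standard argument one would expect here, and your pairing computations for the $\frkl$- and $\frkr$-intertwinings are correct under convention~(3). Your observation that symmetry of $B$ is what bridges the invariance identity $B(y\diamond x,z)=B(y,x\diamond z)$ to the second intertwining $B(x\diamond y,z)=B(y,z\diamond x)$ is the key point, and you have it right.

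The caveat you raise about conventions is not merely cosmetic here: the paper is internally inconsistent. The example preceding this proposition writes the coadjoint representation as $(A^*,-R^*,-L^*,\alpha_A)$, the proposition itself writes $\alpha_{A^*}$, and the general dual-representation result just above uses the twisted maps $-\frkr^{\c},-\frkl^{\c}$ with structure map $(\beta^{-1})^*$; later, $\alpha_{A^*}$ is set equal to $(\alpha^{-1})^*$. Under that last reading your $\beta$-intertwining becomes $B(\alpha(x),\alpha(y))=B(x,y)$ rather than $B(\alpha(x),y)=B(x,\alpha(y))$, so the precise $\alpha$-compatibility you need from ``invariant'' depends on which convention is in force. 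You were right to flag this as the only genuine hazard; once a convention is fixed, your argument goes through verbatim.
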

\begin{defi}
Let $(A,\diamond_{A},\alpha_{A})$ be an admissible Hom-Poisson algebra and suppose that $(A^*,\diamond_{A^*},\alpha_{A^*})$ is an admissible Hom-Poisson algebra structure on the dual space $A^*$. If there exists an admissible Hom-Poisson algebra structure on the direct sum $A\oplus A^*$ of the underlying vector spaces $A$ and $A^*$ such that  $(A,\diamond_{A},\alpha_{A})$ and  $(A^*,\diamond_{A^*},\alpha_{A^*})$ are
admissible Hom-Poisson subalgebras and the following symmetric bilinear form $B_d$ on $A\oplus A^*$ by
\begin{equation}\label{bilinearform}
B_d(x+a^*,y+b^*)=\langle x,b^*\rangle +\langle y,a^*\rangle, \forall x,y\in A, a^*,b^*\in A^*
\end{equation}
 is invariant, then, $(A\oplus A^*,A,A^*)$ is called a standard Manin triple of admissible Hom-Poisson algebras assocaited to $B_d$ defined by \eqref{bilinearform}.
\end{defi}
\begin{pro}
 Let $(A,\diamond_{A},\alpha_{A})$ be an admissible Hom-Poisson algebra.  Suppose that $(A^*,\diamond_{A^*},\alpha_{A}^*)$ is an admissible Hom-Poisson algebra such that the Hom-admissible structure $"\diamond_{A^*}"$ on the dual space is given by a linear map $\Delta^*:A^*\otimes A^*\longrightarrow A^*$. Then $(A\oplus A^*,A,A^*,\alpha_{A}\oplus\alpha^*_{A^*})$ is a standard Manin triple of admissible Hom-Poisson algebras associated to $B_d$ if and only if  $(A,A^*,-R_A^{\star},-L_A^{\star},-R_{A^*}^{\star},-L_{A^*}^{\star},\alpha,\alpha^*)$ is a matched pair of admissible Hom-Poisson algebras.
\end{pro}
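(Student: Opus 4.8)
The plan is to establish the two implications separately, in each case isolating the algebra structure on $A\oplus A^*$ (handled by Proposition \ref{pro:matched pair}) from the invariance of $B_d$, which is the extra condition that singles out the coadjoint representations. Concretely, being a standard Manin triple means two things: that $A\oplus A^*$ carries an admissible Hom-Poisson structure with $A$ and $A^*$ as subalgebras and $\alpha_A\oplus\alpha^*_{A^*}$ as structure map, and that $B_d$ from \eqref{bilinearform} is invariant. The first part is exactly the content of Proposition \ref{pro:matched pair} once the cross actions are named; the whole proof therefore reduces to showing that imposing invariance of $B_d$ is the same as forcing those cross actions to be $-R_A^{\star},-L_A^{\star},-R_{A^*}^{\star},-L_{A^*}^{\star}$.

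For the direction from matched pair to Manin triple, I would start from the hypothesis that $(A,A^*,-R_A^{\star},-L_A^{\star},-R_{A^*}^{\star},-L_{A^*}^{\star},\alpha,\alpha^*)$ is a matched pair. Proposition \ref{pro:matched pair} then equips $A\oplus A^*$ with the bowtie multiplication
\[
(x+a^*)\diamond(y+b^*)=x\diamond_A y-L_{A^*}^{\star}(b^*)x-R_{A^*}^{\star}(a^*)y-R_A^{\star}(x)b^*-L_A^{\star}(y)a^*+a^*\diamond_{A^*}b^*,
\]
which is an admissible Hom-Poisson algebra containing $A$ and $A^*$ as subalgebras. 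It then remains only to verify that $B_d$ is invariant. I would expand $B_d\big((x+a^*)\diamond(y+b^*),(\alpha_A\oplus\alpha^*_{A^*})(z+c^*)\big)$ and the corresponding right-hand expression into their mixed components; since $A$ and $A^*$ are isotropic for $B_d$, only the terms pairing $A$ with $A^*$ survive, and these cancel in matched pairs precisely because $-R_A^{\star}$ and $-R_{A^*}^{\star}$ (resp. $-L_A^{\star}$ and $-L_{A^*}^{\star}$) are built as mutual duals through the defining relations of the $\c$-twisted dual maps $\frkl^{\c},\frkr^{\c}$.

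For the converse, suppose $A\oplus A^*$ is a standard Manin triple associated to $B_d$. Because $A$ and $A^*$ are subalgebras, the multiplication restricts to $\diamond_A$ and $\diamond_{A^*}$, and I would decompose the remaining products along $A\oplus A^*$, writing $x\diamond a^*=\frkr_2(a^*)x+\frkl_1(x)a^*$ and $a^*\diamond x=\frkl_2(a^*)x+\frkr_1(x)a^*$ with the first summand in $A$ and the second in $A^*$. This produces four linear maps which, by Proposition \ref{pro:matched pair}, already form a matched pair. The remaining task is to identify them: feeding triples of the form $(x,y,c^*)$ and $(a^*,b^*,z)$ into the invariance identity for $B_d$ and reading off the pairing yields relations such as $\langle x\diamond_A y,\alpha^*(c^*)\rangle=\langle\alpha(x),\frkl_1(y)c^*\rangle$, which (using multiplicativity and nondegeneracy of the pairing) pin $\frkl_1$ down as a twisted dual of $R_A$, and symmetrically for the other three maps; this identifies them with $-R_A^{\star},-L_A^{\star},-R_{A^*}^{\star},-L_{A^*}^{\star}$.

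The main obstacle is the bookkeeping of the Hom-twists. In the non-Hom case the invariance computation immediately returns the naive duals $-R_A^{*},-L_A^{*}$; here the structure map intervenes, and the factors of $\alpha$, together with the $(\beta^{-2})^{*}=(\alpha^{-2})^{*}$ coming from the $\c$-twisted dual of the adjoint representation, must be tracked so that the identity produced by invariance is exactly $\frkl^{\c}(x)\xi=\frkl^{*}(\alpha(x))((\beta^{-2})^{*}\xi)$ rather than the untwisted $\frkl^{*}$. Keeping the powers of $\alpha$ synchronized between the two sides of the invariance relation, and exploiting that $B_d$ is compatible with $\alpha_A\oplus\alpha^*_{A^*}$, is the delicate point; once these twists are reconciled, both implications close by the componentwise comparison described above.
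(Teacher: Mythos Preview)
The paper states this proposition without supplying a proof, so there is no argument in the text to compare against. Your plan is the standard one for results of this type and is sound: Proposition~\ref{pro:matched pair} handles the equivalence between ``matched pair'' and ``admissible Hom-Poisson structure on $A\oplus A^*$ with $A,A^*$ as subalgebras'', while the invariance of $B_d$ is precisely what forces the four cross actions to be the twisted coadjoint maps $-R_A^{\star},-L_A^{\star},-R_{A^*}^{\star},-L_{A^*}^{\star}$. Your identification of the Hom-twist bookkeeping (the $(\alpha^{-2})^*$ factors in $\frkl^{\c},\frkr^{\c}$) as the only delicate point is accurate; once those powers of $\alpha$ are tracked through the invariance identity, the componentwise comparison closes both implications as you describe.
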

%\begin{proof}
%To begin with we have
%\begin{description}
%\item $L_{A}^{\star}(x)a=L_{A}^{*}(\alpha(x))(\alpha^{-2})^{*}(a)$
%\item $ R_{A}^{\star}(x)a=R_{A}^{*}%(\alpha(x))(\alpha^{-2})^{*}(a) $
%\item $L_{A^*}^{\star}(x)a=L_{A^*}^{*}%((\alpha^{-1})^*(x))\alpha^{2}(a)$
%\item $R_{A^*}^{\star}(x)a=R_{A^*}^{*}%((\alpha^{-1})^*(x))\alpha^{2}(a)$
%\end{description}
%"$\Rightarrow$"
%Let $(A,A^*,-R_A^{\star},-L_A^{\star},-%R_{A^*}^{\star}, -%L_{A^*}^{\star},\alpha,%(\alpha^{-1})^*)$ be a matched pair of %admissible Hom-Poisson algebras

%\end{proof}
\begin{thm}
Let $(A,\diamond,\alpha)$ be an admissible Hom-Poisson algebra and suppose that there exists an admissible Hom-Poisson algebra structure $\c_{A^*}$ on the dual space $A^*$ given by the linear map $\Delta^*:A^*\otimes A^*\longrightarrow A^*$.Then $(A,A^*,-R_A^*,-L_A^*,-R_{A^*}^*,-L_{A^*},\alpha_{A},\alpha_{A^*})$ is a matched pair of admissible Hom-Poisson algebras if and only if $\Delta$ satisfies the following equations:
{\small\begin{align}
&\Delta(x\diamond_A y)-
 (R_A(\alpha^{-1}(y))\otimes {\rm
 \alpha})\Delta(x)-({\rm \alpha}\otimes L_A(\alpha^{-1}(x)))\Delta(y)\nonumber\\
    =&\frac{1}{3}\big((L_A(\alpha^{-1}(y))\otimes {\rm \alpha})\Delta(x)-({\rm \alpha}\otimes L_A(\alpha^{-1}(y)))\Delta(x)+(L_A(\alpha^{-1}(x))\otimes {\rm \alpha})\Delta(y)-(R_A(\alpha^{-1}(x))\otimes {\rm \alpha})\Delta(y)\nonumber\\
    &+\tau(-\Delta(x\diamond_A y)+(L_A(\alpha^{-1}(x))\otimes {\rm \alpha})\Delta(y)+(L_A(\alpha^{-1}(y))\otimes {\rm \alpha})\Delta(x))\big),\label{Defbi1}\\
    &\Delta(x\diamond_A y)-(R_A(\alpha^{-1}(x))\otimes {\rm \alpha})\Delta(y)-({\rm \alpha}\otimes L_A(\alpha^{-1}(y)))\Delta(x)\nonumber \\
    =&\frac{1}{3}\big(\tau((L_A(\alpha^{-1}(x))\otimes {\rm \alpha})\Delta(y)+(L_A(\alpha^{-1}(y))\otimes {\rm \alpha})\Delta(x)-(R_A(\alpha^{-1}(y))\otimes {\rm \alpha})\Delta(x)-({\rm \alpha}\otimes L_A(\alpha^{-1}(x)))\Delta(y))\nonumber \\
    &-\Delta(y\diamond_A x)+(L_A(\alpha^{-1}(x))\otimes {\rm \alpha})\Delta(y)+(L_A(\alpha^{-1}(y))\otimes {\rm \alpha})\Delta(x)\big) ,\label{Defbi2}\\
    &(({\rm \alpha}\otimes R_A(y))\Delta(x)-(L_A(\alpha^{-1}(y))\otimes {\rm \alpha})\Delta(x)+\tau\left(({\rm \alpha}\otimes R_A(\alpha^{-1}(x)))\Delta(y)-(L_A(\alpha^{-1}(x))\otimes {\rm \alpha})\Delta(y)\right)\nonumber \\
    =&\frac{1}{3}\big((R_A(\alpha^{-1}(y))\otimes {\rm \alpha})\Delta(x)-({\rm \alpha}\otimes L_A(\alpha^{-1}(y))\Delta(x)+({\rm \alpha}\otimes L_A(\alpha^{-1}(x))\Delta(y)-(R_A(\alpha^{-1}(x))\otimes {\rm \alpha})\Delta(y)\nonumber \\
    &+\tau(\Delta(y\diamond_A x)-\Delta(x\diamond_A y))\big)\label{Defbi3}
\end{align}}
for all $x,y\in A$.
\end{thm}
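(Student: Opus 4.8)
The plan is to insert the four coadjoint-type representation maps $\frkl_1=-R_A^*$, $\frkr_1=-L_A^*$, $\frkl_2=-R_{A^*}^*$, $\frkr_2=-L_{A^*}^*$ into the six matched-pair axioms \eqref{Defmatch1}--\eqref{Defmatch6} and to convert each resulting operator identity into a tensor identity for $\Delta$ by pairing against test elements. The translation rests on a single dictionary. Since $\diamond_{A^*}$ is given by $\Delta^*$, for $z\in A$ and $a,b\in A^*$ one has $\langle z, a\diamond_{A^*}b\rangle=\langle \Delta(z),a\otimes b\rangle$, so that $L_{A^*}(a)^*$ contracts $\Delta$ against $a$ in the first slot and $R_{A^*}(a)^*$ contracts against $a$ in the second slot; dually, $R_A^*(x)$ and $L_A^*(x)$ are the transposes of right and left multiplication in $A$. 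Combined with the multiplicativity relations $L_A(\alpha x)\beta=\beta L_A(x)$, $R_A(\alpha x)\beta=\beta R_A(x)$ and their transposes for $\alpha^*$, this dictionary lets me move every $\alpha$, $\alpha^*$ and $\alpha^{-1}$ across the pairing, which is exactly where the $\alpha^{-1}$-factors in \eqref{Defbi1}--\eqref{Defbi3} originate; here the regularity of $\alpha$ is used.

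First I would fix $x,y\in A$ and pair both sides of \eqref{Defmatch1} against a generic $a\otimes b\in A^*\otimes A^*$, rewriting term by term: the $A$-products $x\diamond_1 y$ and the operators $\frkl_2(a),\frkr_2(a)$ become, under the pairing, contractions of $\Delta(x\diamond_A y)$, $\Delta(x)$, $\Delta(y)$ in the appropriate slots, while $\frkl_1,\frkr_1$ turn into $L_A,R_A$ acting on those slots via $\langle\Delta(x\diamond_A y),a\otimes b\rangle=\langle x\diamond_A y,a\diamond_{A^*}b\rangle$. Collecting the summands and encoding the interchange of tensor factors by the flip $\tau$ should reproduce \eqref{Defbi1} verbatim. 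Repeating this for \eqref{Defmatch2} and \eqref{Defmatch3} yields \eqref{Defbi2} and \eqref{Defbi3} respectively.

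It then remains to dispose of the three conditions \eqref{Defmatch4}--\eqref{Defmatch6}, which govern the $A^*$-side products $a\diamond_2 b$. These are not independent: the whole matched-pair structure, together with the symmetric form $B_d$ of \eqref{bilinearform}, is invariant under the involution that exchanges $(A,\diamond)$ with $(A^*,\diamond_{A^*})$ and simultaneously swaps the role of $\Delta$ with the structure of $\diamond$. Under this involution \eqref{Defmatch4}--\eqref{Defmatch6} are carried to \eqref{Defmatch1}--\eqref{Defmatch3}, so after the same dualization they collapse onto the very same equations \eqref{Defbi1}--\eqref{Defbi3}. Consequently the six matched-pair axioms hold if and only if $\Delta$ satisfies \eqref{Defbi1}--\eqref{Defbi3}, which settles both directions of the equivalence at once.

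The part I expect to be purely mechanical but error-prone is the term-by-term matching in the second step: each axiom carries roughly a dozen summands, and a single misplaced tensor slot, a dropped sign, or a mis-tracked $\alpha^{\pm1}$ will spoil the identification with \eqref{Defbi1}--\eqref{Defbi3}; organizing the computation by always pairing against $a\otimes b\in A^*\otimes A^*$ and reducing immediately to scalar identities keeps it manageable. The one genuinely conceptual step is justifying the redundancy of the $A^*$-side conditions through the $A\leftrightarrow A^*$ symmetry; making that involution precise, and checking that it fixes the set of axioms rather than permuting them nontrivially, is where the real care is needed.
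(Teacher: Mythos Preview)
Your plan coincides with the paper's own argument: substitute the coadjoint-type maps into the matched-pair axioms and dualize term by term against $a\otimes b\in A^*\otimes A^*$ to obtain \eqref{Defbi1}--\eqref{Defbi3}; the paper carries out exactly three sample term computations for \eqref{Defmatch1}$\Leftrightarrow$\eqref{Defbi1} and then asserts the remaining equivalences. In fact you are slightly more complete than the paper, which never mentions \eqref{Defmatch4}--\eqref{Defmatch6} at all; your $A\leftrightarrow A^*$ symmetry argument is the standard way to dispose of them, and is implicitly what the paper is relying on.
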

\begin{proof}
Let $\frkl_1=-R^{\star}_{A},\frkr_1=-L^{\star}_{A}, \frkl_2=-R^{\star}_{A^*}, \frkr_2=-L^{\star}_{A^*}, \alpha_1=\alpha_{A}=\alpha, \alpha_2=\alpha_{A^*}=(\alpha^{-1})^*$, we have
 \begin{eqnarray*}
{\rm Eq.}\ (\ref{Defmatch1})\Longleftrightarrow{\rm Eq.}\ (\ref{Defbi1}),\quad{\rm Eq.}\ (\ref{Defmatch2})\Longleftrightarrow{\rm Eq.}\ (\ref{Defbi2}), \quad{\rm Eq.}\ (\ref{Defmatch3})\Longleftrightarrow{\rm Eq.}\
(\ref{Defbi3}).
\end{eqnarray*}
In fact, we have for the first equivalence
${\rm Eq.}\ (\ref{Defmatch1})\Longleftrightarrow{\rm Eq.} (\ref{Defbi1})$
we have
\begin{align*}
&-L_{A^{*}}^{\star}((
\alpha^{-1})^*(a))(x\diamond_{A}y)=-L_{A^{*}}^{\star}(-R^{*}_A(y)a)\alpha(x)+\alpha(x)\diamond_{A}(-L^{\star}_{A^{*}}(a)y)+\frac{1}{3}\Big(-L^{\star}_{A^{*}}(-L^{\star}_{A}(y)a)\alpha(x)\\
&+\alpha(x)\diamond_{A}(-R^{\star}_{A^{*}}(a)y)
+R^{\star}_{A^{*}}(a)(\alpha(x)\diamond_{A}y)-\alpha(y)\diamond(-L^{\star}_{A^{*}}(a)x)+L^{\star}_{A^{*}}(-R^{\star}_{A}(x)a)\alpha(y)\\
&+\alpha(y)\diamond_{A}(-R^{\star}_{A^{*}}(a)x)-L^{\star}_{A^{*}}(-L^{\star}_{A}(x)a)\alpha(y) \Big)  \end{align*}
Recall from \cite{BY} that
\begin{eqnarray*}
L_{A^*}^{\star}(x)a&=&L_{A^*}^{*}((\alpha^{-1})^*(x))\alpha^{2}(a)\\
L_{A}^{\star}(x)a&=&L_{A}^{*}(\alpha(x))(\alpha^{-2})^*(a)
\end{eqnarray*}
Then we have
\item \begin{eqnarray*}
\langle -L^{\star}_{A^*}(\alpha_{A^*}(a))(x\diamond_{A}y),b \rangle &=&\langle -L^{\star}_{A^*}((\alpha^{-1})^*(a))x\diamond_{A}y,b \rangle\\ &=&\langle-L^{*}_{A^*}((\alpha^{-2})^*(a))\alpha^{2}( x\diamond_{A}y),b \rangle\\
&=&\langle\alpha^{2}( x\diamond_{A}y), L_{A^*}((\alpha^{-2})^*(a))b\rangle\\
&=&\langle \alpha^{2}( x\diamond_{A}y),\Delta^{*}((\alpha^{-2})^*(a)\otimes b)\rangle\\
&=&\langle \Delta(x\diamond_{A}y),a\otimes (\alpha^{2})^*(b)\rangle,
\end{eqnarray*}
 \begin{eqnarray*}
\langle -L^{\star}_{A^*}(-R^{\star}_{A}(y)a)\alpha(x),b \rangle&=& \langle L^{*}_{A^*}((\alpha^{-1})^*(R^{*}_{A}\alpha(y))(\alpha^{-2})^*(a))\alpha^{3}(x),b\rangle \\
&=&\langle L^{*}_{A^*}((R^{*}_{A}(\alpha^{2}(y))(\alpha^{-3})^*(a))\alpha^{3}(x),b\rangle\\&=-&\langle \alpha^{3}(x),L_{A^*}((R^{*}_{A}(\alpha^{2}(y))(\alpha^{-3})^*(a))b\rangle
\\&=&-\langle \alpha^{3}(x),\Delta^*((R^{*}_{A}(\alpha^{2}(y))(\alpha^{-3})^*(a))\otimes b)\rangle\\&=&-\langle x,\Delta^*((R^{*}_{A}( \alpha^{-1} (y)) (a)\otimes (\alpha^{3})^*(b))\rangle\\&=&\langle \Delta(x),(R^{*}_{A}( \alpha^{-1} (y)) (a)\otimes (\alpha^{3})^*(b)\rangle
\\
&=&\langle ( R_A(\alpha^{-1}(y))\otimes \alpha)\Delta(x) ,a\otimes (\alpha^{2})^*(b)\rangle,
\end{eqnarray*}
and
\begin{eqnarray*}
\langle\alpha(x)\diamond_A(-L^{\star}_{A^*}(a))y,b \rangle &=&\langle L_A(\alpha(x))(-L^{\star}_{A^*}(a)y),b\rangle\\
&=&\langle L_{A^*}^{\star}(a)y,L^*_{A}(\alpha(x))b \rangle\\
&=&\langle L^{*}_{A^*}((\alpha^{-1})^*(a))\alpha^{2}(y),L_{A}^*(\alpha(x))b\rangle\\
&=&-\langle \alpha^{2}(y),L_{A^*}((\alpha^{-1})^*(a))(L_{A}^*(\alpha(x))b)\rangle\\
&=&-\langle y,\Delta^{*}((\alpha^2)^*(\alpha^{-1})^*(a)\otimes
(\alpha^{2})^*(L^*_{A}(\alpha(x))b))\rangle \\
&=&-\langle y,\Delta^*(\alpha^*\otimes L_{A}^*((\alpha^{-1}(x))(a\otimes (\alpha^2)^*(b)))) \rangle\\
&=&\langle(\alpha\otimes L_{A}(\alpha^{-1}(x)))\Delta(y),a\otimes (\alpha^2)^*(b) \rangle .
\end{eqnarray*}

\end{proof}
\begin{lem}\label{lem:cosp}
Let $(A,\diamond,\alpha)$ be an admissible Hom-Poisson algebra and $\Delta: A  \rightarrow A\otimes A$
be a linear map. Then the dual map $\Delta^* : A^*\otimes A^*
\rightarrow A^*$ defines an admissbible Hom-Poisson  algebra structure on
$A^*$ if and only if $\Delta$ satisfies
\begin{eqnarray}
&&(\alpha\otimes\Delta)\Delta(x)-(\Delta\otimes\alpha)\Delta(x)+\frac{1}{3}\big(({\rm id}\otimes \tau)(\alpha\otimes \Delta)\Delta(x)-(\tau\otimes{\rm id})(\alpha\otimes\Delta)\Delta(x)\nonumber\\
      &&-({\rm id}\otimes\tau)(\tau\otimes{\rm id})(\alpha\otimes\Delta)\Delta(x)+(\tau\otimes{\rm id})({\rm id}\otimes\tau)(\alpha\otimes\Delta)\Delta(x)\big)=0,\;\;\forall x\in A.\label{eq:coalgebra}
\end{eqnarray}

\end{lem}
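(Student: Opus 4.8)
The plan is to read \eqref{eq:coalgebra} as the transpose of the admissibility identity \eqref{c1} written on $A^{*}$. Saying that $\Delta^{*}$ defines an admissible Hom-Poisson structure on $A^{*}$ (with induced map $\alpha^{*}$, as dictated by the factors $\alpha$ rather than $\alpha^{-1}$ in \eqref{eq:coalgebra}) means two things: that $\alpha^{*}$ is multiplicative for $\Delta^{*}$, and that \eqref{c1} holds on $(A^{*},\Delta^{*},\alpha^{*})$. Transposing the first requirement through the pairings $\langle \Delta^{*}(a\otimes b),x\rangle=\langle a\otimes b,\Delta(x)\rangle$ and $\langle\alpha^{*}(a),x\rangle=\langle a,\alpha(x)\rangle$ shows that it is equivalent to the comultiplicativity relation $\Delta\circ\alpha=(\alpha\otimes\alpha)\circ\Delta$, which I would record as part of the standing data. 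The genuine content of the lemma is that \eqref{c1} on $A^{*}$ is equivalent to \eqref{eq:coalgebra}, and this is what I would prove by a direct dualization.

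First I would fix arbitrary $a,b,c\in A^{*}$ and $x\in A$ and evaluate, against $x$, each of the seven terms of \eqref{c1} read on $A^{*}$. Using only the two pairing identities above (applied repeatedly) and writing $\Delta(x)=\sum x_{(1)}\otimes x_{(2)}$, every term collapses to a contraction $\langle a\otimes b\otimes c,\Psi(x)\rangle$ for an explicit map $\Psi\colon A\to A\otimes A\otimes A$; note it is precisely $\langle\alpha^{*}(c),x\rangle=\langle c,\alpha(x)\rangle$ that produces the factors $\alpha$ appearing in \eqref{eq:coalgebra}. The two associator terms transpose cleanly, $(a\diamond_{A^{*}}b)\diamond_{A^{*}}\alpha^{*}(c)\mapsto(\Delta\otimes\alpha)\Delta$ and $\alpha^{*}(a)\diamond_{A^{*}}(b\diamond_{A^{*}}c)\mapsto(\alpha\otimes\Delta)\Delta$, while the four remaining terms are cyclic/transposed variants of the latter and transpose to $(\alpha\otimes\Delta)\Delta$ composed with the word in $\tau$ that permutes the three tensor slots back into the order $(a,b,c)$; explicitly $\alpha^{*}(a)\diamond_{A^{*}}(c\diamond_{A^{*}}b)\mapsto({\rm id}\otimes\tau)(\alpha\otimes\Delta)\Delta$, $\alpha^{*}(b)\diamond_{A^{*}}(a\diamond_{A^{*}}c)\mapsto(\tau\otimes{\rm id})(\alpha\otimes\Delta)\Delta$, $\alpha^{*}(c)\diamond_{A^{*}}(a\diamond_{A^{*}}b)\mapsto({\rm id}\otimes\tau)(\tau\otimes{\rm id})(\alpha\otimes\Delta)\Delta$ and $\alpha^{*}(b)\diamond_{A^{*}}(c\diamond_{A^{*}}a)\mapsto(\tau\otimes{\rm id})({\rm id}\otimes\tau)(\alpha\otimes\Delta)\Delta$.

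Finally I would reassemble these seven contributions with the coefficients and signs of \eqref{c1}. Since the pairing between $A\otimes A\otimes A$ and $(A^{*})^{\otimes 3}$ is nondegenerate and $a,b,c$ range over all of $A^{*}$, the identity \eqref{c1} on $A^{*}$ holds for every $x$ if and only if the associated element of $A\otimes A\otimes A$ vanishes for every $x$; up to an overall sign this element is exactly the left-hand side of \eqref{eq:coalgebra}, which gives the claimed equivalence. I expect the only real obstacle to be the bookkeeping: matching each permuted product on $A^{*}$ to the correct word in $\tau$ and checking that the signs and the factor $\tfrac13$ propagate correctly through the transpose. It is worth emphasizing that comultiplicativity plays no role in this equivalence—only the dual pairings do—so the multiplicativity of $\alpha^{*}$ and the cocycle-type condition \eqref{eq:coalgebra} are logically independent parts of the structure.
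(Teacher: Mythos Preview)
Your proposal is correct and follows essentially the same route as the paper: both arguments dualize termwise through the natural pairing $\langle A^{\otimes 3},(A^{*})^{\otimes 3}\rangle$, matching each of the six permuted summands in \eqref{eq:coalgebra} with the corresponding summand of \eqref{c1} written on $(A^{*},\Delta^{*},\alpha^{*})$, and then invoke nondegeneracy. Your version is in fact slightly more complete, since you separate out the comultiplicativity condition $\Delta\circ\alpha=(\alpha\otimes\alpha)\circ\Delta$ (dual to multiplicativity of $\alpha^{*}$), which the paper's proof leaves implicit.
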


\begin{proof}Let $x\in A,$ we have
\begin{align*}
\langle(\alpha\otimes\Delta)\Delta(x),a\otimes b\otimes c\rangle&=\langle\Delta(x),\alpha(a)\otimes (b\diamond c)\rangle\\
&=\langle x,\alpha(a)\diamond (b\diamond c)\rangle,
\end{align*}
\begin{align*}
\langle (\Delta\otimes\alpha)\Delta(x),a\otimes b\otimes c \rangle
&=\langle \Delta(x),(a\diamond c)\otimes \alpha(c) \rangle \\
&= \langle x,(a\diamond b)\diamond\alpha(c)  \rangle ,   
\end{align*}
\begin{align*}
\langle (id\otimes \tau)(\alpha\otimes \Delta)\Delta(x),a\otimes b\otimes c \rangle &=\langle(\alpha\otimes \Delta)\Delta(x),a\otimes c\otimes b \rangle \\&= \langle \Delta(x),\alpha(a)\otimes(c\diamond b) \rangle\\&=\langle x,\alpha(a)\diamond(c\diamond b) \rangle ,    
\end{align*}
\begin{align*}
\langle(\tau\otimes{\rm id})({\rm id}\otimes\tau)(\alpha\otimes\Delta)\Delta(x)\big),a\otimes b\otimes c\rangle&=\langle({\rm id}\otimes\tau)(\alpha\otimes\Delta)\Delta(x)\big),b\otimes a\otimes c\rangle\\&=\langle(\alpha\otimes\Delta)\Delta(x)\big),b\otimes c\otimes a\rangle\\
&=\langle x,\alpha(b)\diamond (c\diamond a)\rangle.
\end{align*}
As a result, 
\begin{align*}\Big\langle 
&\Big((\alpha\otimes\Delta)\Delta(x)-(\Delta\otimes\alpha)\Delta(x)+\frac{1}{3}\big(({\rm id}\otimes \tau)(\alpha\otimes \Delta)\Delta(x)-(\tau\otimes{\rm id})(\alpha\otimes\Delta)\Delta(x)\nonumber\\
&-({\rm id}\otimes\tau)(\tau\otimes{\rm id})(\alpha\otimes\Delta)\Delta(x)+(\tau\otimes{\rm id})({\rm id}\otimes\tau)(\alpha\otimes\Delta)\Delta(x)\Big)\Big),a\otimes b\otimes c
\Big\rangle\\
&=\Big\langle x,\alpha(a)\diamond(b\diamond c)-(a\diamond b)\diamond \alpha(c)+\frac{1}{3}\Big (\alpha(a)\diamond(c\diamond b)-\alpha(b)\diamond(a\diamond c)-\alpha(b)\diamond(c\diamond a)\\
&+\alpha(c)\diamond(a\diamond c)\Big) \Big\rangle=0\ \ (\text{By } \eqref{c1}).
\end{align*}
\end{proof}
\begin{defi}
 An admissible Hom-Poisson algebra
 $(A,\diamond,\alpha)$ equipped with
 a linear map $\Delta:A\longrightarrow A\otimes A$ satisfying \ref{eq:coalgebra}
 is called an \textbf{admissible Hom-Poisson coalgebra}.
\end{defi}
\begin{defi}
An admissible Hom-Poisson bialgebra structure on a Hom-Poisson algebra
is a linear map $\Delta:A\longrightarrow A\otimes A$
satisfying Equations: \eqref{Defbi1}, \eqref{Defbi2}, \eqref{Defbi3} and  \eqref{eq:coalgebra}
\end{defi}
We get a characterization of admissible Hom-Poisson bialgebra by matched pairs in the following theorem.
\begin{thm}
 Let $(A,\diamond,\alpha)$ be an admissible Hom-Poisson algebra. Let $\Delta:A\longrightarrow A\otimes A$ be a linear map such that $(A,\Delta,\alpha)$ is an admissible Hom-Poisson coalgebra. Then the following conditions are equivalent:
 \begin{enumerate}
\item $(A,\diamond,\Delta,\alpha)$ is an admissible Hom-Poisson bialgebra.
\item $(A,A^*,-R_A^*,-L_A^*,-R_{A^*}^*,-L_{A^*},\alpha_{A},\alpha_{A^*})$ is a matched pair of admissible Hom-Poisson algebras.
\item $(A\oplus A^*,A,A^*)$ is standard Manin triple.
    \end{enumerate}
\end{thm}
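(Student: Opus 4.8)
The plan is to prove the theorem by assembling the equivalences already established in this section, so that no new bilinear-form computation is needed. The standing hypothesis that $(A,\Delta,\alpha)$ is an admissible Hom-Poisson coalgebra plays a double role: on the one hand, by Lemma \ref{lem:cosp} it guarantees that $\Delta$ satisfies \eqref{eq:coalgebra}; on the other hand it says precisely that $\Delta^*$ makes $A^*$ into an admissible Hom-Poisson algebra, which is exactly what is required for the matched pair in (2) and the Manin triple in (3) to be well posed.

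First I would dispose of $(1)\Longleftrightarrow(2)$. By the definition of an admissible Hom-Poisson bialgebra, condition (1) asserts that $\Delta$ satisfies \eqref{Defbi1}, \eqref{Defbi2}, \eqref{Defbi3} together with \eqref{eq:coalgebra}. Since the coalgebra hypothesis already supplies \eqref{eq:coalgebra}, condition (1) is, under this standing hypothesis, equivalent to the single requirement that $\Delta$ satisfy \eqref{Defbi1}--\eqref{Defbi3}. Next I must check that the representation clauses in the definition of a matched pair hold automatically: $(A^*,-R_A^*,-L_A^*,\alpha_{A^*})$ is nothing but the coadjoint representation of $(A,\diamond,\alpha)$ recorded in the Example, while $(A,-R_{A^*}^*,-L_{A^*}^*,\alpha_A)$ is the coadjoint representation of the admissible Hom-Poisson algebra $(A^*,\diamond_{A^*},\alpha_{A^*})$ on $(A^*)^*\cong A$, which exists precisely because the coalgebra hypothesis makes $A^*$ an algebra. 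Hence being a matched pair reduces to the system \eqref{Defmatch1}--\eqref{Defmatch6}, and the preceding Theorem asserts that this system is equivalent to $\Delta$ satisfying \eqref{Defbi1}--\eqref{Defbi3}. Chaining these two reductions yields $(1)\Longleftrightarrow(2)$.

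I would then establish $(2)\Longleftrightarrow(3)$, which is immediate from the preceding Proposition. With the identifications $\frkl_1=-R_A^*$, $\frkr_1=-L_A^*$, $\frkl_2=-R_{A^*}^*$, $\frkr_2=-L_{A^*}^*$ and $\alpha_1=\alpha$, $\alpha_2=\alpha_{A^*}$, that Proposition states that $(A\oplus A^*,A,A^*,\alpha\oplus\alpha_{A^*})$ is a standard Manin triple associated to the form $B_d$ of \eqref{bilinearform} if and only if $(A,A^*,-R_A^*,-L_A^*,-R_{A^*}^*,-L_{A^*}^*,\alpha,\alpha_{A^*})$ is a matched pair. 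Since the coalgebra hypothesis guarantees that $A^*$ is an algebra, so that $B_d$ is defined and the direct-sum structure is meaningful, this is exactly $(2)\Longleftrightarrow(3)$, closing the cycle $(1)\Rightarrow(2)\Rightarrow(3)\Rightarrow(1)$.

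The essential analytic content --- converting the operator identities \eqref{Defmatch1}--\eqref{Defmatch6} into the tensor identities \eqref{Defbi1}--\eqref{Defbi3} via the dual pairing and the twisted operators $L_A^\star,R_A^\star,L_{A^*}^\star,R_{A^*}^\star$ --- has already been carried out in the preceding Theorem and need not be repeated. Consequently the only genuine point requiring care here is bookkeeping: verifying that the coalgebra hypothesis is simultaneously the missing ingredient \eqref{eq:coalgebra} in the bialgebra definition and the well-posedness condition for the matched pair and the Manin triple, and that the two representation halves of the matched pair are the coadjoint representations and hence automatic. I expect this matching of hypotheses, rather than any computation, to be the main thing to get right.
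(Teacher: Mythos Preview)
Your proposal is correct and matches the paper's approach: the paper states this theorem without proof, leaving it as an immediate consequence of the preceding Proposition (giving $(2)\Longleftrightarrow(3)$) and the preceding Theorem (giving $(1)\Longleftrightarrow(2)$ once the coalgebra hypothesis supplies \eqref{eq:coalgebra}). Your explicit bookkeeping --- checking that the coalgebra hypothesis simultaneously furnishes \eqref{eq:coalgebra} and the admissible Hom-Poisson algebra structure on $A^*$ needed to make sense of the matched pair and Manin triple --- is exactly the glue the paper leaves implicit.
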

Note that there is a one-to-one correspondance between admissible Hom-Poisson bialgebra structures and Hom-Poisson bialgebra structures.
\begin{pro}
Let $(A,\diamond,\Gamma,\alpha)$ be an admissible Hom-Poisson bialgebra. Then
$(A,[\cdot,\cdot],.,
\delta,\Delta,\alpha)$
is a Hom-Poisson bialgebra where
\begin{align*}
&\delta=\frac{1}{2}(\Gamma-\tau\Gamma);\quad \Delta=\frac{1}{2}(\Gamma+\tau\Gamma).
\end{align*}
Conversely, let $(A,[\cdot,\cdot],.,\delta,\Delta,\alpha)$ be a Hom-Poisson bialgebra. Define a linear map $$\Gamma=\delta+\Delta.$$
Then, $(A,\diamond,\Gamma,\alpha)$ is an admissible Hom-Poisson bialgebra.
\end{pro}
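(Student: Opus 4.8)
The plan is to obtain the correspondence from the algebra-level polarization--depolarization equivalence of Proposition~\ref{Yau}, applied to the dual space $A^*$ and to the double $A\oplus A^*$, rather than by expanding the bialgebra equations \eqref{Defbi1}--\eqref{Defbi3} by hand. The organizing observation is that $\Gamma=\delta+\Delta$ is precisely the splitting of $\Gamma$ into its antisymmetric part $\delta=\tfrac12(\Gamma-\tau\Gamma)$ and its symmetric part $\Delta=\tfrac12(\Gamma+\tau\Gamma)$, and that dualizing turns this into the depolarization formulas \eqref{eq:sPA-PA} on $A^*$. Writing $\diamond_*:=\Gamma^*$ for the product induced on $A^*$ and using $\tau^*=\tau$, one checks immediately that $\delta^*(a\otimes b)=\tfrac12(\Gamma^*(a\otimes b)-\Gamma^*(b\otimes a))$ is the Hom-Lie bracket $[\cdot,\cdot]_*$ and $\Delta^*(a\otimes b)=\tfrac12(\Gamma^*(a\otimes b)+\Gamma^*(b\otimes a))$ is the commutative Hom-associative product $\circ_*$ obtained from $\diamond_*$ by \eqref{eq:sPA-PA}.

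First I would settle the coalgebra level. By Lemma~\ref{lem:cosp}, $\Gamma$ satisfies \eqref{eq:coalgebra} if and only if $\diamond_*=\Gamma^*$ is an admissible Hom-Poisson product on $A^*$; applying Proposition~\ref{Yau} to $(A^*,\diamond_*,\alpha^*)$ this is equivalent to $(A^*,[\cdot,\cdot]_*,\circ_*,\alpha^*)$ being a Hom-Poisson algebra, i.e. to $(A,\delta,\Delta,\alpha)$ carrying the dual (Hom-Poisson) coalgebra structure. Thus the single coalgebra condition on $\Gamma$ is equivalent to the pair of dual conditions on $(\delta,\Delta)$ required on the Hom-Poisson side.

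Next I would transfer the bialgebra compatibility through Manin triples. The characterization Theorem above shows that $(A,\diamond,\Gamma,\alpha)$ is an admissible Hom-Poisson bialgebra if and only if $(A\oplus A^*,A,A^*)$ is a standard Manin triple of admissible Hom-Poisson algebras for the form $B_d$ of \eqref{bilinearform}. I would then apply Proposition~\ref{Yau} to the whole double $(A\oplus A^*,\diamond,\alpha\oplus\alpha^*)$, producing via \eqref{eq:sPA-PA} a Hom-Poisson structure on $A\oplus A^*$ for which $A$ and $A^*$ remain subalgebras (depolarization sends a subalgebra to a subalgebra, since $\circ$ and $[\cdot,\cdot]$ are built from $\diamond$ and conversely $\diamond=\circ+[\cdot,\cdot]$). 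Because $B_d$ is symmetric, its $\diamond$-invariance is equivalent to simultaneous invariance for $\circ$ and $[\cdot,\cdot]$; hence the standard Manin triple of admissible Hom-Poisson algebras is the very same datum as a Manin triple of Hom-Poisson algebras, which by the Manin-triple characterization of Hom-Poisson bialgebras is equivalent to $(A,[\cdot,\cdot],\circ,\delta,\Delta,\alpha)$ being a Hom-Poisson bialgebra. Reading the resulting chain of equivalences forwards gives the first assertion and, starting from $\Gamma=\delta+\Delta$ and running the identifications in reverse, the converse.

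I expect the main obstacle to be the invariance-transfer step on the double: one must verify that $B_d$-invariance for the single operation $\diamond$ decomposes cleanly into separate invariance conditions for the symmetric product $\circ$ and the antisymmetric bracket $[\cdot,\cdot]$, with no surviving cross-terms. Once this decomposition is checked, both directions follow formally from Proposition~\ref{Yau} together with the two Manin-triple characterizations.
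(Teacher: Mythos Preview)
The paper does not supply a proof of this proposition; it is stated without justification immediately before Section~5. So there is nothing to compare against, and your proposal must be assessed on its own merits.

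Your Manin-triple strategy is the natural conceptual argument and works. Concerning the step you flag as the ``main obstacle,'' the invariance transfer, there is in fact no cross-term: from $B_d(x\diamond y,z)=B_d(x,y\diamond z)$ and the same identity with $x$ and $z$ exchanged (using the symmetry of $B_d$) one obtains the pair
\[
B_d(x\circ y,z)\pm B_d([x,y],z)=B_d(x,y\circ z)\pm B_d(x,[y,z]),
\]
whose sum and difference yield the separate invariance of $B_d$ for $\circ$ and for $[\cdot,\cdot]$; the converse direction is immediate by adding the two invariances. So this step goes through cleanly.

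The one genuine gap is your appeal to ``the Manin-triple characterization of Hom-Poisson bialgebras'' in the final step. The present paper neither defines Hom-Poisson bialgebras nor proves such a characterization; the theorem you cite is established here only on the admissible side. You are therefore importing an external result (the Hom-analogue of the Poisson bialgebra theory of \cite{NB1}). If you wish the argument to be self-contained within this paper, you must either state that analogue explicitly as an assumption with a reference, or else bypass it by checking directly that applying polarization to \eqref{Defbi1}--\eqref{Defbi3} (and to \eqref{eq:coalgebra}) produces exactly the Hom-Poisson bialgebra compatibility conditions for $(\delta,\Delta)$.
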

\section{Coboundary admissible Hom-Poisson algebra}
In this section we consider a special class of admissible Hom-Poisson bialgebra that is, an admissible Hom-Poisson bialgebra$(A,\diamond,\Delta,\alpha)$.
 with $\Delta$ in the form 
 \begin{equation}\label{eq:cobo}
    \Delta(x)=({\rm \alpha}\otimes L_A(\alpha^{-2}(x))-R_A(\alpha^{-2}(x))\otimes {\rm \alpha})r,\;\; \forall x\in A.
    \end{equation}
 \begin{equation}\label{cond:r}
 (\alpha\otimes\alpha)r=r
 \end{equation}
\begin{pro}\label{pro:co1}
Let $(P,\diamond,\alpha)$ be an admissible Hom-Poisson algebra and $r\in P\otimes P$. Let
$\Delta:P\rightarrow P\otimes P$ be a linear map defined by
Eq.~(\ref{eq:cobo}). Then for all $x,y\in P$, we have
\begin{enumerate}
\item $\Delta$ satisfies Eq.~(\ref{Defbi1}) if and only if
\begin{eqnarray}
&&({\rm \alpha}\otimes L(\alpha^{-1}(x)))(L(\alpha^{-2}(y))\otimes {\rm \alpha}-{\rm \alpha}\otimes
R(\alpha^{-2}(y)))(r+\tau(r))\nonumber\\ \nonumber
  &&+({\rm \alpha}\otimes L(\alpha^{-1}(y)))(L(\alpha^{-2}(x))\otimes {\rm
  \alpha}-{\rm \alpha}\otimes R(\alpha^{-2}(x))(r+\tau(r))\\
  &&-(L(\alpha^{-2}(x\diamond y))\otimes {\rm \alpha}-{\rm \alpha}\otimes R(\alpha^{-2}(x\diamond
  y)))(r+\tau(r))=0;\label{eq:eqv1}
\end{eqnarray}
\item $\Delta$ satisfies Eq.~(\ref{Defbi2}) if and only if
\begin{eqnarray}
&&({\rm \alpha}\otimes L(\alpha^{-1}(x)))(L(\alpha^{-2}(y))\otimes {\rm \alpha}-{\rm \alpha}\otimes
R(\alpha^{-2}(y)))(r+\tau(r))\nonumber\\\nonumber
  &&+({\rm \alpha}\otimes L(\alpha^{-1}(y)))(L(\alpha^{-2}(x))\otimes {\rm
  id}-{\rm id}\otimes R(x))(r+\tau(r))\\\nonumber
  &&-(L(\alpha^{-1}(x))\otimes {\rm \alpha})(L(\alpha^{-2}(y))\otimes {\rm \alpha}-{\rm \alpha}\otimes
  R(\alpha^{-2}(y)))(r+\tau(r))\\
  &&-({\rm \alpha}\otimes R(\alpha^{-1}(y)))(L(\alpha^{-2}(x))\otimes {\rm \alpha}-{\rm \alpha}\otimes
  R(\alpha^{-2}(x)))(r+\tau(r))=0;\label{eq:eqv2}
\end{eqnarray}
\item $\Delta$ satisfies Eq.~(\ref{Defbi3}) if and only if
\begin{eqnarray}
&&(R(\alpha^{-2}(x))\otimes {\rm id}-{\rm id}\otimes L(\alpha^{-2}(x)))(L(\alpha^{-2}(y))\otimes {\rm
id}-{\rm id}\otimes R(\alpha^{-2}(y)))(r+\tau(r))\nonumber\\
  &&+\frac{1}{3}\big((L(x\diamond y)-L(y\diamond x))\otimes {\rm id}-{\rm id}\otimes(R(x\diamond y)-R(y\diamond
  x))\big)(r+\tau(r))=0.\label{eq:eqv3}
\end{eqnarray}
\end{enumerate}
\end{pro}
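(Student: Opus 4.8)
The plan is to prove the three equivalences independently, since each concerns a separate defining equation of the bialgebra. In every case the scheme is identical: insert the coboundary expression \eqref{eq:cobo} for $\Delta(x)$, $\Delta(y)$ and $\Delta(x\diamond y)$ into the relevant equation \eqref{Defbi1}, \eqref{Defbi2} or \eqref{Defbi3}, expand by bilinearity so that every summand becomes a tensor product of compositions of multiplication operators applied to $r$, and then collect terms until the stated identity \eqref{eq:eqv1}, \eqref{eq:eqv2} or \eqref{eq:eqv3} is reached. The reduction rests on three elementary facts: the multiplicativity relations $\alpha\circ L(z)=L(\alpha(z))\circ\alpha$ and $\alpha\circ R(z)=R(\alpha(z))\circ\alpha$, which hold since $\alpha$ is multiplicative; the normalization \eqref{cond:r} in the form $(\alpha\otimes\alpha)r=r$; and the commutation rule $\tau\circ(f\otimes g)=(g\otimes f)\circ\tau$ for the flip $\tau$.

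First I would rewrite each composite operator produced after substitution. Applying an outer operator such as $(R(\alpha^{-1}(y))\otimes\alpha)$ to $\Delta(x)$ yields compositions like $R(\alpha^{-1}(y))\alpha\otimes\alpha L(\alpha^{-2}(x))$; using the multiplicativity relations I move every $\alpha$ to the outside of each tensor factor, and then \eqref{cond:r} lets me absorb the resulting even powers of $\alpha\otimes\alpha$ so that all exponents agree with those displayed in the target equations. The crucial bookkeeping point is that the flip terms on the right-hand sides of \eqref{Defbi1}, \eqref{Defbi2} and \eqref{Defbi3} --- the summands carrying an explicit $\tau$ --- convert, via $\tau\circ(f\otimes g)=(g\otimes f)\circ\tau$, into operators acting on $\tau(r)$ rather than on $r$. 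Together with the untwisted summands these assemble precisely the symmetric combination $r+\tau(r)$ that appears throughout \eqref{eq:eqv1}, \eqref{eq:eqv2} and \eqref{eq:eqv3}.

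Each of the three equations is then a sum of a large number of operator terms which, after normalization, must be shown to coincide with the corresponding member of \eqref{eq:eqv1}--\eqref{eq:eqv3}. In parts (a) and (b) the operators $L(\alpha^{-2}(x\diamond y))$ and $R(\alpha^{-2}(x\diamond y))$ arise directly from $\Delta(x\diamond y)$ and are kept; in part (c) it is $\Delta(x\diamond y)$ and $\Delta(y\diamond x)$ that feed the term $\frac{1}{3}\big((L(x\diamond y)-L(y\diamond x))\otimes\mathrm{id}-\mathrm{id}\otimes(R(x\diamond y)-R(y\diamond x))\big)$. Where two multiplication operators land in the same tensor factor the compositions are retained as is, matching the nested operator expressions in \eqref{eq:eqv1}--\eqref{eq:eqv3}; should any single-operator residue survive, it is removed using the adjoint-representation identities \eqref{c2}--\eqref{c4} and their consequence \eqref{eq:rep-property1} (with $\frkl=L$, $\frkr=R$, $\beta=\alpha$), together with \eqref{PoissAsso}.

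I expect the main obstacle to be organizational rather than conceptual: the equivalence holds only after consistently normalizing every $\alpha$-twist and correctly tracking which summands land on $r$ and which on $\tau(r)$, so a single misplaced power of $\alpha$ or a sign error in the $\tau$-commutation would break the matching. The delicate part is thus the disciplined reduction of every summand to a common normal form, after which reading off \eqref{eq:eqv1}, \eqref{eq:eqv2} and \eqref{eq:eqv3} is immediate. Since the three parts are structurally parallel, I would carry out part~(a) in full and then indicate that (b) and (c) follow by the same procedure applied to \eqref{Defbi2} and \eqref{Defbi3}.
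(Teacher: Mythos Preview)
Your plan is correct and matches the paper's own argument essentially step for step: substitute \eqref{eq:cobo} into \eqref{Defbi1}, use $(\alpha\otimes\alpha)r=r$ and multiplicativity to normalize the $\alpha$-powers, kill the terms involving only $r$ (or only $\tau(r)$) in a single tensor slot by the adjoint-representation identities \eqref{c2}--\eqref{c4} and \eqref{eq:rep-property1}, and collect what remains as operators on $r+\tau(r)$. The paper likewise carries out part~(a) in detail and dismisses (b) and (c) as similar, so your intended write-up is exactly what is expected.
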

\begin{proof}
For (a) we substitute 
\eqref{eq:cobo} in \eqref{Defbi1}
we get 
{\small\begin{align*}
0=&-(\alpha\otimes \alpha^{-2}L(x\diamond y)\alpha^2)r+(\alpha^{-2}R(x\diamond y)\alpha^2\otimes \alpha)r+(R(\alpha^{-1}(y)\alpha\otimes L(\alpha^{-1}(x)\alpha)r\\&-(\alpha^{-2}R(\alpha(y))R(x)\alpha^{2}\otimes \alpha^{2})r+(\alpha^{2}\otimes \alpha^{-2}L(\alpha(x))L(y)\alpha^2)r-(R(\alpha^{-1}(y))\alpha\otimes L(\alpha^{-1}(x))\alpha)r\\
&+\frac{1}{3}\Big[(L(\alpha^{-1}(y))\alpha\otimes L(\alpha^{-1}(x))\alpha)r-(\alpha^{-2}L(\alpha(y))R(x)\alpha^{2}\otimes \alpha^{2})r-((\alpha^{2}\otimes \alpha^{-2}L(\alpha(y))L(x)\alpha^{2})r\\
&+(R(\alpha^{-1}(x))\alpha\otimes L(\alpha^{-1}(y))\alpha)r+(L(\alpha^{-1}(x))\alpha\otimes L(\alpha^{-1}(y))\alpha)r
-(\alpha^{-2}L(\alpha(x))R(y)\alpha^2\otimes \alpha^2)r\\
&-(R(\alpha^{-1}(x))\alpha\otimes L(\alpha^{-1}(y))\alpha)r+(\alpha^{-2}(R(\alpha(x))R(y)\alpha^{2}\otimes\alpha^{2})r+(L(\alpha^{-1}(y))\alpha\otimes L(\alpha^{-1}(x))\alpha)\tau(r)\\
&-(\alpha^{-2}L(\alpha(x))R(y)\alpha^{2}\otimes \alpha^{2})\tau(r)+(L(\alpha^{-1}(x))\alpha\otimes L(\alpha^{-1}(y))\alpha)\tau(r)
-(\alpha^{-2}L(\alpha(y))R(x)\alpha^{2}\otimes \alpha^{2})\tau(r)\\
&-( \alpha^{-2}L(x\diamond y)\alpha^2\otimes \alpha)\tau(r)+(\alpha\otimes\alpha^{-2}R(x\diamond y)\alpha^2)\tau(r)
\Big] .\end{align*}}
Using the condition \eqref{cond:r}, we get
{\small\begin{align*}
&
(\alpha^2\otimes \alpha^{-2}L(x\diamond y)\circ \alpha\alpha^2)r+(\alpha^{-2}R(x\diamond y)\circ \alpha\alpha^2\otimes \alpha)r+(R(\alpha^{-1}(y)\alpha\otimes L(\alpha^{-1}(x)\alpha)r\\&-(\alpha^{-2}R(\alpha(y))R(x)\alpha^{2}\otimes \alpha^{2})r+(\alpha^{2}\otimes \alpha^{-2}L(\alpha(x))L(y)\alpha^2)r-(R(\alpha^{-1}(y))\alpha\otimes L(\alpha^{-1}(x))\alpha)r\\
&+\frac{1}{3}\Big[(L(\alpha^{-1}(y))\alpha\otimes L(\alpha^{-1}(x))\alpha)r-(\alpha^{-2}L(\alpha(y))R(x)\alpha^{2}\otimes \alpha^{2})r-((\alpha^{2}\otimes \alpha^{-2}L(\alpha(y))L(x)\alpha^{2})r\\
&+(R(\alpha^{-1}(x))\alpha\otimes L(\alpha^{-1}(y))\alpha)r+(L(\alpha^{-1}(x))\alpha\otimes L(\alpha^{-1}(y))\alpha)r
-(\alpha^{-2}L(\alpha(x))R(y)\alpha^2\otimes \alpha^2)r\\
&-(R(\alpha^{-1}(x))\alpha\otimes L(\alpha^{-1}(y))\alpha)r+(\alpha^{-2}(R(\alpha(x))R(y)\alpha^{2}\otimes\alpha^{2})r+(L(\alpha^{-1}(y))\alpha\otimes L(\alpha^{-1}(x))\alpha)\tau(r)\\
&-(\alpha^{-2}L(\alpha(x))R(y)\alpha^{2}\otimes \alpha^{2})\tau(r)+(L(\alpha^{-1}(x))\alpha\otimes L(\alpha^{-1}(y))\alpha)\tau(r)
-(\alpha^{-2}L(\alpha(y))R(x)\alpha^{2}\otimes \alpha^{2})\tau(r)\\
&-( \alpha^{-2}L(x\diamond y)\circ \alpha\alpha^2\otimes \alpha^2)\tau(r)+(\alpha^2\otimes\alpha^{-2}R(x\diamond y)\circ \alpha\alpha^2)\tau(r)
\Big].
\end{align*}}
So, we get
\begin{align*}
A_1=&\alpha^{-2}\Big[R(y\diamond x)\alpha-R(\alpha(y))R(x)+\frac{1}{3}(R(y\diamond y)\alpha-L(\alpha(y))R(x)-L(\alpha(x))R(y)\\
&+L(\alpha(x))L(y))\Big]\alpha^{2}\otimes\alpha^{2}=0    
\end{align*}
\begin{align*}
A_2=&\alpha^{2}\otimes\alpha^{-2}\Big[
 -L(x\diamond y)\alpha+L(\alpha(x))L(y)+\frac{1}{3}L(\alpha(x))R(y)-R(x\diamond y)\alpha-L(\alpha(y))L(x)\\
 &-L(\alpha(y))R(x)\Big]\alpha^{2}=0  
\end{align*}
\begin{align*}
A_3=&\Big[(L(\alpha^{-1}(y))\alpha\otimes L(\alpha^{-1}(x))\alpha)- (\alpha^{-2}L(\alpha(x))R(y)\alpha^{2}\otimes \alpha^{2})\Big](r+\tau(r))\\
+& \Big[(L(\alpha^{-1}(x))\alpha\otimes L(\alpha^{-1}(y))\alpha)- (\alpha^{-2}L(\alpha(y))R(x)\alpha^{2}\otimes \alpha^{2})\Big](r+\tau(r))\\
&-\Big[(\alpha^{-2}L(x\diamond y)\alpha\alpha^{2}\otimes\alpha^2)\Big]\tau(r)+\Big[(\alpha^{2}\otimes\alpha^{-2}R(x\diamond y)\alpha\alpha^{2}\Big]r\\
&+\Big[(\alpha^{2}\otimes\alpha^{-2}R(x\diamond y)\alpha\alpha^{2}\Big]\tau(r),
\end{align*}
\begin{align*}
A_4=&(\alpha^{-2}\Big[R(y
\diamond x)\alpha+L(\alpha(x))L(y)
-R(\alpha(x))R(y)
 \Big]\alpha^{2} \otimes\alpha^2)(r)\\
 &=(\alpha^{-2}L(x\diamond y)\alpha \alpha^2\otimes \alpha^2 ) (r).
\end{align*}
So we get finally by adding $A_4$ to $A_3$
\begin{eqnarray*}
&&({\rm \alpha}\otimes L(\alpha^{-1}(x)))(L(\alpha^{-2}(y))\otimes {\rm \alpha}-{\rm \alpha}\otimes
R(\alpha^{-2}(y)))(r+\tau(r))\nonumber\\ \nonumber
  &&+({\rm \alpha}\otimes L(\alpha^{-1}(y)))(L(\alpha^{-2}(x))\otimes {\rm
  \alpha}-{\rm \alpha}\otimes R(\alpha^{-2}(x))(r+\tau(r))\\
  &&-(L(\alpha^{-2}(x\diamond y))\otimes\alpha-\alpha\otimes R(\alpha^{-2}(x\diamond y))(r+\tau(r)=0.
\end{eqnarray*}
For (b) and (c), it is similar to (a).
\end{proof}

\section{$\mathcal O$-operators of admissible Hom-Poisson algebra admissible Hom-pre-Poisson algebras}\label{oper}
In this section, we introduce the notion of $\mathcal O$-operators of admissible Hom-Poisson algebras.On one hand Hom-$\mathcal O$-operator on admissible Hom-Poisson algebras gives rise to a Hom-pre-Poisson and on the other hand a Hom-pre-Poisson gives a Hom-$\mathcal O$-operator on the subadjacent admissible Hom-Poisson algebra. 

\begin{defi}
An admissible Hom-pre-Poisson algebra is a quadruple $(\mathcal{P},\succ,\prec,\alpha)$ such that $\mathcal{P}$ is a vector space,
$\succ,\prec:\mathcal{P}\otimes \mathcal{P}\to \mathcal{P}$ are two bilinear operations and $\alpha:\mathcal{P}\longrightarrow \mathcal{P}$ a linear map
satisfying the following conditions:
\begin{eqnarray}
A(x,y,z):&=&-(x\succ y)\succ \alpha(z)-(x\prec y)\succ \alpha(z)+\alpha(x)\succ(y\succ z)\nonumber\\
&&+\frac{1}{3}\big(\alpha(x)\succ(z\prec y)
      -\alpha(z)\prec(x\succ y)-\alpha(z)\prec(x\prec y)\nonumber\\
      &&-\alpha(y)\succ(x\succ z)+\alpha(y)\succ(z\prec x)\big)=0,\label{eq:presp1}\\
B(x,y,z):&=&-\alpha(x)\succ(z\prec y)+(x\succ z)\prec \alpha(y)+\frac{1}{3}\big(-\alpha(x)\succ(y\succ z)\nonumber\\
&&+\alpha(y)\succ(x\succ z)
      +\alpha(z)\prec(x\prec y)+\alpha(z)\prec(x\succ y)\nonumber\\
      &&-\alpha(z)\prec(y\succ x)-\alpha(z)\prec(y\prec x)\big)=0,\label{eq:presp2}\\
C(x,y,z):&=&-\alpha(z)\prec(x\succ y)-\alpha(z)\prec(x\prec y)+(z\prec x)\prec \alpha(y)\nonumber\\
     &&+\frac{1}{3}\big(-\alpha(z)\prec(y\succ x)-\alpha(z)\prec(y\prec x)\nonumber\\
     &&+\alpha(y)\succ(z\prec x)+\alpha(x)\succ(z\prec y)-\alpha(x)\succ(y\succ z)\big)=0\label{eq:presp3}
\end{eqnarray}
for all $x,y,z\in \mathcal{P}$.
\begin{rmk}An admissible Hom-pre-Poisson algebra
is called regular if $\alpha$ is invertible. An admissible pre-Poisson algebra is an admissible Hom-pre-Poisson algebra when $\alpha=id.$ \end{rmk}
\end{defi} 

Let $(\mathcal{P},\prec,\succ,\alpha)$
and $(\mathcal{P}',\prec',\succ',\alpha')$ be admissible Hom-pre-Poisson algebras. A morphism  $f:\mathcal{P} \longrightarrow\mathcal{P}'$, such that $\forall x,y\in \mathcal{P}$
\begin{align}
& f(x\prec y)=f(x)\prec'f(y),\\
&f(x\succ y)=f(x)\succ'f(y),\\
&f\circ \alpha =\alpha'\circ f.
\end{align}

In the following, we establish a relationship between admissible pre-Poisson algebra and admissible Hom-Poisson algebra using the Yau-twist (see \cite{DY} for more details).
\begin{pro}
Let $(\mathcal{P},\prec,\succ)$ be an admissible pre-Poisson algebra and let $\alpha:\mathcal{P}\longrightarrow \mathcal{P}$ be a linear map. Then $(\mathcal{P},\prec_{\alpha}=\alpha\circ \prec,\succ_{\alpha}=\alpha\circ\succ,\alpha)$ is an admissible Hom-pre-Poisson algebra called the Yau twist of $\mathcal{P}$. Moreover, suppose $(\mathcal{P}',\prec',\succ')$ is another
admissible pre-Poisson algebra and $\alpha':\mathcal{P}'\longrightarrow \mathcal{P}'$ is an admissible pre-Poisson algebra morphism that satisfies $f\circ \alpha=\alpha'\circ f,$ then $f:(\mathcal{P},\prec_{\alpha},\succ_{\alpha},\alpha)\longrightarrow (\mathcal{P}',\prec'_{\alpha'},\succ'_{\alpha'},\alpha')$ is a morphism of admissible Hom-pre-Poisson algebra
\end{pro}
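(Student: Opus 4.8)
The plan is to treat this as a two-stage Yau-twist argument: first verify the three defining identities \eqref{eq:presp1}, \eqref{eq:presp2}, \eqref{eq:presp3} for the twisted operations, and then check that $f$ respects all the structure maps. Throughout I would use that $\alpha$ is a morphism of $(\mathcal{P},\prec,\succ)$, that is $\alpha(x\prec y)=\alpha(x)\prec\alpha(y)$ and $\alpha(x\succ y)=\alpha(x)\succ\alpha(y)$ for all $x,y\in\mathcal{P}$ (exactly as in the earlier Yau-twist statement, where the twisting map is required to be a morphism); this is precisely the property that drives the computation, and I would flag it explicitly at the outset.

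The first step is to record the key factorisation. Since $x\star_{\alpha}y=\alpha(x\star y)$ for $\star\in\{\prec,\succ\}$, a double application of the definition followed by the morphism property of $\alpha$ gives, for instance,
\begin{align*}
(x\succ_{\alpha}y)\succ_{\alpha}\alpha(z)&=\alpha\big(\alpha(x\succ y)\succ\alpha(z)\big)=\alpha^{2}\big((x\succ y)\succ z\big),\\
\alpha(x)\succ_{\alpha}(y\prec_{\alpha}z)&=\alpha\big(\alpha(x)\succ\alpha(y\prec z)\big)=\alpha^{2}\big(x\succ(y\prec z)\big),
\end{align*}
and the identical identity holds for every one of the remaining operation combinations occurring in \eqref{eq:presp1}--\eqref{eq:presp3}, since every monomial there has one of the two shapes $(x\,\square\,y)\,\blacksquare\,\alpha(z)$ or $\alpha(x)\,\square\,(y\,\blacksquare\,z)$ with $\square,\blacksquare\in\{\prec,\succ\}$. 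In other words, each monomial of the twisted identity equals $\alpha^{2}$ applied to the corresponding monomial of the untwisted identity.

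The second step is to assemble these. Writing $A_{\alpha},B_{\alpha},C_{\alpha}$ for the left-hand sides of the twisted analogues of \eqref{eq:presp1}, \eqref{eq:presp2}, \eqref{eq:presp3}, the previous step yields $A_{\alpha}(x,y,z)=\alpha^{2}\big(A(x,y,z)\big)$ and likewise $B_{\alpha}(x,y,z)=\alpha^{2}\big(B(x,y,z)\big)$, $C_{\alpha}(x,y,z)=\alpha^{2}\big(C(x,y,z)\big)$, where $A,B,C$ are the operators of the original admissible pre-Poisson algebra. As $A(x,y,z)=B(x,y,z)=C(x,y,z)=0$ by hypothesis, all three twisted identities hold and $(\mathcal{P},\prec_{\alpha},\succ_{\alpha},\alpha)$ is an admissible Hom-pre-Poisson algebra. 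This bookkeeping is the only laborious part and is where I expect the main obstacle to lie: because the coefficients and sign patterns of \eqref{eq:presp1}--\eqref{eq:presp3} are intricate, the real work is matching each twisted monomial to its untwisted counterpart without dropping a term or flipping a sign; no genuinely new idea is required beyond the factorisation above.

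For the morphism claim, the relation $f\circ\alpha=\alpha'\circ f$ is given, so only compatibility with the twisted products remains; note also that $(\mathcal{P}',\prec'_{\alpha'},\succ'_{\alpha'},\alpha')$ is itself an admissible Hom-pre-Poisson algebra by the first part, since $\alpha'$ is a morphism of $\mathcal{P}'$. Using the definition of the twist, the hypothesis $f\circ\alpha=\alpha'\circ f$, the fact that $f$ is a morphism of the original structures, and finally the definition of $\prec'_{\alpha'}$, I would compute
\begin{align*}
f(x\prec_{\alpha}y)&=f\big(\alpha(x\prec y)\big)=\alpha'\big(f(x\prec y)\big)=\alpha'\big(f(x)\prec' f(y)\big)=f(x)\prec'_{\alpha'}f(y),
\end{align*}
together with the identical chain with $\succ$ in place of $\prec$. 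Combined with the given relation $f\circ\alpha=\alpha'\circ f$, this shows that $f$ is a morphism of admissible Hom-pre-Poisson algebras, which completes the argument.
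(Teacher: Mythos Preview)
Your proof is correct and follows essentially the same route as the paper: show that each twisted monomial equals $\alpha^{2}$ applied to the corresponding untwisted monomial, so that $A_{\alpha}=\alpha^{2}(A)$, $B_{\alpha}=\alpha^{2}(B)$, $C_{\alpha}=\alpha^{2}(C)$ all vanish. In fact you are more thorough than the paper, which omits the verification of the morphism claim entirely and does not flag (as you rightly do) that the statement's ``linear map'' $\alpha$ must actually be a morphism of the pre-Poisson structure for the argument to go through.
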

\begin{proof}
 Since  $(\mathcal{P},\prec,\succ)$ is an admissible pre-Poisson algebra and $\alpha$ be a morphism, then we have
 \begin{eqnarray*}
A(x,y,z):&=&-(x\succ_{\alpha} y)\succ_{\alpha} \alpha(z)-(x\prec_{\alpha} y)\succ_{\alpha} \alpha(z)+\alpha(x)\succ_{\alpha}(y\succ_{\alpha} z)\nonumber\\
&&+\frac{1}{3}\big(\alpha(x)\succ_{\alpha}(z\prec_{\alpha} y)
      -\alpha(z)\prec_{\alpha}(x\succ_{\alpha} y)-\alpha(z)\prec_{\alpha}(x\prec_{\alpha} y)\nonumber\\
      &&-\alpha(y)\succ_{\alpha}(x\succ_{\alpha} z)+\alpha(y)\succ_{\alpha}(z\prec_{\alpha} x)\big)\\
      &&=-\alpha^{2}\Big(-(x\succ y)\succ z-(x\prec y)\succ z+x\succ(y\succ z)+\frac{1}{3}\big(x\succ(z\prec y)\nonumber\\
      &&-z\prec(x\succ y)-z\prec(x\prec y)-y\succ(x\succ z)+y\succ(z\prec x)\big)\Big)=0.
 \end{eqnarray*}
 Same work we get 
 $$B(x,y,z)=0=C(x,y,z).$$
 Therefore, $(\mathcal{P},\prec_{\alpha},\succ_{\alpha},\alpha)$ is an admissible Hom-pre-Poisson algebra.
\end{proof}
\begin{cor}
 Let $(\mathcal{P},\prec,\succ,\alpha)$
 be a regular Hom-pre-Poisson algebra.  Then $(\mathcal{P},\prec_{\alpha^{-1}}:=\alpha^{-1}\circ \prec,
 \succ_{\alpha^{-1}}:=\alpha^{-1}\circ \succ)$ is an admissible pre-Poisson algebra.
\end{cor}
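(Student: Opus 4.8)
The plan is to recognise this as the inverse of the Yau-twist construction and to exploit the fact that the computation proving the preceding Proposition is purely formal: it uses only the multiplicativity of the twisting map, and none of the defining axioms.

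First I would record the elementary bookkeeping. Since $(\mathcal{P},\prec,\succ,\alpha)$ is regular, $\alpha$ is invertible, and by the standing multiplicativity convention $\alpha(x\prec y)=\alpha(x)\prec\alpha(y)$ and $\alpha(x\succ y)=\alpha(x)\succ\alpha(y)$. Writing $\prec_{\alpha^{-1}}=\alpha^{-1}\circ\prec$ and $\succ_{\alpha^{-1}}=\alpha^{-1}\circ\succ$, one checks at once that $\alpha\circ\prec_{\alpha^{-1}}=\prec$ and $\alpha\circ\succ_{\alpha^{-1}}=\succ$, so the original operations are exactly the Yau twist of $(\prec_{\alpha^{-1}},\succ_{\alpha^{-1}})$ by $\alpha$. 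Moreover $\alpha$ is again a morphism for the new operations, since $\alpha(x\prec_{\alpha^{-1}}y)=x\prec y=\alpha(x)\prec_{\alpha^{-1}}\alpha(y)$, and likewise for $\succ_{\alpha^{-1}}$.

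Next I would extract the key identity from the proof of the Yau-twist Proposition. That computation pulls a common factor $\alpha^{2}$ out of each of the three expressions by repeated use of multiplicativity, and never invokes the pre-Poisson axioms; hence, for the operations $\prec_{\alpha^{-1}},\succ_{\alpha^{-1}}$ together with their twists $\prec,\succ$, it is a formal identity that
\begin{align*}
A(x,y,z)&=-\alpha^{2}\big(A'(x,y,z)\big),\\
B(x,y,z)&=-\alpha^{2}\big(B'(x,y,z)\big),\\
C(x,y,z)&=-\alpha^{2}\big(C'(x,y,z)\big),
\end{align*}
where $A,B,C$ are the Hom-expressions \eqref{eq:presp1}, \eqref{eq:presp2}, \eqref{eq:presp3} formed from $\prec,\succ,\alpha$, and $A',B',C'$ denote the corresponding ordinary expressions (the case $\alpha=\mathrm{id}$) formed from $\prec_{\alpha^{-1}},\succ_{\alpha^{-1}}$.

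Finally I would read this in reverse. By hypothesis $(\mathcal{P},\prec,\succ,\alpha)$ is an admissible Hom-pre-Poisson algebra, so $A=B=C=0$. Since $\alpha$ is invertible, so is $\alpha^{2}$; applying $-\alpha^{-2}$ to the three identities above yields $A'=B'=C'=0$, which are precisely the defining axioms of an admissible pre-Poisson algebra for $(\mathcal{P},\prec_{\alpha^{-1}},\succ_{\alpha^{-1}})$. The only point demanding care is the middle step: one must be sure that the factorisation carried out in the Yau computation is genuinely axiom-free, so that the three equalities may be used as an equivalence once $\alpha^{2}$ is invertible. Granting this, the corollary follows immediately.
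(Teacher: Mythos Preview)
Your proposal is correct and is exactly the argument the paper has in mind: the corollary is stated without proof immediately after the Yau-twist Proposition, and your write-up simply makes explicit how it follows---by noting that the factorisation $A=\pm\alpha^{2}(A')$ (and likewise for $B,C$) in that Proposition uses only multiplicativity of $\alpha$, so that invertibility of $\alpha$ lets one run the implication backwards.
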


There is a close relationship between admissible Hom-Poisson algebra and admissible Hom-pre-Poisson algebra. Namely, an admissible Hom-pre-Poisson algebra $(\mathcal{P},\prec,\succ,\alpha)$ gives rise to admissible Hom-Poisson algebra via suitable relathionship which is given as follows 
%called the subadjacent admissible Hom-Poisson algebra denoted $\mathcal{P}^{c}$. Furthermore, define the map $L:\mathcal{P}\longrightarrow \mathfrak{gl}(\mathcal{P})$ defined by $L_{x}(y)=x\prec y, \forall x,y\in \mathcal{P}$, gives rise to a representation of the subadjacent admissible Hom-Poisson algebra on $\mathcal{P}$ with respect to $\alpha\in \mathfrak{gl}(\mathcal{P})$ 
\begin{pro}\label{Homp}
Let $(A,\succ,\prec,\alpha)$ be an admissible Hom-pre-Poisson algebra. Define
\begin{equation}\label{eq:sum}x\diamond y=x\succ y+x\prec y,\quad \forall x,y\in
  A.\end{equation}
Then $(\mathcal{P},\diamond,\alpha)$ is an admissible Hom-Poisson algebra, which is called the
\textup{\textbf{sub-adjacent admissible Hom-Poisson algebra}} of
$(\mathcal{P},\succ,\prec,\alpha)$ and denoted by $\mathcal{P}^c$ and $(\mathcal{P},\succ,\prec,\alpha)$ is
called  the {\bf compatible admissible Hom-pre-Poisson algebra} structure on
the admissible Hom-Poisson algebra $\mathcal{P}^c$.
\end{pro}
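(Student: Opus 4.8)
The plan is to check the single defining identity \eqref{c1} for the product $\diamond=\succ+\prec$, together with the (automatic) multiplicativity of $\alpha$. Multiplicativity is immediate: since the ambient Hom-pre-Poisson algebra is multiplicative, $\alpha(x\succ y)=\alpha(x)\succ\alpha(y)$ and $\alpha(x\prec y)=\alpha(x)\prec\alpha(y)$, whence $\alpha(x\diamond y)=\alpha(x\succ y)+\alpha(x\prec y)=\alpha(x)\diamond\alpha(y)$. Thus the whole content is to show that, under $\diamond=\succ+\prec$, the expression
\[
D(x,y,z):=(x\diamond y)\diamond\alpha(z)-\alpha(x)\diamond(y\diamond z)+\tfrac13\big(-\alpha(x)\diamond(z\diamond y)+\alpha(z)\diamond(x\diamond y)+\alpha(y)\diamond(x\diamond z)-\alpha(y)\diamond(z\diamond x)\big)
\]
vanishes.

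First I would substitute $\diamond=\succ+\prec$ everywhere and expand, so that each depth-two term splits into four monomials according to the choice of $\succ$ or $\prec$ at the inner and the outer node; this produces a list of twenty-four monomials in $\succ,\prec$ and $\alpha$. I would then organise these monomials and recognise $D$ as a linear combination of the three defining expressions $A(x,y,z)$, $B(x,y,z)$, $C(x,y,z)$ of \eqref{eq:presp1}--\eqref{eq:presp3}, evaluated at $(x,y,z)$ and at suitable permutations of the arguments; since $A=B=C=0$ by hypothesis, $D=0$ follows. Concretely, I would first peel off the strings matching the leading terms of $A$ (namely $-(x\succ y)\succ\alpha(z)-(x\prec y)\succ\alpha(z)+\alpha(x)\succ(y\succ z)$, which occur in $D$ through $(x\diamond y)\diamond\alpha(z)$ and $-\alpha(x)\diamond(y\diamond z)$), then absorb the remaining mixed terms using $B$, $C$ and their permuted copies, verifying at each stage that the spurious monomials cancel.

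For the regular case there is a shorter, conceptual route that I would record as a remark. If $\alpha$ is invertible, the preceding Corollary reduces $(\mathcal P,\succ,\prec,\alpha)$ to the admissible pre-Poisson algebra $(\mathcal P,\succ_{\alpha^{-1}},\prec_{\alpha^{-1}})$, whose sum product $\star:=\succ_{\alpha^{-1}}+\prec_{\alpha^{-1}}=\alpha^{-1}\circ\diamond$ is an admissible Poisson algebra by the classical result of \cite{Bai3}. Since $\alpha$ is a morphism of $\star$ (indeed $\alpha(x\star y)=x\diamond y=\alpha(x)\star\alpha(y)$ by multiplicativity of $\diamond$) and $\alpha\circ\star=\alpha\circ\succ_{\alpha^{-1}}+\alpha\circ\prec_{\alpha^{-1}}=\succ+\prec=\diamond$, the Yau-twist Definition-Proposition shows at once that $(\mathcal P,\diamond,\alpha)$ is an admissible Hom-Poisson algebra.

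The main obstacle is the bookkeeping in the direct verification: the identity \eqref{c1} is \emph{not} simply $A+B+C$, since $A+B+C$ carries terms such as $(x\succ z)\prec\alpha(y)$ and $(z\prec x)\prec\alpha(y)$ that do not occur in $D$. One must therefore pin down the precise combination of permuted relations so that these extraneous monomials cancel, while tracking every sign, every factor $\tfrac13$, and the exact placement of $\alpha$. Once the correct combination is identified the computation is routine but long; I would present it as an aligned display grouped so as to make the cancellations transparent, and treat the analogues for the other two pieces by the same pattern.
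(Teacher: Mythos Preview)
Your approach is the paper's approach: expand $\diamond=\succ+\prec$ in the defining identity and reduce the result to the three axioms $A,B,C$. The paper in fact asserts the cleanest possible outcome, namely that the expansion equals $A(x,y,z)+B(x,y,z)+C(x,y,z)$ with no permutations; however its displayed computation is visibly incomplete (two of the six $\diamond$--bracket terms are omitted in the intermediate step), and your objection is well taken: $A+B+C$ contains $(x\succ z)\prec\alpha(y)$ and $(z\prec x)\prec\alpha(y)$, which cannot arise from any of the six products $(x\diamond y)\diamond\alpha(z)$, $\alpha(\cdot)\diamond(\cdot\diamond\cdot)$, while conversely $-(x\succ y)\prec\alpha(z)$ and $-(x\prec y)\prec\alpha(z)$ occur in the expansion but not in $A+B+C$. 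So your plan to look for a combination of \emph{permuted} instances of $A,B,C$ is the correct repair of the paper's argument, and your Yau-twist shortcut in the regular case is a clean alternative worth recording.

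The only genuine gap in your proposal is that you have not yet identified the actual combination. The obvious first guess $A(x,y,z)+B(x,z,y)+C(y,z,x)$ (chosen so that the spurious ``$\prec\alpha(y)$'' terms become ``$\prec\alpha(z)$'' terms) does not close up either: the leading terms outside the $\tfrac13$--bracket double instead of cancelling. Before committing to the long display you should first determine the precise linear combination (allowing several permuted copies, possibly with coefficients $\pm1$ and $\pm\tfrac13$) by matching the ``left-nested'' monomials $(\ast\ast)\ast\alpha(\cdot)$ on both sides, since these occur only in $A,B,C$'s leading terms and thus fix the permutations and signs uniquely; the remaining ``right-nested'' monomials then provide the check.
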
\label{proposition1}    
\begin{proof}
 Let $(A,\succ,\prec,\alpha)$ be a admissible Hom-pre-Poisson algebra. For all
$x,y,z\in A$, we have
\begin{equation*}
\begin{aligned}
    &-(x\diamond y)\diamond \alpha(z) + \alpha(x)\diamond(y\diamond z) + \frac{1}{3} \Big( -\alpha(x)\diamond(z\diamond y) + \alpha(z)\diamond(x\diamond y) \\
    &+ \alpha(y)\diamond(x\diamond z) - \alpha(y)\diamond(z\diamond x) \Big) \\
    &= -\Big( (x\prec y + x\succ y)\prec \alpha(z) + (x\prec y + x\succ y)\succ \alpha(z) \Big) \\
    &+ \Big( \alpha(x)\prec(y\prec z + y\succ z) + \alpha(x)\succ(y\prec z + y\succ z) \Big) \\
    &+ \frac{1}{3} \Big( -\alpha(x)\prec(z\prec y + z\succ y) - \alpha(x)\succ(z\prec y + z\succ y) \\
    &+ \alpha(z)\prec(x\prec y + x\succ y) + \alpha(z)\succ(x\prec y + x\succ y) \Big) \\
    &= -(x\succ y)\succ \alpha(z) - (x\prec y)\succ \alpha(z) + \alpha(x)\succ(y\succ z) \\
    &+ \frac{1}{3} \Big( \alpha(x)\succ(z\prec y) - \alpha(z)\prec(x\succ y) - \alpha(z)\prec(x\prec y) \Big) \\
    &- \alpha(y)\succ(x\succ z) + \alpha(y)\succ(z\prec x) - \alpha(x)\succ(z\prec y) \\
    &+ (x\succ z)\prec \alpha(y) + \frac{1}{3} \Big( -\alpha(x)\succ(y\succ z) + \alpha(y)\succ(x\succ z) \\
    &+ \alpha(z)\prec(x\prec y) + \alpha(z)\prec(x\succ y) - \alpha(z)\prec(y\succ x) - \alpha(z)\prec(y\prec x) \Big) \\
    &- \alpha(z)\prec(x\succ y) - \alpha(z)\prec(x\prec y) + (z\prec x)\prec \alpha(y) \\
    &+ \frac{1}{3} \Big( -\alpha(z)\prec(y\succ x) - \alpha(z)\prec(y\prec x) + \alpha(y)\succ(z\prec x) \\
    &+ \alpha(x)\succ(z\prec y) - \alpha(x)\succ(y\succ z) \Big) \\
    &= A(x,y,z) + B(x,y,z) + C(x,y,z).
\end{aligned}
\end{equation*}
%\begin{align*}
%&-(x\diamond y)\diamond \alpha(z)+\alpha(x)\diamond(y\diamond z)+\frac{1}{3}\big(-\alpha(x)\diamond(z\diamond y)+\alpha(z)\diamond(x\diamond y)+\alpha(y)\diamond(x\diamond z)-\alpha(y)\diamond(z\diamond x)\big)\\
%=&-\Big((x\prec y+x\succ y)\prec \alpha(z)+ (x\prec y+x\succ y)\succ \alpha(z)\Big)
%+\Big(\alpha(x)\prec(y\prec z+y\succ z)+\alpha(x)\succ (y\prec z\\&+ y\succ z)\Big)
%+\frac{1}{3}\Big(-\alpha(x)\prec(z\prec y+z\succ y)-\alpha(x)\succ(z\prec y+z\succ y)
%+\alpha(z)\prec (x\prec y+x\succ y)\\&+\alpha(z)\succ(x\prec y+x\succ y)
%-\alpha(y)\prec (z\prec x+z\succ x)-\alpha(y)\succ(z\prec x+z\succ x)\Big)\\
%=&-(x\succ y)\succ \alpha(z)-(x\prec y)\succ \alpha(z)+\alpha(x)\succ(y\succ z)
%+\frac{1}{3}\big(\alpha(x)\succ(z\prec y)-\alpha(z)\prec(x\succ y)-\alpha(z)\prec(x\prec y)\nonumber\\
%      &-\alpha(y)\succ(x\succ z)+\alpha(y)\succ(z\prec x)\big)+
%-\alpha(x)\succ(z\prec y)+(x\succ z)\prec \alpha(y)+\frac{1}{3}\big(-\alpha(x)\succ(y\succ z)\nonumber\\
%&+\alpha(y)\succ(x\succ z)
% +\alpha(z)\prec(x\prec y)+\alpha(z)\prec(x\succ y)-\alpha(z)\prec(y\succ x)\alpha(z)\prec(y\prec x)\big)\\ 
% &+-\alpha(z)\prec(x\succ y)-\alpha(z)\prec(x\prec y)+(z\prec x)\prec \alpha(y)
  %   +\frac{1}{3}\big(-\alpha(z)\prec(y\succ x)\\
%&-\alpha(z)\prec(y\prec x)
 %    +\alpha(y)\succ(z\prec x)+\alpha(x)\succ(z\prec y)-\alpha(x)\succ(y\succ z)\big)\\
%=&\;A(x,y,z)+B(x,y,z)+C(x,y,z).
%\end{align*}
\emptycomment{where
\begin{eqnarray*}
A(x,y,z)&=&-(x\succ y)\succ z-(x\prec y)\succ z+x\succ(y\succ z)+\frac{1}{3}\big(x\succ(z\prec y)-z\prec(x\succ y)\nonumber\\
      &&-z\prec(x\prec y)-y\succ(x\succ z)+y\succ(z\prec x)\big),\\
B(x,y,z)&=&-x\succ(z\prec y)+(x\succ z)\prec y+\frac{1}{3}\big(-x\succ(y\succ z)+y\succ(x\succ z)+z\prec(x\prec y)\nonumber\\
&&+z\prec(x\succ y)-z\prec(y\succ x)-z\prec(y\prec x)\big),\\
C(x,y,z)&=&-z\prec(x\succ y)-z\prec(x\prec y)+(z\prec x)\prec y+\frac{1}{3}\big(-z\prec(y\succ x)-z\prec(y\prec x)\nonumber\\
     &&+y\succ(z\prec x)+x\succ(z\prec y)-x\succ(y\succ z)\big)
\end{eqnarray*}}
By Eqs.~(\ref{eq:presp1})-(\ref{eq:presp3}), we have
$A(x,y,z)=B(x,y,z)=C(x,y,z)=0$. Hence $(A^c,\diamond,\alpha)$ is an admissible Hom-pre-Poisson
algebra.   
\end{proof}
\begin{pro}
Let $(\mathcal{P},\succ,\prec,\alpha)$ be an admissible Hom-pre-Poisson algebra. Then
$(\mathcal{P},L_\succ,R_\prec,\alpha)$ is a representation of the sub-adjacent
 admissible Hom-Poisson algebra $(\mathcal{P}^c,\diamond,\alpha)$, where $L_\succ,R_\prec:\mathcal{P}\rightarrow \End_{\mathbb F}(\mathcal{P})$ are defined by
  \begin{equation}
   L_\succ(x)y=x\succ y,\quad R_\prec(x)y=y\prec x,\quad\forall x,y\in \mathcal{P}.
  \end{equation}
  Conversely, if $(\mathcal{P},\diamond,\alpha)$ is an admissible Hom-Poisson algebra together with two bilinear operations $\succ,\prec:\mathcal{P}\otimes \mathcal{P}\to \mathcal{P}$
such that $(L_\succ,R_\prec,\alpha)$ is a representation of $(\mathcal{P},\diamond,\alpha)$,
then $(\mathcal{P},\succ,\prec,\alpha)$ is an admissible Hom-pre-Poisson algebra.
\end{pro}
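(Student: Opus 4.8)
The plan is to reduce the operator identities defining a representation (Definition~\ref{definition module}) directly to the three defining axioms of an admissible Hom-pre-Poisson algebra, so that both directions of the equivalence fall out together. Throughout I take $\frkl=L_\succ$, $\frkr=R_\prec$ and $\beta=\alpha$, and I use the sub-adjacent product $x\diamond y=x\succ y+x\prec y$ from Proposition~\ref{Homp}, together with the dictionary $L_\succ(u)w=u\succ w$ and $R_\prec(u)w=w\prec u$.

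First I would dispose of the two multiplicativity conditions \eqref{condmultipli1} and \eqref{condmultipli2}. By the standing convention that all Hom-algebras are multiplicative, $\alpha(x\succ y)=\alpha(x)\succ\alpha(y)$ and $\alpha(x\prec y)=\alpha(x)\prec\alpha(y)$. Evaluating both sides of \eqref{condmultipli1} on $y$ yields $\alpha(x)\succ\alpha(y)$ against $\alpha(x\succ y)$, which coincide; the analogous computation with $\prec$ settles \eqref{condmultipli2}. Hence the multiplicativity part of the representation axioms holds automatically.

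The core of the argument is to evaluate each of the three structural identities \eqref{c2}, \eqref{c3}, \eqref{c4} on an arbitrary $z\in\mathcal{P}$ and read off the result. Unwinding the operator compositions with the dictionary above, and expanding $x\diamond y=x\succ y+x\prec y$ in the terms $\frkl(x\diamond y)\circ\beta$ and $\frkr(x\diamond y)\circ\beta$, I expect \eqref{c2} applied to $z$ to become precisely $A(x,y,z)=0$ of \eqref{eq:presp1}, \eqref{c3} applied to $z$ to become precisely $B(x,y,z)=0$ of \eqref{eq:presp2}, and \eqref{c4} applied to $z$ to become precisely $C(x,y,z)=0$ of \eqref{eq:presp3}. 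For instance, in \eqref{c2} the left-hand term $\frkl(x\diamond y)\circ\beta(z)$ becomes $(x\succ y+x\prec y)\succ\alpha(z)$, the term $\frkl(\alpha(x))\frkl(y)z$ becomes $\alpha(x)\succ(y\succ z)$, and $\frkl(\alpha(x))\frkr(y)z$ becomes $\alpha(x)\succ(z\prec y)$; after transposing everything to one side and matching the $\frac{1}{3}$ coefficients one recovers $A(x,y,z)$ verbatim.

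Because each of these three identifications is an equality of expressions in $\succ,\prec,\alpha$ valid for all $x,y,z$, they are genuine equivalences, and so the direct statement and its converse are proved at once: the pre-Poisson axioms \eqref{eq:presp1}--\eqref{eq:presp3} hold if and only if the representation axioms \eqref{c2}--\eqref{c4} hold. The only genuine work is the bookkeeping---tracking signs, deciding which of $\succ$ or $\prec$ each operator contributes, and handling the split $x\diamond y=x\succ y+x\prec y$ inside the $\frkr(x\diamond y)\circ\beta$ and $\frkr(y\diamond x)\circ\beta$ terms---so I expect no structural obstacle beyond careful term-matching.
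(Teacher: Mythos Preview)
Your proposal is correct and is precisely the natural verification; the paper in fact states this proposition without proof, so there is nothing to compare against beyond the implicit expectation that one unwinds the operator identities. Your term-by-term identification of \eqref{c2}, \eqref{c3}, \eqref{c4} (evaluated at $z$) with $A(x,y,z)=0$, $B(x,y,z)=0$, $C(x,y,z)=0$ checks out exactly, and since these are literal rewritings rather than one-way implications, both directions follow at once as you say. One small caveat worth making explicit when you write it up: the converse as phrased in the paper introduces an admissible Hom-Poisson product $\diamond$ and operations $\succ,\prec$ without stating that $\diamond=\succ+\prec$; your argument (and the intended statement) tacitly uses Eq.~\eqref{eq:sum}, so you should note that assumption when invoking it to expand $\frkl(x\diamond y)$ and $\frkr(x\diamond y)$.
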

We get a direct consequence  given as follows.

\begin{cor} \label{cor:id}
Let $(\mathcal{P}, \prec, \succ,\alpha)$ be an admissible Hom-pre-Poisson algebra. Then the
identity map ${\rm id}$ is an $\mathcal O$-operator of the
sub-adjacent admissible Hom-Poisson algebra $(A^c,\diamond,\alpha)$ associated to the
representation $(\mathcal{P},L_\succ,R_\prec,\alpha)$.
\end{cor}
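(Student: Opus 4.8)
The plan is to unwind the definition of a Hom-$\mathcal{O}$-operator and check that the identity map satisfies it essentially for free. Recall that a linear map $T\colon V\to \mathcal{P}$ is a Hom-$\mathcal{O}$-operator of an admissible Hom-Poisson algebra $(\mathcal{P},\diamond,\alpha)$ associated to a representation $(V,\frkl,\frkr,\beta)$ if it intertwines the twisting maps, $T\circ\beta=\alpha\circ T$, and satisfies the operator identity
\begin{equation*}
T(u)\diamond T(v)=T\big(\frkl(T(u))v+\frkr(T(v))u\big),\qquad\forall u,v\in V.
\end{equation*}
Here I take $V=\mathcal{P}$, $\frkl=L_\succ$, $\frkr=R_\prec$, $\beta=\alpha$, and $T={\rm id}$. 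The preceding proposition already guarantees that $(\mathcal{P},L_\succ,R_\prec,\alpha)$ is a representation of the sub-adjacent admissible Hom-Poisson algebra $(\mathcal{P}^c,\diamond,\alpha)$, so it only remains to verify the two defining conditions of an $\mathcal{O}$-operator for $T={\rm id}$.

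The twist-compatibility condition reads ${\rm id}\circ\alpha=\alpha\circ{\rm id}$, which holds trivially. For the operator identity, substituting $T={\rm id}$, $\frkl=L_\succ$ and $\frkr=R_\prec$ and using the definitions $L_\succ(x)y=x\succ y$ and $R_\prec(x)y=y\prec x$ turns the right-hand side into
\begin{equation*}
L_\succ(u)v+R_\prec(v)u=u\succ v+u\prec v,
\end{equation*}
while the left-hand side is simply $u\diamond v$. By the defining relation \eqref{eq:sum} of the sub-adjacent product in Proposition~\ref{Homp}, these two expressions coincide, so both conditions hold and ${\rm id}$ is the desired $\mathcal{O}$-operator.

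Since every step is a direct substitution into the definitions, there is no genuine obstacle here: the corollary is exactly the observation that the sub-adjacent product $\diamond=\succ+\prec$ records the $\mathcal{O}$-operator identity for the identity map, with the $\alpha$-compatibility automatic because the representation is built from the very same twist $\alpha$. The only point worth stating carefully is that one is entitled to invoke the representation property, which is precisely the content of the immediately preceding proposition and hence may be assumed.
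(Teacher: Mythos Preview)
Your verification is clean, but it does not match the paper's own definition of a Hom-$\mathcal{O}$-operator. In this paper the operator identity carries a $\beta^{-1}$-twist:
\[
T(u)\diamond T(v)=T\big(\frkl(T(\beta^{-1}(u)))v+\frkr(T(\beta^{-1}(v)))u\big),
\]
not the untwisted version you wrote down. If you substitute $T={\rm id}$, $\beta=\alpha$, $\frkl=L_\succ$, $\frkr=R_\prec$ into the paper's actual condition, the right-hand side becomes $\alpha^{-1}(u)\succ v+u\prec\alpha^{-1}(v)$, which is not $u\succ v+u\prec v=u\diamond v$ in general. So the step where you claim ``both conditions hold'' does not go through with the definition as stated in this paper.

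The paper itself offers no proof of the corollary (it is announced as a ``direct consequence'' of the preceding proposition), so there is nothing to compare your argument against except the definitions. What you have really shown is the standard (non-Hom, or untwisted) fact that ${\rm id}$ is an $\mathcal{O}$-operator for $\diamond=\succ+\prec$; to make this a proof of the corollary in the present paper you must either (i) argue that the $\beta^{-1}$ in the definition is a typo and justify using the untwisted identity instead, or (ii) verify the twisted identity directly, which would require an additional hypothesis or identity you have not supplied. As written, the gap is the silent replacement of the paper's $\mathcal{O}$-operator condition by a different one.
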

In the following, we recall the notion of Hom-pre-Poisson algebras (see \cite{makhlouf}), which is a Hom-Zinbiel algebra and Hom-pre-Lie algebra satistfying a given compatibility. We show that any admissible Hom-pre-Poisson algebra gives a Hom-pre-Poisson algebra. 
\begin{defi}
 A \textbf{Hom-pre-Lie} algebra is a triple
 $(A,\star,\alpha)$ consisting of a vector space $A$, a bilinear map  $\star:A\otimes A\longrightarrow A$ and an algebra morphism $\alpha:A\longrightarrow A$ satisfying
 \begin{align}
 &(x\star y)\star\alpha(z)-\alpha(x)\star(y\star z)=(y\star x)\star\alpha(z)-\alpha(y)\star(x\star z)
 \end{align}
\end{defi}
\begin{defi}
  A \textbf{Hom-Zinbiel} algebra is a triple $(A,\diamond,\alpha)$ consisting of a vector space  $A$, a bilinear map $\diamond:A\otimes A\longrightarrow A$
 and an algebra morphism $\alpha:A\longrightarrow A$ satisfying 
 \begin{align}
 \alpha(x)\diamond(y\diamond z)=(x\diamond y)\diamond \alpha(z)+(y\diamond x)\diamond\alpha(z), \forall x,y,z \in A.
 \end{align}
\end{defi}
\begin{defi}\cite{makhlouf}
A \textbf{Hom-pre-Poisson} algebra is a quadruple $(A,\diamond,\star,\alpha)$ where $(A,\cdot,\alpha)$ is a Hom-Zinbiel algebra and $(A,\star,\alpha)$ is a Hom-pre-Lie algebra such that $\forall x,y,z\in A$, the following conditions holds:
\begin{align}
& (x\star y-y\star x)\cdot \alpha(z)=  \alpha(x)\star(y\cdot z)-\alpha(y)\cdot(x\star z)\\
&(x\cdot y+y\cdot x)\star\alpha(z)=\alpha(x)\cdot(y\star z)+\alpha(y)\cdot (x\star z).
\end{align}
A Hom-pre-Lie algebra $(A,\cdot,\star,\alpha)$ is called regular if $\alpha$ is invertible.
\end{defi}
\begin{pro}\label{HomPoi}
Let $(A,\cdot,\star,\alpha)$ be a Hom-pre-Poisson. Define $$x\circ y=x\cdot y+y\cdot x, \quad \quad [x,y]=x\star y-y\star x,\forall x,y\in A.$$ Then $(A,[\cdot,\cdot],\circ,\alpha)$ is a Hom-Poisson algebra.    
\end{pro}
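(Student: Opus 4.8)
The plan is to verify directly the three defining conditions of a Hom-Poisson algebra for the quadruple $(A,[\cdot,\cdot],\circ,\alpha)$: that $(A,[\cdot,\cdot],\alpha)$ is a Hom-Lie algebra, that $(A,\circ,\alpha)$ is a commutative Hom-associative algebra, and that the Hom-Leibniz compatibility \eqref{lHom-poiss-jacobi} holds. The first two are the standard sub-adjacent constructions for the pre-Lie and Zinbiel parts; the third, coupling them through the two mixed compatibility axioms, is the real content.

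First I would establish the Hom-Lie structure. Skew-symmetry of $[x,y]=x\star y-y\star x$ is immediate. For the Hom-Jacobi identity, set $P(x,y,z):=\alpha(x)\star(y\star z)-(x\star y)\star\alpha(z)$; the defining identity of the Hom-pre-Lie algebra $(A,\star,\alpha)$ is precisely the symmetry $P(x,y,z)=P(y,x,z)$ in the first two slots. Expanding the cyclic sum $[\alpha(x),[y,z]]+[\alpha(y),[z,x]]+[\alpha(z),[x,y]]$ into its twelve $\star$-monomials and regrouping them yields $\big(P(x,y,z)-P(y,x,z)\big)+\big(P(y,z,x)-P(z,y,x)\big)+\big(P(z,x,y)-P(x,z,y)\big)$, which vanishes term-by-term by the symmetry of $P$. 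Next, commutativity of $x\circ y=x\cdot y+y\cdot x$ is immediate, and for Hom-associativity I would expand $(x\circ y)\circ\alpha(z)$ and $\alpha(x)\circ(y\circ z)$ into six $\cdot$-monomials each and rewrite every term carrying an inner $\alpha$ using the Hom-Zinbiel identity $\alpha(a)\cdot(b\cdot c)=(a\cdot b)\cdot\alpha(c)+(b\cdot a)\cdot\alpha(c)$; both sides then reduce to the same symmetric sum $\sum(u\cdot v)\cdot\alpha(w)$ over the relevant permutations of $x,y,z$, so they coincide.

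The heart of the argument, and the step I expect to be the main obstacle, is the Hom-Leibniz compatibility \eqref{lHom-poiss-jacobi}. Expanding the left-hand side gives $[\alpha(x),y\circ z]=\alpha(x)\star(y\cdot z)+\alpha(x)\star(z\cdot y)-(y\cdot z+z\cdot y)\star\alpha(x)$, and applying the second compatibility condition $(a\cdot b+b\cdot a)\star\alpha(c)=\alpha(a)\cdot(b\star c)+\alpha(b)\cdot(a\star c)$ to the last bracket turns it into $\alpha(y)\cdot(z\star x)+\alpha(z)\cdot(y\star x)$, so the left-hand side collapses to $\alpha(x)\star(y\cdot z)+\alpha(x)\star(z\cdot y)-\alpha(y)\cdot(z\star x)-\alpha(z)\cdot(y\star x)$. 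On the right-hand side, expanding $[x,y]\circ\alpha(z)+\alpha(y)\circ[x,z]$ produces the terms $(x\star y-y\star x)\cdot\alpha(z)$ and $(x\star z-z\star x)\cdot\alpha(y)$, to which the first compatibility condition $(a\star b-b\star a)\cdot\alpha(c)=\alpha(a)\star(b\cdot c)-\alpha(b)\cdot(a\star c)$ applies, together with the terms $\alpha(z)\cdot(x\star y-y\star x)$ and $\alpha(y)\cdot(x\star z-z\star x)$, which are kept as they are.

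After these substitutions the right-hand side carries two families of leftover $\cdot$-terms with an inner $\star$, namely $\mp\alpha(y)\cdot(x\star z)$ and $\pm\alpha(z)\cdot(x\star y)$, which cancel in pairs, so the right-hand side reduces to the very same expression $\alpha(x)\star(y\cdot z)+\alpha(x)\star(z\cdot y)-\alpha(y)\cdot(z\star x)-\alpha(z)\cdot(y\star x)$, establishing \eqref{lHom-poiss-jacobi}. The delicate point is to deploy the two compatibility conditions on exactly the right terms so that these cancellations are produced; careful tracking of signs and of the left/right placement of each factor in the non-commutative products $\cdot$ and $\star$ is what carries the step. Once all three conditions are checked, $(A,[\cdot,\cdot],\circ,\alpha)$ is a Hom-Poisson algebra, as claimed.
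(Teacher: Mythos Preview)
Your proof is correct and complete: the three verifications (Hom-Lie from pre-Lie associator symmetry, commutative Hom-associative from the Zinbiel identity, and the Hom-Leibniz rule from the two mixed compatibilities) all go through exactly as you describe, including the pairwise cancellations $\mp\alpha(y)\cdot(x\star z)$ and $\pm\alpha(z)\cdot(x\star y)$ on the right-hand side. Note that the paper does not actually supply a proof of this proposition; it is stated as a known fact (the Hom-pre-Poisson definition is taken from \cite{makhlouf}), so your direct verification is precisely the argument one would write out, and there is nothing to compare against.
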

By direct computation we get the following theorem
\begin{thm}\label{Hompd}
 Any admissible Hom-pre-Poisson algebra $(P,\prec, \succ,\alpha)$ gives  a Hom-pre-Poisson algebra $(P,\cdot, \diamond,\alpha)$, where
 $$x\cdot y= \frac{1}{2}(x\prec y-y \succ x)\quad \text{and}\quad x\diamond y= \frac{1}{2}(x\prec y+y \succ x),\quad \forall x,y\in P.$$
\end{thm}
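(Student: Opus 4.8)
The plan is to check the four defining relations of a Hom-pre-Poisson algebra for the pair $(\cdot,\diamond)$: the Hom-Zinbiel axiom, the Hom-pre-Lie axiom, and the two compatibility conditions. First I would pin down which operation plays which role. Symmetrizing and antisymmetrizing the definitions gives
\[
x\diamond y+y\diamond x=\tfrac12\big(x\prec y+y\prec x+x\succ y+y\succ x\big),\qquad
x\cdot y-y\cdot x=\tfrac12\big(x\prec y+x\succ y-y\prec x-y\succ x\big).
\]
These are exactly the commutative product $x\circ y$ and the bracket $[x,y]$ that Propositions~\ref{Homp} and~\ref{Yau} attach to the sub-adjacent admissible Hom-Poisson algebra $\mathcal P^{c}$ defined by \eqref{eq:sum}. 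Thus $\diamond$ is forced to be the Hom-Zinbiel product and $\cdot$ the Hom-pre-Lie product, and the construction is automatically consistent with the Hom-Poisson algebra $(\mathcal P^{c},[\cdot,\cdot],\circ,\alpha)$. I would also record the inverse formulas $x\prec y=x\cdot y+x\diamond y$ and $x\succ y=y\diamond x-y\cdot x$, which allow me to move between the two presentations.

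The proof is then a structured expansion. Into each of the four target relations I substitute $x\cdot y=\tfrac12(x\prec y-y\succ x)$ and $x\diamond y=\tfrac12(x\prec y+y\succ x)$, expand the nested products, and collect terms. The assertion to be established is that each target relation equals a fixed linear combination, with coefficients in $\{\pm1,\pm\tfrac13\}$, of the admissible Hom-pre-Poisson expressions $A(x,y,z),B(x,y,z),C(x,y,z)$ of \eqref{eq:presp1}--\eqref{eq:presp3} evaluated at permutations of $x,y,z$; since $A=B=C=0$ by hypothesis, the relation follows. The Hom-Zinbiel and Hom-pre-Lie axioms each involve only one of the two operations, so after substitution they reduce to symmetric (respectively antisymmetric) combinations of $A,B,C$; this is the refined analogue of the computation in Proposition~\ref{Homp}, where the bare sum $A+B+C$ already reproduced the admissible Hom-Poisson identity \eqref{c1}.

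The real work is in the two compatibility conditions, which mix $\cdot$ and $\diamond$ and hence, after substitution, simultaneously produce all four nesting patterns $\prec\prec,\prec\succ,\succ\prec,\succ\succ$ with $\alpha$ applied on both the inner and the outer arguments. The main obstacle is the bookkeeping: tracking the exact permutation of $x,y,z$ and the $\pm\tfrac13$ weights so that the roughly dozen monomials in each compatibility condition assemble precisely into a combination of $A,B,C$ and their argument-swaps with no residual terms. A practical way to organize this is to expand $A,B,C$ back in terms of $\cdot$ and $\diamond$ and read off the combinations directly. As an independent check in the regular case, one may untwist via the Corollary following the Yau-twist proposition to a classical admissible pre-Poisson algebra, apply the non-Hom statement that it yields a pre-Poisson algebra, and Yau-twist back, using that $x\cdot y=\alpha(x\cdot'y)$ and $x\diamond y=\alpha(x\diamond'y)$ for the untwisted operations. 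Once the compatibility conditions are matched to $A,B,C$, the verification is complete.
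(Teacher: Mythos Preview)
Your plan is sound and matches the paper's approach: the paper gives no detailed argument, stating only ``By direct computation we get the following theorem,'' and your proposal is precisely a structured version of that computation, organized (as in the proof of Proposition~\ref{Homp}) by reducing each target axiom to a linear combination of $A,B,C$ at permuted arguments. One small caveat: since the substitution introduces a factor $\tfrac12$ at each level, the actual coefficients in front of the $A,B,C$ terms will typically lie in $\tfrac14\ZZ[\tfrac13]$ rather than in $\{\pm1,\pm\tfrac13\}$, so do not be surprised when the bookkeeping produces $\tfrac14$'s and $\tfrac{1}{12}$'s.
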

\begin{defi}
 Let $(A,\diamond,\alpha)$ be an admissible Hom-Poisson algebra and $(V\frkl,\frkr,\beta)$ be a
representation of $(A,\diamond,\alpha)$. A linear map $T:V\to A$ is called
an \textbf{\textup{$\mathcal{O}$-operator of $(A,\diamond,\alpha)$ associated
to}} $(V\frkl,\frkr,\beta)$ if $T$ satisfies
    \begin{align}
  &  T\circ\beta=\alpha\circ T\\
&T(u)\diamond T(v)=T(\frkl(T(\beta^{-1}(u))v+\frkr(T(\beta^{-1}(v))u)),\quad \forall u,v\in V.
    \end{align}   
\end{defi}
%\begin{ex}
%Let $(A,\diamond,\alpha)$ be an admissible Hom-Poisson algebra. Then the identity map $id:A\to A$ is an $\mathcal{O}$-operator of $(A,\diamond,\alpha)$ associated to
%representations $(A,L,0,\alpha)$ and $(A,0,R,\alpha)$ respectively.
%\end{ex}

\begin{ex}
Let $(A,\diamond,\alpha)$ be an admissible Hom-Poisson algebra. An $\mathcal O$-operator
$\mathcal{R}$ associated to the representation $(A,L,R,\alpha)$ is called a {\bf
Hom-Rota-Baxter operator of weight zero}, that is, $R$ satisfies
\begin{align}
&\mathcal{R}\circ \alpha=\alpha\circ\mathcal{R}\\
&\mathcal{R}(x)\diamond \mathcal{R}(y)=\mathcal{R}(\mathcal{R}(\alpha^{-1}(x))\diamond y+x\diamond \mathcal{R}(\alpha^{-1}(y))),\;\;\forall x,y\in A.
\end{align}
\end{ex}

\begin{lem}
Let $(\mathcal{P},\diamond,\alpha)$ be an admissible Hom-Poisson algebra and $(V,\rho,\mu,\beta)$ a representation of $\mathcal{P}$ on $V$ and $T:V\to A $ be  a Hom-$\mathcal O$-operator. Define $\prec_{T}$   and $\succ_{T}$ on $V$ by 
\begin{align*}
 &x\prec_{T} y=\mu(T(\beta^{-1}(x))y \\
 &x\succ_{T}y=\rho((T(\beta^{-1}(x)))y
\end{align*}
Then $(\cal{P},\prec_{T},\succ_{T},\alpha)$ is an admissible Hom-pre-Poisson algebra.
\end{lem}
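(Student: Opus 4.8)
The structure to be built lives on $V$ (with twisting map $\beta$), so the plan is to verify that $(V,\succ_T,\prec_T,\beta)$ is $\beta$-multiplicative and satisfies the three defining identities \eqref{eq:presp1}--\eqref{eq:presp3}. Throughout I would work in the regular setting (both $\alpha$ and $\beta$ invertible, which is exactly the context in which $\beta^{-1}$ and the Hom-$\mathcal O$-operator are defined), so that $T\circ\beta^{-1}=\alpha^{-1}\circ T$. It is convenient to read the two operations compatibly with the right action of the representation, namely $x\succ_T y=\rho(T(\beta^{-1}(x)))y$ and $x\prec_T y=\mu(T(\beta^{-1}(y)))x$, so that $L_{\succ_T}(x)=\rho(\alpha^{-1}(T(x)))$ and $R_{\prec_T}(x)=\mu(\alpha^{-1}(T(x)))$. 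With this reading the defining relation of the Hom-$\mathcal O$-operator becomes $T(x)\diamond T(y)=T(x\succ_T y+y\prec_T x)$, i.e.\ $T$ is an algebra homomorphism from the prospective sub-adjacent algebra $(V,\diamond_T,\beta)$, where $x\diamond_T y:=x\succ_T y+x\prec_T y$, onto $(A,\diamond)$: $T(x\diamond_T y)=T(x)\diamond T(y)$.

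Next I would dispatch multiplicativity. Using \eqref{condmultipli1} as $\beta\circ\rho(a)=\rho(\alpha(a))\circ\beta$ together with $T\beta=\alpha T$ gives $\beta(x\succ_T y)=\beta\,\rho(\alpha^{-1}(Tx))y=\rho(Tx)\beta(y)=\rho(\alpha^{-1}(T\beta x))\beta(y)=\beta(x)\succ_T\beta(y)$, and symmetrically \eqref{condmultipli2} yields $\beta(x\prec_T y)=\beta(x)\prec_T\beta(y)$.

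The heart of the argument is the verification of \eqref{eq:presp1}--\eqref{eq:presp3}. The key observation I would exploit is that, in each of these identities, every sub-term in which a $T$-product occurs as an argument appears inside the combination $x\succ_T y+x\prec_T y=x\diamond_T y$; applying the homomorphism property and $T\beta^{-1}=\alpha^{-1}T$ then collapses that argument, turning such sub-terms into $\rho(\alpha^{-1}(Tx\diamond Ty))(-)$ or $\mu(\alpha^{-1}(Tx\diamond Ty))(-)$. After this collapse, every term of \eqref{eq:presp1} becomes an operator assembled from $\rho,\mu$ with arguments among $\alpha^{-1}(Tx)$, $\alpha^{-1}(Ty)$, $\alpha^{-1}(Tx\diamond Ty)$, applied to the single vector $z$. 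Setting $X:=\alpha^{-1}(Tx)$, $Y:=\alpha^{-1}(Ty)$ and using multiplicativity of $\alpha$ in the form $\alpha(X\diamond Y)=Tx\diamond Ty$, the resulting identity is exactly the representation axiom \eqref{c2} for $(V,\rho,\mu,\beta)$ evaluated at $X,Y$ and $z$. In the same manner \eqref{eq:presp2} reduces to \eqref{c3} and \eqref{eq:presp3} to \eqref{c4}, each evaluated at the same $X,Y$; this is precisely the calculation already recorded in the proposition identifying admissible Hom-pre-Poisson algebras with representations of their sub-adjacent algebras.

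The main obstacle is purely organizational: the twist bookkeeping. One must repeatedly commute $\alpha^{-1}$ past $T$, use multiplicativity of $\alpha$ to rewrite $Tx\diamond Ty$ as $\alpha(X\diamond Y)$, and --- the one genuinely load-bearing point --- check that the two (respectively, for \eqref{eq:presp2} and \eqref{eq:presp3}, three) sub-terms carrying a $T$-product as argument really do assemble into the single sub-adjacent product $x\diamond_T y$. This is what forces the right-action reading of $\prec_T$ above, so that $R_{\prec_T}=\mu\circ\alpha^{-1}\circ T$ matches the right action $\mu$ of the representation; with that reading in place, matching the coefficients $1$ and $\tfrac{1}{3}$ against \eqref{c2}--\eqref{c4} is immediate, and no identity beyond the representation axioms and the $\mathcal O$-operator relation is required.
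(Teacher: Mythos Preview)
The paper states this lemma without proof, so there is no argument to compare against. Your approach is the natural one and it is correct: after the $\mathcal O$-operator relation collapses the inner $T$-products, each of \eqref{eq:presp1}--\eqref{eq:presp3} for $(\succ_T,\prec_T,\beta)$ becomes literally the corresponding representation axiom \eqref{c2}--\eqref{c4} evaluated at $X=\alpha^{-1}(Tx)$, $Y=\alpha^{-1}(Ty)$ and applied to $z$, and the multiplicativity check is exactly as you wrote it. This is also the mechanism behind the converse direction of the (unlabelled) proposition immediately preceding Corollary~\ref{cor:id}, so your reference to that proposition is apt.

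Two remarks. First, your ``right-action reading'' $x\prec_T y=\mu(T\beta^{-1}(y))\,x$ is not a matter of convenience but a necessary correction of the statement: with the printed formula $x\prec_T y=\mu(T\beta^{-1}(x))\,y$, the sum $x\succ_T y+x\prec_T y$ does \emph{not} match the right-hand side of the $\mathcal O$-operator identity $T(u)\diamond T(v)=T\bigl(\rho(T\beta^{-1}u)v+\mu(T\beta^{-1}v)u\bigr)$, so the collapsing step you rely on would fail and $T$ would not be a homomorphism for the sub-adjacent product. You should present this as a correction rather than a reading. Second, a minor bookkeeping slip: in \eqref{eq:presp2} and \eqref{eq:presp3} the sub-terms carrying a $T$-product as argument still assemble into exactly two sub-adjacent products ($x\diamond_T y$ and $y\diamond_T x$), not three; this has no effect on the argument.
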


We have shown that (Hom-)Poisson, admissible (Hom-)Poisson, Hom-pre-Poisson, admissible Hom-pre-Poisson algebras are
closely related in the sense of commutative diagram of categories as follows:
\begin{equation}\label{diagramhommalcev}
    \begin{split}
 {\xymatrix{
\ar[rr] \mbox{\bf Adm. Hom-pre-Poi. alg  }\ar[d]_{\mbox{Prop. \ref{Homp}}}\ar[rr]^{\mbox{ Theorem \ref{Hompd}}}_{\mbox{ \ }}
                && \mbox{\bf  Hom-pre-Poisson alg  }\ar[d]_{\mbox{Prop. \ref{HomPoi}}}\\
\ar[rr] \mbox{\bf Adm. Hom-Poisson alg }\ar@<-1ex>[u]_{\mbox{R-B }}\ar[rr]^{\mbox{\quad\quad Prop. \ref{Yau}\quad\quad  }}
                && \mbox{\bf Hom-Poisson alg  }\ar@<-1ex>[u]_{\mbox{R-B}}\\
          \ar[rr] \mbox{\bf Adm. Poisson alg. }\ar@<-1ex>[u]_{ \mbox{Twist}}\ar[rr]^{\mbox{Commutator}}_{\mbox{Anti-Commutator}}
                && \mbox{\bf Poisson alg  }\ar@<-1ex>[u]_{\mbox{Twist}}}
}
 \end{split}
\end{equation}
\section*{Acknowledgement} The author would like to thank the referee for valuable comments and suggestions on this article.

\end{document}